\newcommand{\numberset}[1]{\ensuremath{\mathbb{#1}}}    
\newcommand{\C}{\numberset{C}}  
\newcommand{\N}{\numberset{N}}  
\newcommand{\R}{\numberset{R}}  
\newcommand{\Z}{\numberset{Z}}  
\newcommand{\PP}{\numberset{P}}  
\newcommand{\inner}[2]{ \langle {#1}, {#2} \rangle}
\newcommand{\caff}{\mathcal{A}\mathit{ff}}
\theoremstyle{definition}
\newtheorem{thm}{Theorem}[section]
\newtheorem{prop}[thm]{Proposition}
\newtheorem{lem}[thm]{Lemma}
\newtheorem{cor}[thm]{Corollary}
\newtheorem{rem}[thm]{Remark}
\newtheorem{ex}[thm]{Example}
\newtheorem{defi}[thm]{Definition}
\newtheorem{ass}[thm]{Assumption}
\newtheorem{con}[thm]{Conjecture}
\newtheorem*{mthm}{Main Theorem}
\DeclareMathOperator{\im}{Im}
\DeclareMathOperator{\Crit}{Crit} 
\DeclareMathOperator{\conv}{Conv} 
\DeclareMathOperator{\cone}{Cone}
\DeclareMathOperator{\id}{Id}
 \DeclareMathOperator{\Gl}{Gl}
\DeclareMathOperator{\Sl}{Sl}
\DeclareMathOperator{\spn}{span}
\DeclareMathOperator{\Log}{Log}
\DeclareMathOperator{\aff}{Aff}
\DeclareMathOperator{\bary}{Bar}
\DeclareMathOperator{\inter}{Int}
\newcommand{\mycomments}[1]{
           \ifthenelse{\boolean{mynotes}}
                      {#1}{}
           }
\begin{document}

\title{Conifold transitions via affine geometry and mirror symmetry}
\author{Ricardo Casta\~no-Bernard and Diego Matessi}

\begin{abstract}
Mirror symmetry of Calabi-Yau manifolds can be understood via a Legendre duality between a pair of certain affine manifolds with singularities called tropical manifolds. In this article, we study conifold transitions from the point of view of Gross and Siebert \cite{G-Siebert2003, G-Siebert2007, GrSi_re_aff_cx}.  We introduce the notions of tropical nodal singularity, tropical conifolds, tropical resolutions and smoothings. We interpret known global obstructions to the complex smoothing and symplectic small resolution of
compact nodal Calabi-Yaus in terms of certain tropical $2$-cycles containing the nodes in their associated tropical conifolds. We prove that the existence of such cycles implies the simultaneous vanishing of the obstruction to smoothing the original Calabi-Yau \emph{and} to resolving its mirror. We formulate a conjecture suggesting that the existence of these cycles should imply that the tropical conifold can be resolved and its mirror can be smoothed, thus showing that the mirror of the resolution is a smoothing. We partially prove the conjecture for certain configurations of nodes and for some interesting examples.
\end{abstract}

\maketitle


\section{Introduction}
A geometric transition between a pair of smooth varieties is the process of deforming the first variety to a singular one and then obtaining the second one by resolving the singularities. The first variety is called a smoothing and the second one a resolution. In  \cite{morrison-look}, Morrison conjectures that, in certain circumstances, mirror symmetry should map a pair of smooth Calabi-Yau manifolds, related by a geometric transition, to another pair, also related by a geometric transition but with the roles reversed, so that the mirror of a smoothing should be a resolution and vice-versa. This idea is supported by evidences and examples. Morrison also suggests that a new understanding of this phenomenon could come from the SYZ interpretation of mirror symmetry as a duality of special Lagrangian torus fibrations. Building on ideas of Hitchin, Gross, Gross-Wilson, Kontsevich-Soibelman and others on the SYZ conjecture \cite{Hitchin}, \cite{Gross_spLagEx}, \cite{G-Wilson2}, \cite{Kontsevich-Soibelman}, \cite{K-S-archim}, Gross and Siebert show that mirror pairs of Calabi-Yau manifolds can be constructed from a Legendre dual pair of affine manifolds with singularities and polyhedral decompositions, also called tropical manifolds \cite{G-Siebert2003, G-Siebert2007, GrSi_re_aff_cx}. In this article we consider the special case of conifold transitions. We introduce the notion of tropical conifold, i.e. of tropical manifold with nodes, and we show that the smoothing/resolution process also has a natural description in this context (this was first observed by Gross \cite{Gross_spLagEx} and Ruan \cite{Ruan}). Indeed the smoothing of a tropical conifold simultaneously induces a resolution of its mirror, but in general the process is obstructed. To study global obstructions we introduce the notion of tropical $2$-cycle in a tropical conifold. Our main result is the following:
\begin{mthm}
The existence of a tropical $2$-cycle containing the nodes in a tropical conifold implies the vanishing of the obstructions to the smoothing of the associated Calabi-Yau variety and to the resolution of its mirror.  
\end{mthm}
See Theorem \ref{good_rel_pos} for the precise statement. We formulate a conjecture claiming that the inverse also holds, i.e. that the vanishing of these obstructions can always be detected by tropical $2$-cycles. Moreover we expect that the existence of a resolution/smoothing of a set of nodes in the tropical conifold should be equivalent to some property expressible in terms tropical $2$-cycles containing the nodes. This would show that the smoothing and the resolution are themselves mirror pairs in the sense of Gross-Siebert. We partially prove the conjecture for some special configurations of nodes and for an interesting family of examples. 

\subsection{Conifold transitions.} A node is the $3$-fold singularity with local equation $xy-zw=0$. A small resolution of a node has a $\PP^1$ as its exceptional cycle, with normal bundle $\mathcal O_{\PP^1}(-1) \oplus \mathcal O_{\PP^1}(-1)$. The smoothing of a node (i.e. $xy-zw=\epsilon$) produces a Langrangian $3$-sphere as a vanishing cycle. A conifold transition is the geometric transition associated with a $3$-fold with nodal singularities (i.e. a ``conifold"). It was proved by Friedman and Tian \cite{friedman-trivial}, \cite{Tian} that a compact complex conifold can be smoothed to a complex manifold if and only if the exceptional cycles of a small resolution satisfy a ``good relation" in homology (see equation (\ref{cx_goodrel})). Similarly, on the symplectic side, it was shown by Smith, Thomas and Yau \cite{STY} that a ``symplectic conifold" has a symplectic (small) resolution, with symplectic exceptional cycles, if and only if the vanishing cycles of a smoothing satisfy a good relation. These two results are a manifestation of the idea that the mirror of a complex smoothing should be a symplectic resolution.

\subsection{SYZ conjecture.} Mirror symmetry is usually computed when the Calabi-Yau manifold is the generic fibre of a family $\psi: \chi \rightarrow \C$, where the special fibre $\chi_0 = \psi^{-1}(0)$ is highly degenerate (e.g. one requires that $\chi_0$ has maximally unipotent monodromy). The Strominger-Yau-Zaslow (SYZ) conjecture \cite{SYZ} claims that mirror Calabi-Yau pairs $X$ and $\check X$ should admit ``dual'' special Lagrangian fibrations, $f: X \rightarrow B$ and $\check f: \check X \rightarrow B$. This original idea has been revised by Gross-Wilson \cite{Gross_spLagEx}, \cite{G-Wilson2} and Kontsevich-Soibelman \cite{Kontsevich-Soibelman}, who claimed that special Lagrangian fibrations should exist only in some limiting sense as the fibre $\chi_s =\psi^{-1}(s)$ approaches the singular fibre $\chi_0$ (see also the survey paper \cite{Gross_SYZ_rev}). This ``limiting fibration" can be described in terms of a certain structure on the base $B$ of the fibration. Here $B$ is a real manifold and the structure on $B$ should contain information concerning the complex and symplectic structure of the Calabi-Yau manifold. Moreover, this data contains intrinsically a duality given by  a Legendre transform. The important fact is that the structure on $B$ should allow the ``reconstruction" of the original Calabi-Yau. This is known as the reconstruction problem. Therefore, finding the mirror of a given family $\psi: \chi \rightarrow \C$ of Calabi-Yau's becomes the process of constructing $B$, with its structure, applying the Legendre transform to obtain the dual base $\check B$, with dual structure, and then reconstructing the mirror family via some reconstruction theorem. For instance, in dimension $2$, Kontsevich and Soibelman \cite{K-S-archim}, construct a rigid analytic $K3$ from an affine structure on $S^2$ with $24$ punctures.

\subsection{Tropical manifolds and mirror symmetry.} In all dimensions, this program has been completed by Gross and Siebert in a sequence of papers \cite{G-Siebert2003}, \cite{G-Siebert2007}, \cite{GrSi_re_aff_cx}. On $B$ they consider the structure of an integral affine manifold with singularities and polyhedral decompositions. Roughly this means that $B$ is obtained by gluing a set of $n$-dimensional integral convex polytopes in $\R^n$ by identifying faces via integral affine transformations (this is the polyhedral decomposition, denoted $\mathcal P$). Then, at the vertices $v$ of $\mathcal P$ one defines a fan structure, which identifies the tangent wedges of the polytopes meeting at $v$ with the cones of a fan $\Sigma_v$ in $\R^n$. For a certain codimension $2$ closed subset $\Delta \subset B$, this structure determines an atlas on $B_0 = B - \Delta$ such that the transition maps are integral affine transformations. The set $\Delta$, called the discriminant locus, is the set of singularities of the affine structure. An additional crucial piece of data is a polarization, which consists of a so-called ``strictly convex multivalued piecewise linear function" $\phi$ on $B$. Such a $\phi$ is specified by the data of a strictly convex piecewise linear function $\phi_v$ defined on every fan $\Sigma_v$, plus compatibility conditions between $\phi_v$ and $\phi_w$ for vertices $v$ and $w$ belonging to a common face. All these data, which we denote by the triple $(B, \mathcal P, \phi)$, are also called a polarized tropical manifold. The ``discrete Legendre transform" associates to $(B, \mathcal P, \phi)$ another triple $(\check B, \check{\mathcal P}, \check \phi)$. Essentially, at a vertex $v$ of $B$, the fan $\Sigma_v$ and function $\phi_v$ provide an $n$-dimensional polytope $\check v$, by the standard construction in toric geometry. Two polytopes $\check v$ and $\check w$, associated to vertices $v$ and $w$ on a common edge of $\mathcal P$, can be glued together along a face using the compatibilities between the pairs $(\Sigma_v, \phi_v)$ and $(\Sigma_w, \phi_w)$. This gives $\check B$ and the polyhedral decomposition $\check {\mathcal P}$. The fan structure and function $\check \phi$ at the vertices of $\mathcal{\check P}$ come from the $n$-dimensional polytopes of $\mathcal P$ essentially using the inverse construction.  

In order to have satisfactory reconstruction theorems it is necessary to put further technical restrictions on $(B, \mathcal P, \phi)$. Gross and Siebert define such conditions and call them ``positivity and simplicity''. For convenience, we will say that a polarized tropical manifold is smooth if it satisfies these nice conditions.  In particular, in the $3$-dimensional case smoothness of $B$ amounts to the fact that $\Delta$ is a $3$-valent graph and the vertices can be of two types: ``positive" or ``negative", depending on the local monodromy of the affine structure. The Gross-Siebert reconstruction theorem \cite{GrSi_re_aff_cx}, ensures that given a smooth polarized tropical manifold $(B, \mathcal P, \phi)$,  it is possible to construct a toric degeneration $\psi: \chi \rightarrow \C$ of Calabi-Yau varieties, such that $B$ is the dual intersection complex of the singular fibre $\chi_0$. The mirror family $\check \psi: \check \chi \rightarrow \C$ is obtained by applying the reconstruction theorem to the Legendre dual $(\check B, \check {\mathcal P}, \check \phi)$. 

The integral affine structure on $B_0 = B-\Delta$ implies the existence of a local system $\Lambda^* \subset T^*B_0$, whose fibres $\Lambda_b \cong \Z^n$ are maximal lattices in $T^*_bB_0$. 
Then one can form the $n$-torus bundle $X_{B_0} = T^*B_0/ \Lambda^*$ over $B_0$. The standard symplectic form on $T^*B_0$ descends to $X_{B_0}$ and the projection $f_0: X_{B_0} \rightarrow B_0$ is a Lagrangian torus fibration. 
In \cite{CB-M}, we proved that if $B$ is a $3$-dimensional smooth tropical manifold then one can form a symplectic compactification of $X_{B_0}$. This is a symplectic manifold $X_B$, containing $X_{B_0}$ as a dense open subset, together with a Lagrangian fibration $f: X_B \rightarrow B$ which extends $f_0$. This is done by inserting suitable singular Lagrangian fibres over points of $\Delta$. Topologically the compactification $X_B$ is based on the one found by Gross in \cite{TMS}. It is expected that $X_B$ should be diffeomorphic to a smooth fibre $\chi_s$ of the family $\psi: \chi \rightarrow \C$ in the Gross-Siebert reconstruction theorem, whose dual intersection complex is $(\check B, \check {\mathcal P}, \check \phi)$. This result has been announced in \cite{Gross_Batirev}, Theorem 0.1. A complete proof for the quintic $3$-fold in $\PP^4$ is found in  \cite{TMS}. We also expect that $X_B$ should be symplectomorphic to $\chi_s$ with a suitable K\"ahler form, although there is no proof of this yet. 
\subsection{Summary of the results.} In dimension $3$, smoothness of $B$ ensures that the general fibre $\chi_s$ of $\psi: \chi \rightarrow \C$ is smooth. In this article we introduce the notion of (polarized) tropical conifold, in which the discriminant locus is allowed to have $4$-valent vertices. Such vertices, which we call (tropical) nodes, are of two types: negative and positive. Away from these nodes, a tropical conifold is a smooth tropical manifold. We believe that the Gross-Siebert reconstruction theorem can be extended also to tropical conifolds, but the general fibre $\chi_s$ should be a variety with nodes. This is hinted by the fact that the local conifold ${xy-wz=0}$ has a pair of torus fibrations which induce on the base $B$ the same structure as in a neighborhood of positive or negative nodes. In fact, we show in Corollary \ref{fibr_conifold} that if $B$ is a tropical conifold, then $X_{B_0}$ can be topologically compactified to a topological conifold $X_B$ (i.e. a singular topological manifold with nodal singularities). An interesting observation is that the Legendre transform of a positive node is the negative node. In particular we also have the mirror conifold $X_{\check B}$. This extends topological mirror symmetry of \cite{TMS} to conifolds. Then we give a local description of the smoothing and resolution of a node in a tropical conifold (see Figures \ref{transition} and \ref{transition2}). It turns out that the Legendre dual of a resolution is indeed a smoothing. At the topological level this was already observed by Gross \cite{Gross_spLagEx} and Ruan \cite{Ruan}, who also discusses a global example.   The interesting question is global: given a compact tropical conifold, can we simultaneously resolve or smooth its nodes? We give a precise procedure to do this.  It turns out that the smoothing of nodes in a tropical conifold simultaneously induces the resolution of the nodes in the mirror. What are the obstructions to the tropical resolution/smoothing?  For this purpose we define the notion of tropical $2$-cycle inside a tropical conifold. These objects resemble the usual notion of a tropical surface as defined for instance by Mikhalkin in \cite{mikhalk_lectures}.  A tropical $2$-cycle is given by a space $S$ and an embedding $j: S \rightarrow B$ with some additional structure. The space $S$ has various types of interior and boundary points. For instance at generic points, $S$ is locally Euclidean, at the codimension $1$ points $S$ is modeled on the tropical line times an interval and at codimension $2$ points $S$ is modeled on the tropical plane (see Figure \ref{tdomain}) and so on. In Theorem \ref{good_rel_pos} we prove that if $j(S)$ contains tropical nodes, then both the vanishing cycles associated to the nodes in $X_B$ and the exceptional curves associated to the nodes in $X_{\check B}$ satisfy a good relation. The idea is that tropical $2$-cycles can be used to construct either $4$-dimensional objects in $X_B$ or $3$-dimensional ones in $X_{\check B}$ (see also Chapter 6 of \cite{diri_branes}, where the local duality between $A$-branes and $B$-branes is explained). 
Thus obstructions vanish on both sides of mirror symmetry. The results of Friedman, Tian and Smith-Thomas-Yau  then lead us to Conjecture \ref{trop_cycle_conj}. It states that any good relation among the vanishing cycles of a set of nodes in $X_B$ is a linear combination of good relations coming from tropical $2$-cycles in $B$. Moreover there should exist some property of these tropical $2$-cycles which is equivalent to the fact that $B$ can be tropically resolved. As a partial confirmation of this conjecture, we prove that the nodes contained in some special configurations of tropical $2$-cycles can always be tropically resolved (Theorems \ref{resolve_nodes_1}, \ref{resolve_nodes_2}, \ref{resolve_-ve_nodes} and Corollary \ref{resolve_nodes_cor}). 

Finally we apply these results to specific examples. We consider the case of Schoen's Calabi-Yau \cite{Schoen}, which is a fibred product of two rational elliptic surfaces. A corresponding tropical manifold has been described by Gross in 
\cite{Gross_Batirev}. It is possible to modify the example in many ways so that we obtain a tropical conifold with various nodes. We show how these nodes can be resolved/smoothed and thus obtain new tropical manifolds. The interesting fact is that this procedure automatically produces the mirror families via discrete Legendre transform and the reconstruction theorems.  For this class of examples we also partially prove Conjecture \ref{trop_cycle_conj}.

\subsection*{Notations.} We denote the convex hull of a set of points $q_1, \ldots, q_r$ in $\R^n$ by $\conv (q_1, \ldots, q_n)$. 
Given a set of vectors $v_1, \ldots, v_r \in \R^n$ the cone spanned by these vectors is the set 
\[ \cone(v_1, \ldots, v_r) = \left \{ \sum_{j=1}^{r} t_j v_j \ | \ t_j \geq 0, \ j=1, \ldots,r  \right \}.  \]

\section{Affine manifolds with polyhedral decompositions}

We give an informal introduction to affine manifolds with singularities and polyhedral decompositions. We refer to \cite{G-Siebert2003} for precise definitions and proofs.

\subsection{Affine manifolds with singularities.} Let $M \cong \Z^n$ be a lattice and define $M_\R=M\otimes_\Z \R$ and let
\[
\aff (M)=M\rtimes \Gl (\Z,n)
\]
be the group of integral affine transformations of $M_\R$. If $M$ and $M'$ are two lattices, then $\aff (M,M')$ is the $\Z$-module of integral affine maps between $M_{\R}$ and $M'_{\R}$. Recall that an integral affine structure $\mathscr A$
on an $n$-manifold $B$ is given by an open cover $\{U_i\}$ and an atlas of charts $\phi_i: U_i\rightarrow M_\R$ whose transition maps $\phi_j\circ\phi_i^{-1}$ are in $\aff (M)$. An integral affine manifold is a manifold $B$ with an integral affine structure $\mathscr A$. A continuous map $f: B \rightarrow B'$ between two integral affine manifolds is integral affine if, locally, $f$ is given by elements of $\aff (M,M')$. 

An {\it affine manifold with singularities} is a triple, $(B,\Delta,\mathscr A)$, where the $B$ is an $n$-manifold, $\Delta \subseteq B$ is a closed subset such that $B_0=B-\Delta$ is dense in $B$ and $\mathscr A$ is an integral affine structure on $B_0$. The set $\Delta$ is called the {\it discriminant locus}. A continuous map $f: B \rightarrow B'$ of integral affine manifolds with singularities is integral affine if $f^{-1}(B_0') \cap B_0$ is dense in $B$ and $f_{|f^{-1}(B_0') \cap B_0}: f^{-1}(B_0') \cap B_0 \rightarrow B_0'$ is integral affine. Furthermore $f$ is an isomorphism of integral affine manifolds with singularities if $f:(B, \Delta) \rightarrow (B', \Delta')$ is a homeomorphism of pairs. 
\subsection{Parallel transport and monodromy.} Given an affine manifold $B$, let $(U,\phi)\in\mathscr A$ be an affine chart with coordinates $u_1, \ldots, u_n$. Then the tangent bundle $TB$ (resp. cotangent bundle $T^{\ast}B$) has a flat connection $\nabla$ defined by
\[ \nabla \partial_{u_{j}} = 0 \ \ \ (\text{resp.} \ \ \nabla du_j = 0), \]
for all $j=1, \ldots, n$ and all charts $(U,\phi)\in\mathscr A$.  Then parallel transport along loops based at $b \in B$ gives the monodromy representation $\tilde \rho: \pi_{1}(B,p) \rightarrow \Gl(T_bB)$. Gross-Siebert also introduce the notion of holonomy representation, which is denoted $\rho$ and has values in $\aff(T_bB)$, $\tilde \rho$ coincides with the linear part of $\rho$. In the case of an affine manifold with singularities $(B,\Delta,\mathscr A)$, the monodromy representation is $\tilde \rho: \pi_1(B_0,p) \rightarrow \Gl(T_bB_0)$. 

Integrality implies the existence of a maximal integral lattice $\Lambda \subset TB_0$ (resp. $\Lambda^* \subset T^*B_0$) defined by
\begin{equation} \label{tangent_lattice}
 \Lambda|_{U} = \spn_{\Z} \inner{ \partial_{u_1}, \ldots}{\partial_{u_n}}  \ \ (\text{resp.} \ \ \Lambda^*|_{U} = \spn_{\Z} \inner{ du_1, \ldots}{du_n}). 
\end{equation}
We can therefore assume that $\tilde \rho$ has values in $\Gl(\Z, n)$. 
\subsection{Polyhedral decompositions.} \label{polyhedral_dec} Rather than recalling here the precise definition of an {\it integral affine manifold with singularities and polyhedral decompositions} (i.e. Definition 1.22, op. cit.), it is better to recall the standard procedure to construct them (see Construction 1.26, op. cit.). We start with a finite collection $\mathcal P'$ of $n$-dimensional integral convex polytopes in $M_\R$. The manifold $B$ is formed by gluing together the polytopes of $\mathcal P'$ via integral affine identifications of their proper faces. Then $B$ has a cell decomposition whose cells are the images of faces of the polytopes of $\mathcal P'$. Denote by $\mathcal P$ this set of cells. We assume that $B$ is a compact manifold without boundary. We now construct the integral affine atlas $\mathcal A$ on $B$. 
First of all, the interior of each maximal cell of $\mathcal P$ can be regarded as the domain of an integral affine chart, since it comes from the interior of a polytope in $M_{\R}$. To define a full atlas we need charts around points belonging to lower dimensional cells.
In fact this will be possible only after removing from $B$ a set $\Delta'$ which we now define. 
Let $\bary (\mathcal P)$ be the first barycentric subdivision of $\mathcal P$. Then define $\Delta'$ to be the union of all simplices of $\bary (\mathcal P)$ not containing a vertex of $\mathcal P$ (i.e. $0$-dimensional cells) or the barycenter of a maximal cell.  For a vertex $v \in \mathcal P$, let $W_v$ be the union of the interiors of all simplices of $\bary (\mathcal P)$ containing $v$. Then $W_v$ is an open neighborhood of $v$ and 
\[ \{ W_v \, | \, v \ \text{is a vertex of} \ \mathcal P \} \cup \{ \inter (\sigma) \,| \, \sigma \in \mathcal P_{\max} \} \]
forms a covering of $B - \Delta'$. A chart on the open set $W_v$ is given by a {\it fan structure} at the vertex $v$ (see Construction 1.26, op. cit.). This construction gives an integral affine atlas on $B - \Delta'$. In many cases the set $\Delta'$ is too crude and the affine structure can be extended to a larger set than $B - \Delta'$. This can be done as follows. Notice that $\Delta'$ is a union of codimension $2$ simplices. Then let $\Delta$ be the union of those simplices around which local monodromy is not trivial. In Proposition 1.27 of op. cit. it is proved that the affine structure on $B- \Delta'$ can be extended to $B - \Delta$.

Gross and Siebert also introduce the crucial notion of {\it toric} polyhedral decomposition. Essentially this condition establishes certain compatibilities between fans $\Sigma_v$ and $\Sigma_w$ at vertices $v$ and $w$ lying on some common cell. We will come back to this in the next two paragraphs. 

\subsection{Local properties of monodromy.} \label{mon_properties} The monodromy representation of affine manifolds with polyhedral decompositions has some useful distinguished properties, which we now describe. First of all notice that $\Delta$ is contained in the codimension $1$ skeleton. Let $\tau$ be a cell of $\mathcal P$ of codimension at least $1$, then it is shown in op. cit. Proposition 1.29 that the tangent space to $\tau$ is monodromy invariant with respect to the local monodromy near $\tau$. More precisely, there exists a neighborhood $U_{\tau}$ of $\inter(\tau)$ such that, if $b \in \tau - \Delta$ and $\gamma \in \pi_1(U_{\tau}- \Delta, b)$, then $\tilde \rho(\gamma)(w) = w$ for every $w$ tangent to $\tau$ in $b$. Moreover (see op. cit. Proposition 1.32) the polyhedral subdivision is toric if and only if for every $\tau$ there exists a neighborhood $U_{\tau}$ of $\inter(\tau)$ such that, if $b \in \tau - \Delta$ and $\gamma \in \pi_1(U_{\tau}- \Delta, b)$, then $\tilde \rho(\gamma)(w) - w$ is tangent to $\tau$ for every $w \in T_bB_0$.

\subsection{Quotient fans.} \label{qfan} If the polyhedral decomposition is toric (see above), then to every cell $\tau \in \mathcal P$ one can associate a complete fan $\Sigma_{\tau}$, called the quotient fan of $\tau$, whose dimension is equal to the codimension of $\tau$. It is defined as follows. Let $b \in \inter(\tau) - \Delta$, then to every $\sigma$ such that $\tau \subset \sigma$, one can associate the tangent wedge of $\sigma$ at $b$. This can be viewed as a convex rational polyhedral cone inside $T_bB_0$ (with lattice structure given by $\Lambda$). The union of all such cones forms a complete fan in $T_bB_0$, which is the pull back of a complete fan in the quotient $T_bB_0 / T_b \tau$. Let $\Sigma_{\tau}$ be such a fan. The toric condition ensures that the quotient spaces $T_bB_0 / T_b \tau$ and the fan $\Sigma_{\tau}$ are independent of $b \in \tau - \Delta$, in fact they can be all identified via parallel transport along paths contained in a suitably small neighborhood $U_{\tau}$ of $\inter(\tau)$. The local properties of monodromy, assuming the toric condition, imply that this identification is independent of the chosen path.

\subsection{Examples.} In the following examples $B$ will be allowed to have boundary or even to be constructed using unbounded polytopes. The construction above can be easily adapted to these cases. 

\begin{ex} {\bf (The focus-focus singularity)}  \label{ff_affine} Here the dimension is $n=2$. The set $\mathcal P'$ is given by two polytopes: a standard simplex and a square $[0,1] \times [0,1]$. Glue them along one edge to form $B$ (see Figure \ref{affine_ff}). Let $e$ be the common edge, and let $v_1$ and $v_2$ be the vertices of $e$. The discriminant locus $\Delta$ consists of the barycenter of $e$.  Consider the fan in $\R^2$ whose $2$-dimensional cones are two adjacent quadrants, i.e. $\cone(e_1, e_2)$ and $\cone(e_1, - e_2)$, where $\{ e_1,e_2 \}$ is the standard basis of $\R^2$. Then the fan structure at $v_j$, $j=1,2$, identifies the tangent wedges of the two polytopes with these two cones, in such a way that the primitive tangent vector to $e$ at $v_j$ is mapped to $e_1$ (see Figure \ref{affine_ff}). 
\begin{figure}[!ht] 
\begin{center}
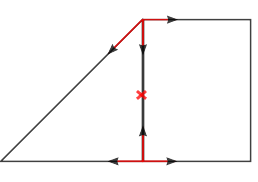
\caption{} \label{affine_ff}
\end{center}
\end{figure}
Consider a loop $\gamma$ which starts at $v_1$, goes into the square, passes through $v_2$ and comes back to $v_1$ while passing inside the triangle. One can easily calculate that $\tilde \rho(\gamma)$, computed with respect to the basis $\{e_1, e_2 \}$, as depicted in Figure \ref{affine_ff}, is the matrix 
\[
\left(
\begin{array}{cc}
  1&  1  \\
 0 &  1   
\end{array}
\right)
\]
The singular point $\Delta$ in this example is called the focus-focus singularity.
\end{ex}

\begin{ex}  {\bf (Generic singularity)} \label{generic_aff}
This a $3$-dimensional example and it is just the product of the previous example by $[0,1]$. Here $\Delta$ consists of a segment. 
\end{ex}

\begin{ex} {\bf (The negative vertex)} \label{-v}
\begin{figure}[!ht] 
\begin{center}
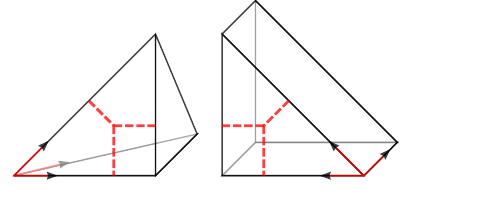
\caption{} \label{-vertex}
\end{center}
\end{figure}
Here $n=3$. Let $P^n$ be the standard simplex in $\R^n$. The set $\mathcal P'$ consists of two polytopes: $P^3$ and $P^2 \times P^1$ which we glue by identifying the triangular face $P^2 \times \{ 0 \}$ with a face of $P^3$. In Figure \ref{-vertex} we have labeled the vertices of these two faces by $v_1, v_2, v_3$ and the identification is done by matching the vertices with the same labeling. 
Now consider the fan in $\R^3$ whose cones are two adjacent octants (i.e. $\cone(e_1, e_2, e_3)$ and $\cone(e_1, e_2, -e_3)$, where $\{e_1, e_2, e_3 \}$ is the standard basis of $\R^3$). At every vertex $v_j$ identify the tangent wedges of the two polytopes with these two cones, in such a way that the tangent wedge to the common face is mapped to $\cone(e_1, e_2)$. There is more than one way to do this (since $\cone(e_1, e_2)$ has non trivial automorphisms), but any choice is a good chart of the affine structure. If one fixes an orientation then a choice can be made so that the chart is oriented. 
The discriminant locus $\Delta$ is the ``Y" shaped figure depicted in (red) dashed lines in Figure \ref{-vertex}. Now let $\gamma_j$ be the path going from $v_3$ to $v_j$ by passing into $P^3$ and then coming back to $v_3$ by passing into $P^2 \times P^1$. It can be easily shown that $\tilde \rho(\gamma_1)$ and $\tilde \rho(\gamma_2)$ are given respectively by the 
matrices
\[
\left(
\begin{array}{ccc}
  1&  0& 1  \\
  0& 1  &  0 \\
  0& 0 &   1
\end{array}
\right), \ \ \ 
\left(
\begin{array}{ccc}
 1& 0  &  0 \\
 0&  1 & 1  \\
 0&  0 &   1
\end{array}
\right)
\]
The vertex of $\Delta$ in this example is called the {\it negative vertex}.
\end{ex}

\begin{ex}  {\bf (The positive vertex)} \label{+v}
\begin{figure}[!ht] 
\begin{center}
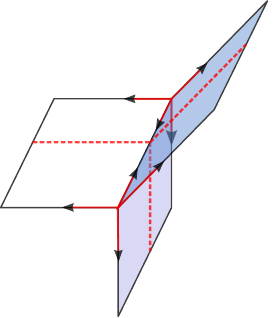
\caption{} \label{+vertex}
\end{center}
\end{figure}
In this case $B = \R^2 \times [0,1]$ with polyhedral decomposition given by the following unbounded polytopes (see Figure \ref{+vertex}):
\[ Q_1 = \{x \geq \max \{y,0 \}, z \in [0,1] \}, \ \ \ Q_2 = \{y \geq \max \{x,0 \}, z \in [0,1] \}, \]
\[ Q_3 = \{x \leq 0, y \leq 0, z \in [0,1] \}.  \]
Let $\Sigma_0$ be the fan whose maximal cones are
\[ \cone(e_1, e_3, -e_1-e_2-e_3), \ \ \cone(e_1,e_2, -e_1-e_2-e_3), \ \ \cone(e_1, e_2, e_3). \]
Then the fan structure at $p_0$ identifies the tangent wedges of $Q_1, Q_2$ and  $Q_3$ at $p_0$ with the first, second and third cone respectively. 
Now let $\Sigma_1$ be the fan whose maximal cones are
\[ \cone(e_1, e_3, -e_2-e_3), \ \ \cone(e_1,e_2, -e_2-e_3), \ \ \cone(e_1, e_2, e_3). \]
The fan structure at $p_1$ identifies the tangent wedges of $Q_1, Q_2, Q_3$ at $p_1$ with the first, second and third cone respectively. Now let $\gamma_j$, $j=1,2$, be the loop which starts at $p_0$, goes to $p_1$ by passing inside $Q_3$ and then comes back to 
$p_0$ by passing inside $Q_j$. Then we have that $\tilde \rho(\gamma_1)$ and $\tilde \rho(\gamma_2)$ are given respectively by the following matrices
\[
\left(
\begin{array}{ccc}
  1&  1& 0  \\
  0& 1  &  0 \\
  0& 0 &   1
\end{array}
\right), \ \ \ 
\left(
\begin{array}{ccc}
 1& 0  &  1 \\
 0&  1 & 0  \\
 0&  0 &   1
\end{array}
\right).
\]
The vertex of $\Delta$ in this example is called the {\it positive vertex}.
\end{ex}

\subsection{MPL-functions} A multi-valued piecewise linear (MPL) function on an affine manifold with singularities $B$ and polyhedral subdivision $\mathcal P$ generalizes the notion of piecewise linear function on a fan $\Sigma$ in toric geometry. Let $U \subset B$ be an open subset. A continuous function $f:U\rightarrow \R$ is said to be {\it (integral) affine} if it is (integral) affine when restricted to $U \cap B_0$. The sheaf of integral affine functions (or just affine) is denoted by $ \caff (B, \Z)$ (resp. $\caff_{\R}(B, \R)$). Notice, for example, that an affine function defined in a neighborhood of the singularity in Example \ref{ff_affine} must be constant along the edge $e$ which contains it. An (integral) {\it piecewise linear} (PL) function on $U$ is a continuous function $f: U \rightarrow \R$ which is (integral) affine when restricted to $U \cap \inter(\sigma)$ for every maximal cell $\sigma \in \mathcal P$ and satisfies the following property: for any $y \in U$, $y \in \inter(\sigma)$ for some $\sigma \in \mathcal P$, there exists a neighborhood $V$ of $y$ and an (integral) affine function $f$ on $V$ such that $\phi-f$ is zero on $V \cap \inter(\sigma)$. Notice that this latter property implies that a PL-function on a neighborhood of the singularity in Example \ref{ff_affine} is constant when restricted to the edge $e$. The sheaf of integral (or just affine) PL-functions is denoted by $	\mathcal{PL}_{\mathcal P}(B, \Z)$ (resp. $\mathcal{PL}_{\mathcal P, \R}(B, \R)$).

When $\mathcal P$ is a toric polyhedral subdivision, then a PL-function satisfies the following property. Given $\sigma \in \mathcal P$ and $y \in \inter(\sigma)$, then, in a neighborhood $U$ of $y$, there is an affine function $f$ such that $\phi-f$ is zero on $U \cap \inter(\sigma)$. This implies that $\phi-f$ descends to a PL-function (in the sense of toric geometry) on the quotient fan $\Sigma_{\sigma}$ defined in \S \ref{qfan}. We denote this function by $\phi_{\sigma}$ and we call it the {\it quotient function}. 

The sheaf $\mathcal{MPL}_{\mathcal P}$ of integral {\it MPL-functions} is defined by the following exact sequence of sheaves
\[ 0 \longrightarrow \caff(B, \Z) \longrightarrow \mathcal{PL}_{\mathcal P}(B, \Z) \longrightarrow \mathcal{MPL}_{\mathcal P} \longrightarrow 0. \]
Given a toric polyhedral subdivision $\mathcal P$ on $B$, an MPL-function $\phi$ on $B$ is said to be {\it (strictly) convex} with respect to $\mathcal P$ if $\phi_{\tau}$ is a (strictly) convex piecewise linear function on the fan $\Sigma_{\tau}$ for every $\tau \in \mathcal P$.  We can now give the following 

\begin{defi} A {\it (polarized) tropical manifold} is a triple $(B, \mathcal P, \phi)$ where $B$ is an integral affine manifold with singularities, $\mathcal P$ a toric polyhedral decomposition and $\phi$ a strictly convex MPL-function with respect to $\mathcal P$ (the polarization). 
\end{defi}

It is worth to point out that this notion of tropical manifold differs from other notions appearing elsewhere in the literature, such as Mikhalkin's tropical varieties in \cite{mikhalk_lectures}. Gross-Siebert's tropical manifolds can be seen as ambient spaces where Mikhalkin's tropical varieties can be embedded. 

\subsection{The discrete Legendre transform.}  Given a tropical manifold $(B, \mathcal P, \phi)$, the discrete Legendre transform produces a second tropical manifold $(\check B, \check{\mathcal P}, \check{\phi})$. Topologically the pair $(\check B, \check \Delta)$ is homeomorphic to the pair $(B, \Delta)$ and the decomposition $\check{\mathcal P}$ is the standard dual cell decomposition (in the sense of topological cell decompositions). What changes is the affine structure.

Given a polytope $\sigma \in \mathcal P$, the MPL-function $\phi$ gives a strictly convex piecewise linear function $\phi_{\sigma}$ on the fan $\Sigma_{\sigma}$. Then we let $\check \sigma$ be the standard Newton polytope associated to the pair $(\Sigma_{\sigma}, \phi_{\sigma})$. Its dimension is equal to the codimension of $\sigma$. Recall that there is an inclusion reversing correspondence between $k$-dimensional cones of $\Sigma_{\sigma}$ and faces of $\check \sigma$ of codimension $k$. Then we let $\check{\mathcal P}' = \{ \check v \, | \, v \ \text{a vertex of} \ \mathcal P \}$. The manifold $\check B$ is obtained from $\check{\mathcal P}'$ as follows.
Suppose that $\tau$ is an edge of $\mathcal P$ having $v$ and $w$ as vertices.  Then, the properties of $\phi$ ensure that $\check \tau$ is isomorphic to an $(n-1)$-dimensional face of both $\check v$ and $\check w$. Thus $\check v$ and $\check w$ can be glued along these faces using the isomorphism with $\check \tau$. It can be shown that the space produced from $\check{\mathcal P}'$ via these gluings is a manifold $\check B$ homeomorphic to $B$, with induced cell decomposition $\check{\mathcal  P}$. 

It remains to define a fan structure at all vertices of $\check{\mathcal P}$. Notice that a vertex of $\check{\mathcal P}$ is the dual of a maximal polytope $\sigma \in \mathcal P$, thus we denote it by $\check \sigma$. The fan $\Sigma_{\check \sigma}$ at $\check \sigma$ is given by the normal fan of $\sigma$. One can show that there is a well defined chart $\iota_{\check \sigma}: W_{\check \sigma} \rightarrow |\Sigma_{\check \sigma}|$. This construction also gives a naturally defined MPL-function $\check \phi$. Locally this is given by the standard strictly convex PL-function $\check \phi_{\check{\sigma}}$ defined on the normal fan of a convex lattice polytope. Thus we have the Legendre dual polarized tropical manifold $(\check B, \check{\mathcal P}, \check \phi)$. 

As an example, it can be easily shown that the negative vertex (Example \ref{-v}) and the positive one (Example \ref{+v}) are related to each other via a discrete Legendre transform with respect to suitably chosen polarizations. 

\subsection{The Gross-Siebert reconstruction theorem.} \label{GSreconstruction} Gross and Siebert consider tropical manifolds which satisfy a further set of technical conditions which they call ``positive and simple".  
To avoid confusion with other uses of the word ``positive" in this paper, we will say that a (polarized) tropical manifold is {\it smooth} if it is ``positive and simple" in the Gross-Siebert sense. 
In dimension $n=2$ or $3$, smoothness amounts to the following. If $n=2$, $\Delta$ consists of a finite set of points and every point of $\Delta$ has a neighborhood which is integral affine isomorphic to a neighborhood of the focus-focus singularity in Example \ref{ff_affine}. If $n=3$, then $\Delta$ is a trivalent graph such that: $(a)$ every point in the interior of an edge of $\Delta$ has a neighborhood which is integral affine isomorphic to a neighborhood of a singular point in Example \ref{generic_aff} (b) every vertex of $\Delta$ has a neighborhood which is integral affine isomorphic to a neighborhood of the ``positive vertex" in Example \ref{+v} or of the ``negative vertex" in Example \ref{-v}. In this definition we should also allow $\Delta$ to be curved. In fact in Examples \ref{generic_aff},  \ref{+v} and \ref{-v} we could take $\Delta$ to be made of curved lines lying inside the same $2$-dimensional face and we would still have a well defined affine structure on $B - \Delta$. For the rest of this paper we restrict to dimensions $n=2$ or $3$. 

In \S 4 of \cite{G-Siebert2003}, Gross and Siebert consider a toric degeneration $\psi: \chi \rightarrow \C$ of varieties, with a relatively ample line bundle $\mathcal L$ on $\chi$ and they associate to it its dual intersection complex which has the structure of a tropical manifold $(B, \mathcal P, \phi)$. A toric degeneration has the property that the central fibre $\chi_0 = \psi^{-1}(0)$ is obtained from a disjoint union of toric varieties by identifying pairs of irreducible toric Weil divisors. This information is encoded in  $(B, \mathcal P, \phi)$, in fact to every vertex $v \in \mathcal P$ we associate the toric variety $S_v$ given by the fan $\Sigma_v$. Now to every edge $\tau$, connecting vertices $v$ and $w$, the intersection between $S_v$ and $S_w$ is the toric divisor $D_{\tau}$ given by the fan $\Sigma_{\tau}$. The polarization $\phi$ determines $\mathcal L_{|\chi_0}$. In \cite{GrSi_re_aff_cx} they prove the important reconstruction theorem
\begin{thm} \label{gs_recon} (Gross-Siebert) Every compact and smooth polarized tropical manifold arises as the dual intersection complex of a toric degeneration. 
\end{thm}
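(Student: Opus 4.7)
The plan is to first construct the central fibre $\chi_0$ directly from the combinatorial data $(B,\mathcal P,\phi)$, equip it with a natural log structure, and then solve a sequence of order-by-order log deformation problems whose compatible solutions assemble into a formal family over $\C[[t]]$ that can be algebraised.

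First I would build $\chi_0$ by associating to each vertex $v\in\mathcal P$ the toric variety $S_v$ defined by the fan $\Sigma_v$, and to each edge $\tau$ with endpoints $v,w$ the toric divisor $D_\tau\subset S_v\cap S_w$ defined by the quotient fan $\Sigma_\tau$ of \S\ref{qfan}. The compatibility built into the fan structure at adjacent vertices ensures that $S_v$ and $S_w$ glue along $D_\tau$; iterating over all cells of $\mathcal P$ produces $\chi_0$ as a reducible variety whose strata and incidences mirror $\mathcal P$. The strictly convex MPL-function $\phi$ assembles the toric bundles $\mathcal O_{S_v}(\phi_v)$ into a relatively ample line bundle $\mathcal L_0$ on $\chi_0$.

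Next, I would endow $\chi_0$ with a log structure over the standard log point $\operatorname{Spec}(\C^{\dagger})$, making $\chi_0^{\dagger}\to \operatorname{Spec}(\C^{\dagger})$ log smooth away from a codimension-two locus $Z\subset\chi_0$ which, under the stratification-cell dictionary, corresponds to $\Delta\subset B$. The local pictures at interior points of edges of $\Delta$ and at positive/negative vertices (Examples~\ref{generic_aff}, \ref{+v}, \ref{-v}) prescribe the log structure across $Z$; smoothness (positivity and simplicity) is precisely what ensures that these local log models glue into a single global ghost sheaf on $\chi_0$ whose stalks encode the local monodromy of the affine structure on $B$.

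The main step is the log deformation theory: construct, inductively in $k$, a flat family $\chi_k^{\dagger}\to\operatorname{Spec}(\C[t]/(t^{k+1}))^{\dagger}$ extending $\chi_0^{\dagger}$, together with a lift $\mathcal L_k$ of $\mathcal L_0$, in a mutually compatible way. The combinatorial device controlling this is a \emph{structure} (or scattering diagram) $\mathscr S_k$ on $B$: a locally finite collection of codimension-one rational polyhedral walls decorated with automorphisms of the local deformation rings. Given $\mathscr S_k$, one assembles $\chi_k^{\dagger}$ by gluing standard local models built on the open sets $W_v$ and on $\inter(\sigma)$ for maximal $\sigma\in\mathcal P$, with transition maps on overlaps defined by the ordered product of wall automorphisms crossed along a path; strict convexity of $\phi$ produces the lift $\mathcal L_k$ at each order.

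The main obstacle, and the heart of \cite{GrSi_re_aff_cx}, is the inductive construction of $\mathscr S_{k+1}$ from $\mathscr S_k$. The obstruction to extending a consistent order-$k$ structure to order $k+1$ is concentrated at the codimension-two joints of $\mathscr S_k$ (interior points of edges of $\Delta$ and positive/negative vertices) and is measured by the failure of a monodromy commutator of wall automorphisms around each joint to be the identity modulo $t^{k+2}$. A scattering lemma in the spirit of Kontsevich--Soibelman, globalised to $B$, lets one introduce finitely many new walls emanating from each joint whose combined contribution cancels this obstruction; smoothness is exactly what makes the local solvability at every joint possible in a uniform way. Iterating yields a consistent structure $\mathscr S=\varprojlim\mathscr S_k$, hence a formal log family over $\C[[t]]^{\dagger}$. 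Relative ampleness of the compatible system $\{\mathcal L_k\}$ permits algebraisation and base change to a small disc in $\C$, producing the toric degeneration $\psi:\chi\to\C$ with relatively ample $\mathcal L$; unwinding the construction recovers $(B,\mathcal P,\phi)$ as its dual intersection complex.
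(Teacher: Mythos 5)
This theorem is not proved in the paper at all: it is Gross and Siebert's reconstruction theorem, quoted from \cite{GrSi_re_aff_cx}, and the surrounding text (\S\ref{GSreconstruction}) only explains how the dual intersection complex encodes the central fibre. So there is no proof of ours to compare yours against; what you have written is an outline of the strategy of the cited work itself.

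As such an outline, your sketch is faithful: the construction of $\chi_0$ from the fans $\Sigma_v$ glued along the divisors $D_\tau$, the log structure over the standard log point with log-singular locus corresponding to $\Delta$, the order-by-order gluing of standard thickenings governed by a structure of walls, the localisation of the obstruction at codimension-two joints, the Kontsevich--Soibelman-type scattering to restore consistency, and the algebraisation via the relatively ample $\mathcal L$ are all the right ingredients in the right order. But be aware that as a \emph{proof} it has a gap exactly where you flag ``the heart of \cite{GrSi_re_aff_cx}'': the existence of a consistent structure to all orders is asserted, not established. The genuinely hard content --- the precise definition of structures (including the distinction between slabs carrying log data and walls carrying only higher-order corrections), the normalization condition needed to make the order-by-order scattering converge to something well defined, the separate consistency checks in codimensions $0$, $1$ and $2$, and the role of positivity and simplicity in guaranteeing local solvability at joints lying on $\Delta$ --- is precisely what occupies most of Gross--Siebert's paper and cannot be compressed into the paragraph you devote to it. If your aim is a self-contained proof, that inductive step must be carried out; if your aim is to explain why the theorem is plausible and how it is used here, your summary is accurate.
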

In dimension $n=3$ the generic fibre of the toric degeneration constructed in the above theorem is a smooth manifold and if $B$ is a $3$-sphere then it is also Calabi-Yau. If we consider the discrete Legendre transform $(\check B, \check{\mathcal P}, \check{\phi})$, then we can reconstruct the toric degeneration $\check \psi: \check{\chi} \rightarrow \C$. Gross and Siebert claim that this family is mirror symmetric to the family $\psi: \chi \rightarrow \C$ and provide many evidences of this. For instance Gross, in \cite{Gross_Batirev}, shows that Batyrev-Borisov mirror pairs \cite{BB} of Calabi-Yau manifolds arise in this way (see also the articles \cite{Haase-Zharkov}, \cite{Haase-ZharkovII} and \cite{Haase-ZharkovIII}). 

\section{Lagrangian fibrations}

Now consider an integral affine manifold with singularities $B$ with discriminant locus $\Delta$ and recall the definition (\ref{tangent_lattice}) of the lattice $\Lambda^* \subset T^*B_0$. Then we can define the $2n$-dimensional manifold 
\[ X_{B_0}= T^{\ast}B_0 / \Lambda^*, \]
which, together with the projection $f_0: X_{B_0}\rightarrow B_0$, forms
a $T^n$ fibre bundle. The standard symplectic form on $T^{\ast}B_0$ descends to $X_{B_0}$ and the fibres of $f_0$ are Lagrangian. Clearly the monodromy representation $\tilde \rho$ associated to the flat connection on $T^*B_0$ is also the monodromy of the local system $\Lambda^*$. A ``symplectic compactification" of $X_{B_0}$ is a symplectic manifold $X_B$, together with a surjective Lagrangian fibration $f: X_B \rightarrow B$ such that we have the following commutative diagram
\begin{equation} \label{compactify}
\begin{array}{ccc}
X_{B_0} & \hookrightarrow & X_B \\ 
\downarrow & \  & \downarrow \\ 
B_0& \hookrightarrow & B
\end{array}
\end{equation}
where the vertical arrows are the fibrations and the upper arrow is an open symplectic embedding. We will restrict our attention to the $3$-dimensional case $n=3$. In this case, in \cite{CB-M} we proved that $X_B$ and $f$ can be constructed under the assumption that $B$ is smooth (in the sense of \S \ref{GSreconstruction}). The precise statement of the result in \cite{CB-M} is slightly more delicate due to the fact that near negative vertices the discriminant locus $\Delta$ has to be perturbed so that it has a small codimension $1$ part. We will explain more about this later. Our symplectic construction of $X_B$ is based on the topological construction carried out by Gross in \cite{TMS}. It is expected that $X_B$ should be symplectomorphic to a generic fibre of the reconstructed toric degeneration of Theorem \ref{gs_recon} with some K\"ahler form, where the mirror $\check B$ is the dual intersection complex. In \cite{TMS}, in the case of the quintic $3$-fold in $\PP^4$, Gross proved that $X_B$ is diffeomorphic to a generic quintic and $X_{\check B}$ is diffeomorphic to its mirror (see also \cite{Gross_Batirev}, Theorem 0.1).

The fibration $f$ will have three types of singular fibres: \textit{generic-singular fibres} over edges of $\Delta$; \textit{positive fibres} and \textit{negative fibres} respectively over positive and negative vertices of $\Delta$. Let $U \subseteq B$ be a small open neighborhood, homeomorphic to a $3$-ball, of either an edge, a positive or negative vertex. The idea is to find standard local models of fibrations $f_U: X_U \rightarrow U$, such that if we let $U_0 = U \cap B_0$ and $X_{U_0}= f_U^{-1}(U_0)$, then $f_U: X_{U_0} \rightarrow U_0$ has the structure of a $T^3$-fibre  bundle, i.e. $X_{U_0} = \mathcal E_{U_0} / \Lambda_{U_0}$, where $\mathcal E_{U_0}$ is a rank $3$-vector bundle over $U_0$ and $\Lambda_{U_0}$ is a maximal lattice.  Then, topologically, in order to glue $f_U: X_U \rightarrow U$ to  $f_0: X_{B_0} \rightarrow B_0$ it is enough to show that $\mathcal E_{U_0} / \Lambda_{U_0}$ and $f_0^{-1}(U_0)$ are isomorphic as $T^3$-bundles, i.e. that they have the same monodromy. When $f_U$ is Lagrangian then an isomorphism is provided by action angle coordinates.
\subsection{Local models.} \label{local_fib} We sketch the construction of the local models, for details see \cite{TMS}{\S 2} and \cite{CB-M}. We will denote local fibrations by $f: X \rightarrow U$, instead of the more cumbersome $f_U: X_U \rightarrow U$. The examples will satisfy the following properties:
\begin{itemize}
\item[a)] $X$ is a (real) $6$-dimensional manifold with an $S^1$ action such that $X / S^1$ is a $5$-dimensional manifold. We denote $Y = X / S^1$. 
\item[b)] If $\pi: X \rightarrow Y$ is the projection, the image of the fixed point set of the $S^1$-action is an oriented $2$-dimensional submanifold $\Sigma \subset Y$. 
\item[c)] Let $Y'=Y-\Sigma$. If $X' = \pi^{-1}(Y')$, then $\pi: X' \rightarrow Y'$ is a principal $S^1$-bundle over $Y'$ such that the Chern class $c_1$, evaluated on a small unit sphere in the fibre of the normal bundle of $\Sigma$ is $\pm 1$. 
\item[d)] there exists a regular $T^2$ fibration $\bar f: Y \rightarrow U$  such that $f: X \rightarrow U$ is given by $f:=\bar f \circ\pi$.
\end{itemize}
Clearly the discriminant locus of $f$ will be $\Delta:=\bar f(\Sigma )$. One can readily see that for $b\in\Delta$, the singularities of the fibre $X_b$ occur along $\Sigma\cap \bar f^{-1}(b)$.

The prototypical example of $X$ with an $S^1$ action satisfying $(a)-(c)$ is given by $X= \C^3$ and $S^1$ action given by
\begin{equation} \label{circle_act}
\xi  \cdot (z_1, z_2, z_3) = (\xi z_1, \xi^{-1} z_2, z_3). 
\end{equation}
The quotient \ $Y$ can be identified with \ $\C^2 \times \R$ \ and the map \ $\pi$ \ with $\pi(z) = (z_1z_2, \ z_3, \ |z_1|^2-|z_2|^2)$. Clearly $ \Sigma = \{ (0, u, 0) \in \C^2 \times \R \}$.


\begin{rem} In the examples we will have $Y = T^2 \times U$, where $U$ is homeomorphic to a $3$-ball. In particular the fibres of $Y$ have a linear structure. This fact, together with the $S^1$ action on $X$ and a choice of a section $\sigma_0: U \rightarrow X$, implies that $f^{-1}(U- \Delta)$ has the structure of a $T^3$-fibre bundle $\mathcal E / \Lambda$. 
\end{rem}

\subsection{Generic singular fibration}  \label{ex. (2,2)}
We describe the model for the fibration over a neighborhood $U$ of an edge of $\Delta$. Let $U=D\times (0,1)$, where $D \subset \C$ is the unit disc, and let $Y=T^2\times U$. Let $\Sigma\subset Y$ be a cylinder defined as follows. Let $e_2,e_3$ be a basis of $H_1(T^2,\Z)$. Let $S^1\subset T^2$ be a circle representing the homology class $e_3$. Define $\Sigma=S^1\times\{ 0\}\times (0,1)$. Now, one can construct a manifold $X$ together with an $S^1$ action and a map $\pi: X \rightarrow Y$, such that $X, Y, \Sigma$ and $\pi$ satisfy properties (a)-(c) above (see Proposition 2.5 of \cite{TMS}). We define $f=\bar f\circ\pi$, where $\bar f:Y\rightarrow U$ is the projection. Then $f$ is a $T^3$ fibration with singular fibres homeomorphic to $S^1$ times a fibre of type $I_1$, i.e. a pinched torus, lying over $\Delta:=\{ 0\}\times (0,1)$. 
If $e_1$ is an orbit of the $S^1$ action, one can take $e_1,e_2, e_3$ as a basis of $H_1(X_b, \Z)$, where $X_b$ is a regular fibre. 

In this basis the monodromy associated to a simple loop around $\Delta$ is
\begin{equation}\label{eq matrix g}
T=\left( \begin{array}{ccc}
                 1 & 1 & 0 \\
                 0 & 1  & 0 \\
                 0 & 0  & 1 \end{array} \right).
\end{equation}
If one also chooses a section, then the set of smooth fibres $X_0 = f^{-1}(U - \Delta)$ has the structure of a $T^3$-fibre bundle $\mathcal E / \Lambda$. 

An explicit Lagrangian fibration with this topology is defined as follows. Let
 \begin{equation} \label{edge_space}
  X = \{ (z_1, z_2, z_3) \in \C^2 \times \C \, | \, z_1z_2 -1 \neq 0, \ z_3 \neq 0 \} 
 \end{equation}
with the standard symplectic form induced from $\C^3$ and $U = \R^3$. Define $f: X \rightarrow U$ to be
\begin{equation} \label{edge_fib}
 f(z) = (\log|z_1z_2 - 1|, |z_1|^2 - |z_2|^2,  \log |z_3|), 
\end{equation}
The discriminant locus is $\Delta = \{x_1 = x_2 = 0 \}$. The $S^1$ action is given by (\ref{circle_act}).
The quotient space can be identified with $Y= \R \times (\C^*)^2$ and the map $\pi: X \rightarrow Y$ with $\pi (z_1,z_2, z_3)=(|z_1|^2-|z_2|^2, z_1z_2-1, z_3)$.
Here 
\[ \Sigma = \Crit f =  \{ (0,-1,u), \ u\in \C^* \}. \] 
 Notice that $f$ is actually invariant with respect to the $T^2$ action given by 
\begin{equation} \label{torus_act}
\xi \cdot (z_1, z_2, z_3) = (\xi_1z_1, \xi_1^{-1} z_2, \xi_2 z_3),
\end{equation}
for $\xi =(\xi_1, \xi_2) \in T^2$. The second and third components of $f$ give the moment map with respect to this action.  Then $X/ T^2$ can be identified with $(\C - \{1 \}) \times \R^2$, with coordinates $(u, t_1, t_2)$ and the projection $\pi_{T^2}: X \rightarrow X/T^2$ is given by 
\[ \pi_{T^2}(z_1, z_2, z_3) = (z_1z_2, |z_1|^2- |z_2|^2, \log |z_3|). \]

\subsection{The negative fibration} \label{ex. (2,1)}
This example is a model of a fibration over a neighborhood $U$ of a negative  vertex of $\Delta$. We give two possible versions, which are topologically equivalent. The first one is defined as follows. Let 
\[ \bar Y=T^2 \times \R^2 \]
Define $\Delta \subset \R^2$ to be $\Delta= \{ b_0 \} \cup \Delta_1\cup\Delta_2\cup\Delta_3$ where:
\begin{eqnarray} \label{Yshape}
\ &  b_0 = (0,0),  & \ \\ 
\Delta_1 = \{ (-t, 0) \ | \ t > 0 \}, \ &  \Delta_2 = \{ (0, -t) \ | \ t > 0 \}, \  & \Delta_3 = \{ (t, t) \ | \ t > 0 \}. \nonumber
\end{eqnarray}
So that $\Delta$ is a graph with a trivalent vertex $b_0$ and three legs, $\Delta_i$, $i=1,2,3$ (the shape of a letter ``Y"). Fix a basis $e_2$, $e_3$ for $H_1(T^2,\Z)$. Define $\Sigma \subset \bar Y$ to be a ``pair of pants'' lying over $\Delta$ such that for $i=1,2,3$, $\Sigma \cap (T^2\times\Delta_i)$  is the cylinder $S^1 \times \Delta_i$, where $S^1$ is a circle in $T^2$ representing the classes $-e_3$, $-e_2$ and $e_2+e_3$ respectively. These legs can be glued together over the vertex $b_0$ of $\Delta$ in such a way that $\Sigma \cap (T^2 \times \{ b_0 \})$ is a figure eight curve. Now let 
\[ Y = \bar Y \times \R = T^2 \times \R^3 \]
and identify $\bar Y$ with $\bar Y \times \{ 0 \}$ and $\Delta \subset \R^2$ with $\Delta \times \{ 0 \} \subset \R^2 \times \R$. Now, one can construct a manifold $X$ with an $S^1$ action and a map $\pi:X \rightarrow Y$ satisfying the properties (a)-(c) above. Consider the trivial $T^2$ fibration $\bar f:Y\rightarrow \R^3$ given by projection. The composition $f=\bar f\circ\pi$ is $3$-torus fibration. For $b\in\Delta$ the fibre $X_b$ is singular along $\bar f^{-1}(b)\cap \Sigma $. Thus the fibres over $\Delta_i$ are homeomorphic to $I_1\times S^1$, whereas the central fibre, $X_{b_0}$, is singular along the figure eight curve. We can take as 
a basis of $H_1(X_b,\Z )$, $e_1(b), e_2(b),e_3(b)$, where $e_2$ and $e_3$ are the 1-cycles in $\bar f^{-1}(b)=T^2$ as before and $e_1$ is a fibre of the $S^1$-bundle. In this basis, the monodromy matrices associated to suitable loops about the legs $\Delta_i$ are 
\begin{equation}\label{neg_mon}
T_1=\left( \begin{array}{ccc}
                 1 & 1 & 0 \\
                 0 & 1  & 0 \\
                 0 & 0  & 1 
              \end{array} \right),\quad 
T_2=\left( \begin{array}{ccc}
                 1 & 0 & 1 \\
                 0 & 1  & 0 \\
                 0 & 0  & 1 
              \end{array} \right),\quad 
T_3=\left( \begin{array}{ccc}
                 1 & 1 & 1 \\
                 0 & 1  & 0 \\
                 0 & 0  & 1 
\end{array}\right). 
\end{equation}
\medskip
We now describe the second version. It is defined over 
\[ X =  \{ z_1z_2 + z_3 -1 \neq 0 \} \cap \{ z_1z_2 - z_3 \neq 0  \} \]
by the function
\[ f(z_1, z_2, z_3) = (|z_1|^2-|z_2|^2, \log |z_1z_2 + z_3 -1|, \log |z_1z_2 - z_3|). \]
If we consider the $S^1$ action (\ref{circle_act}) and the associated projection $\pi(z_1,z_2,z_3) =(|z_1|^2-|z_2|^2, z_1z_2, z_3)$ onto the quotient space. Then $f = \bar f \circ \pi$, where 
\[ \bar f: (t, u) \mapsto (t, \log |u_1 + u_2 -1|, \log |u_1 - u_2 | ). \]
which is a $T^2$ fibration. Notice that in this case 
\[ \Sigma = \{ t=0, u_1 = 0 \}. \] 
\begin{figure}[!ht] 
\begin{center}
\includegraphics{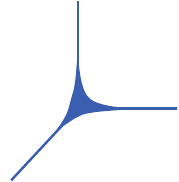}
\caption{}\label{3leg_amoeba}
\end{center}
\end{figure}
The discriminant locus $\Delta$ is $\bar f(\Sigma)$, which can be seen to be a codimension $1$ thickening of the letter ``Y'' graph (i.e. the amoeba of a line). It is not difficult to perturb this fibration so that the ends of the legs of $\Delta$ become pinched to codimension $2$ (see Figure \ref{3leg_amoeba}). With more effort one can make this fibration into a piece-wise smooth Lagrangian one (see \cite{CB-M}, Section 5). We emphasize that from a topological point of view this latter version of negative fibration is equivalent to the first version with $\Delta$ of codimension $2$. In fact one can show that the surface $\Sigma$ in this latter version is isotopic to a surface which maps to a genuine codimension $2$ ``Y'' shaped $\Delta$ constructed as in the first version (see Theorem 4.3 in \cite{ruan_newton}, \cite{mikh_pants} and Section 4 of \cite{Gross_spLagEx}). Also in this example the set of smooth fibres $X_0 = f^{-1} (\R^3 - \Delta)$ has the structure of a $3$-torus bundle $\mathcal E / \Lambda$. 




\subsection{The positive fibration} \label{ex. (1,2)}
In this case we give an explicit fibration which turns out to be also Lagrangian. Here $X =  \{ z_1z_2 z_3 - 1 \neq 0\}$, with standard symplectic form induced from $\C^3$. The fibration $f: X \rightarrow \R^3$ is defined by
\begin{equation}
 f(z_1, z_2, z_3) = (\log |z_1z_2 z_3 - 1|, |z_1|^2-|z_2|^2, |z_1|^2-|z_3|^2). 
\end{equation}
Identify $\R^2$ with $\{ 0 \} \times \R^2 \subset \R^3$. The discriminant locus $\Delta$ is contained in $\R^2$ and 
it is given by $\Delta = \{b_0\} \cup \Delta_1 \cup \Delta_2 \cup \Delta_3$, where $b_0$ and the $\Delta_j$'s are as in (\ref{Yshape}). Notice that $f$ is invariant with respect to the $T^2$ action 
\begin{equation} \label{pos_t2_action} 
\xi \cdot (z_1, z_2, z_3) = (\xi_1^{-1} z_1, \xi_2^{-1} z_2, \xi_1 \xi_2 z_3). 
\end{equation}
The quotient $X/T^2$ can be identified with $\C^* \times \R^2$ via the map 
\begin{equation} \label{+ve_t2pi}
 \pi(z_1, z_2, z_3)=(z_1z_2z_3-1, |z_1|^2-|z_2|^2, |z_1|^2-|z_3|^2) 
\end{equation}
and $f$ is the composition of $\pi$ with  $\bar f: \C^* \times \R^2 \rightarrow \R^3$ given by 
\begin{equation} \label{+ve_t2bf}
 \bar f(u, x_1, x_2) = (\log |u|, x_1, x_2).
\end{equation}
 The $T^2$ orbits are generically isomorphic to $T^2$. Consider the following submanifolds of $X$:
 \[ L_1 = \{ z_1=z_3= 0, z_2 \neq 0 \}, \ \ L_2 = \{ z_1 = z_2 = 0, z_3 \neq 0 \}, \]
 \[  L_3 = \{ z_2 = z_3 = 0, z_1 \neq 0 \} .\]
 Observe that $f(L_j) = \Delta_j$. Moreover, consider the following circles in side $T^2$: $G_1=\{ \xi_2 = 1 \}$, $G_2 = \{ \xi_1 \xi_2 = 1 \}$ and $G_3 = \{ \xi_1 = 1 \}$. Then the stabilizer of points in $L_j$ is the circle $G_j$.   Obviously $(0,0,0)$ is (the unique) fixed point and $f(0,0,0) = b_0$. 
The singular fibre over the vertex of $\Delta$ can be described as $T^2 \times S^1$ after one of the $T^2$'s is collapsed to a point. The fibres over the legs of $\Delta$ are of generic singular type. Given a generic point $b \in \R^3$, choose a basis $e_1, e_2, e_3$ of $H_1(X_b, \Z)$ such that: $e_2$ and $e_3$ are represented, respectively, by the circles $G_1$ and $G_2$ with suitable orientation and $e_1$ is the fibre of $\bar f$ over $b$. Then, with respect to this basis, the monodromy matrices around the legs are the inverse transpose of the matrices in (\ref{neg_mon}).

\begin{thm} \label{section}
Given a $3$-dimensional tropical manifold $(B, \mathcal P, \phi)$, if $B$ is smooth in the sense of \S \ref{GSreconstruction}, then the above local models can be glued to $X_{B_0}$ in order to obtain a symplectic manifold $X_B$ and a Lagrangian fibration $f: X_B \rightarrow B$ such that
\begin{itemize}
 \item[a)] it fits into diagram (\ref{compactify}) provided we replace $B_0$ with a smaller open subset obtained by removing a small neighborhood of the negative vertices; 
 \item[b)] $f$ is continuous and fails to be smooth only over a small neighborhood of the negative vertices, where it is piecewise smooth;
 \item[c)] $f$ has a smooth Lagrangian section $\sigma_0: B \rightarrow X_B$ which extends the zero section on $X_{B_0}$. 
\end{itemize}
\end{thm}
 
We refer to Theorem  8.2 of \cite{CB-M} for the proof of this theorem and for the technical details. We also point out that  this result is based on Gross' topological construction of $X_B$ which is given in Theorem 2.1 of \cite{TMS}.   It is also useful to know that the local models can be modified so that the discriminant locus $\Delta$ is ``curved'',  i.e. the edges of $\Delta$ bend inside the two dimensional monodromy invariant planes that contain them (see Section 4.3 of \cite{CB-M}).

\begin{thm} \label{cekX} Given a $3$-dimensional tropical manifold $(B, \mathcal P, \phi)$ and its dual $(\check B, \check{\mathcal P}, \check{\phi})$ define the torus bundle
\[ \check X_{B_0} = TB_0 / \Lambda. \]
If we identify $(B, \Delta)=(\check B, \check \Delta)$, then $\check X_{B_0}$ and $X_{\check B_0}$ are homeomorphic $T^3$-bundles. In particular, if $B$ is smooth in the sense of \S \ref{GSreconstruction}, $\check X_{B_0}$ can be topologically compactified by gluing a positive fibre over a negative vertex and vice-versa. Denoting the compactification by $\check X_B$, we have that $\check X_{B}$ is homeomorphic to $X_{\check B}$. 
\end{thm}

The first part follows from Proposition 1.50 of \cite{G-Siebert2003} which shows that the monodromy of $\check X_{B_0}$ is the same as the monodromy of $X_{\check B_0}$. The last two sentences follow from Theorem 2.1 of \cite{TMS}.


\section{A review of conifold transitions} \label{conif_review}
\subsection{Local geometry.} Recall that an ordinary double point or {\it node} is an isolated $3$-fold singularity with local equation in $\C^4$ given by 
\begin{equation}\label{eq:std_node}
 z_1z_2-z_3z_4=0
\end{equation}
We call  the ``local conifold'' the $3$-dimensional affine variety $X_0$ defined by this equation. It has two small resolutions. One of them is given by $\pi:X\rightarrow X_0$, where
\begin{equation}\label{eq:std_res}
X=\{(z,[t_1:t_2])\in\C^4\times \PP^1\mid t_1z_1=t_2z_3,\ t_2z_2=t_1z_4\}
\end{equation}
and $\pi$ is the projection onto $\C^4$. The other resolution is obtained by exchanging $z_3$ and $z_4$ in the equations defining $X$. Recall that $X$ is the total space of the bundle $\mathcal O_{\PP^1}(-1) \oplus \mathcal O_{\PP^1}(-1)$.
A smoothing of $X_0$, is given by
\begin{equation}\label{eq:std_smooth}
Y_\epsilon=\{ z_1z_2-z_3z_4=\epsilon\}.
\end{equation}
The symplectic form on $X_0$ and $Y_{\epsilon}$ is the restriction of the standard symplectic form on $\C^4$. 
Notice that $Y_{\epsilon}$ contains a Lagrangian $3$-sphere given by 
\begin{equation} \label{van_s3}
Y_\epsilon\cap\{ z_2=\bar z_1,\ z_3=-\bar z_4\}\cong S^3
\end{equation}
which disappears as $\epsilon \rightarrow 0$. It is called the vanishing cycle of the node. 
The symplectic structure on $X$ is induced by the symplectic structure on $\C^4\times \PP^1$ which, in coordinates $(z,t)$ with $t=t_2/t_1$, is given by
\begin{equation}\label{eq:sym_str}
\frac{i}{2} \sum_{i=1}^4dz_i\wedge d\bar{z}_i + i  \frac{\delta \, dt\wedge d\bar{t}}{2\pi(1+|t|^2)^2}
\end{equation}
where $ \delta$ is the area of $\pi^{-1}(0)\cong \PP^1$.

Observe that after the following change of coordinates
\[ z_1 \mapsto w_1 +i w_2, \ \ z_2 \mapsto w_1 - i w_2, \ \ z_3 \mapsto -w_3 - i w_4, \ \ z_4 \mapsto w_3 - i w_4 \]
we can write $X_0 = \{ \sum_{j=1}^4 w_j^2 = 0 \}$ and $Y_{\epsilon} =  \{ \sum_{j=1}^4 w_j^2 = \epsilon \}$. In these coordinates, the vanishing cycle in $Y_{\epsilon}$ is given by $S^3 = \{ \im w = 0 \} $.
The cotangent bundle of $S^3$ can be written as
\[ T^*S^3 = \{ (u,v) \in \R^4 \times \R^4 \ | \ |u| =1 , \inner{u}{v} = 0 \} \]
with canonical symplectic form $\sum_j dv_j \wedge du_j$.  The important fact is that there is a symplectomorphism 
\begin{equation} \label{simplecto}
\psi: X_0 - \{ 0 \} \rightarrow T^*S^3 - \{ v = 0 \} 
\end{equation}
given explicitly by 
\[ w_j = x_j + i y_j \mapsto \left( \frac{x_j}{|x|}, -2|x|y_j  \right). \]

More generally a complex conifold is a $3$-dimensional projective (or compact K\"ahler) variety $\bar X$ whose singular locus is a finite set of nodes $p_1, \ldots, p_k$.  Given a conifold, one can try to find a small resolution $\pi: X \rightarrow \bar X$, which replaces every node with an exceptional $\PP^1$, or a smoothing $\tilde X$, which replaces a node with a $3$-sphere. Passing from $X$ to $\tilde X$ (or vice-versa) is called a {\it conifold transition}. Notice that the existence of local diffeomorphisms $X-\PP^1 \rightarrow X_0 - \{ 0 \} \rightarrow T^*S^3 - S^3$ imply that small resolutions and smoothings can always be done topologically (by surgery), but there are obstructions if one wishes to preserve either the complex or symplectic (K\"ahler) structure. 

\subsection{Complex smoothings.} A complex small resolution of a set of nodes in a complex conifold always exists, in the sense that a small resolution always has a natural complex structure such that the exceptional $\PP^1$'s are complex submanifolds.  Notice also that there are $2^k$-choices of resolutions, where $k$ is the number of nodes, since for each node we have to choices of resolutions. Although locally the two resolutions differ just by a change of variables, globally we may have topologically distinct resolutions (they are related by a flop).
On the other hand finding a complex analytic smoothing of $\bar X$ is obstructed and the obstructions were studied by Friedman \cite{friedman-trivial} and Tian \cite{Tian}.  

\begin{defi} Given a manifold $X$, we say that a set of homology classes $\eta_j \in H_r(X, \Z)$, $j=1, \ldots, k$ satisfy a {\it good relation} if there exist integers $\lambda_j \neq 0$ for $j=1, \ldots, k$ such that 
\begin{equation} \label{cx_goodrel}
   \sum_{j} \lambda_j \eta_j= 0 
\end{equation}
\end{defi}

Assuming that the resolution $X$ of $\bar X$ satisfies the $\partial \bar \partial$-lemma, Friedman and Tian proved the following. Denote by $C_j \subset X$, $j=1, \ldots, k$ the exceptional $\PP^1$'s of a resolution and by $[C_j]$ their classes in $H_2(X; \Z)$, then a complex smoothing of $\bar X$ exists if and only if for some resolution the $[C_j]$'s satisfy a good relation. 

\subsection{Symplectic resolutions.} In general, even if $\bar X$ is projective, the resolution $X$ does not have a natural K\"ahler form. This suggests that symplectic resolutions are obstructed. The problem was studied by Smith-Thomas-Yau in \cite{STY}. First they show that a symplectic conifold (op. cit. Definitions 2.3 and 2.4) can always be symplectically smoothed (op. cit. Theorem 2.7). Essentially this consists of replacing the nodes $p_1, \ldots, p_k$ with Lagrangian spheres $L_1, \ldots, L_k$ using the symplectomorphism (\ref{simplecto}).   Then they prove the following ``mirror" of the Friedman-Tian result. In a smoothing $\tilde X$ of $\bar X$, the classes $[L_j] \in H_3(\tilde X, \Z)$ satisfy a good relation if and only if there is a symplectic structure on one of the $2^k$ choices of resolutions $X$ such that the resulting exceptional $\PP^1$'s are symplectic (see  op. cit. Theorem 2.9). 

\subsection{Local collars} \label{collars} Let us now make some observations which will be useful in the proof of Theorem \ref{good_rel_pos}. Consider the following $4$-dimensional submanifold with boundary of $T^*S^3$:
\[ N_0 = \{ (u,v) \in T^*S^3 \ | \ v_1 = - \lambda u_2, \ v_2 = \lambda u_1, \ v_3 = - \lambda u_4, \ v_4 = \lambda u_3; \ \lambda \geq 0 \}. \]
Observe that $N_0$ can be regarded as half a real line bundle and we have
\[ \partial N_0 = S^3. \]
Under the symplectomorphism $\psi$ given in (\ref{simplecto}), $N_0-S^3$ is the image of the complex surface
\[ Q_0 = \{ w_1 = i w_2, \ w_3 = iw_4 \} \]
In the $z$ coordinates we have 
\begin{equation} \label{local_collar}
 Q_0 = \{ z_2 = 0, \ z_4 = 0 \}. 
\end{equation}
Clearly we could have also defined $Q_0$ to be one of the following $\{ z_2 = 0, \ z_3 = 0 \}$,  $\{ z_1 = 0, \ z_4 = 0 \}$ or $\{ z_1 = 0, \ z_3 = 0 \}$, then the closure of $\psi(Q_0)$ in $T^*S^3$ would still be a $4$-manifold bounding $S^3$, differing from $N_0$ only by a change in signs in the defining equations. We may think of $N_0$ or $Q_0$ as a ``local collar'' near the vanishing cycle. 
In particular, suppose that inside a symplectic conifold $\bar X$ there exists a $4$-manifold $S$ (without boundary) containing nodes $p_1, \ldots, p_k$, such that in local coordinates $z_1, \ldots, z_4$ around each node, $S$ coincides with $Q_0$. Then in a smoothing $\tilde X$, $S$ lifts to a $4$-dimensional manifold with boundary whose boundary is the union of the vanishing cycles $L_1, \ldots, L_k$. This follows from the proof of Theorem 2.7 of (\cite{STY}), where the node is replaced by a $3$-sphere using $\psi^{-1}$, and the fact that $\psi(Q_0) =  N_0 - S^3$.

We can similarly define a local collar near the exceptional $\PP^1$ in $X$. 
\begin{lem}
 Consider the following subset of $X_0$
\begin{equation} \label{local_collar2}
 P_0 = \{ (z,\bar z, r, s) \in \C \times \C \times \R_{\geq 0} \times \R_{\geq 0} \ | \  rs=|z|^2 \}. 
\end{equation}
Then $\pi^{-1}(P_0)$ is a real $3$-dimensional submanifold with boundary in $X$, homeomorphic to $\PP^1 \times [0,+\infty)$, whose boundary is the exceptional $\PP^1$.
\end{lem}
\begin{proof}
Clearly 
\[ \pi^{-1}(P_0) = \{(z, \bar z, r, s, [t_1:t_2]) \mid t_1z=t_2r,\ t_2 \bar z=t_1s \}. \]
Then, in the coordinate $t=t_2/t_1$, 
\[ \pi^{-1}(P_0) = \{ (rt, r \bar t, r, |t^2| r, t) \mid t \in \C, r \geq 0 \} \cong \C \times \R_{\geq 0}. \]
This proves the lemma.
\end{proof}
 Therefore suppose that inside a complex conifold $\bar X$ we can find a subset $S$, containing nodes $p_1, \ldots, p_k$, such that $S-\{p_1, \ldots, p_k \}$ is a real $3$-dimensional submanifold in $\bar X$ and, in a neighborhood of every node, $S$ coincides with $P_0$ in some local coordinates. Then $\pi^{-1}(S)$ is a real $3$-dimensional submanifold with boundary of a resolution $X$, whose boundary is the union of the exceptional curves. Hence the exceptional curves satisfy a good relation and the nodes can be smoothed. 
\subsection{Topology change.} Suppose that $X$ and $\tilde X$ are related by a conifold transition, i.e. they are respectively a resolution and a smoothing of a set of $k$ nodes $p_1, \ldots, p_k$ of a conifold $\bar X$. Let $d$ be the rank of the subgroup spanned by the homology classes of the exceptional curves  $[C_j] \in H_2(X, \Z)$ and $c$ be the rank of the subgroup spanned by the homology classes of the vanishing cycles $[S_j] \in H_3(\tilde X, \Z)$. Then we have the following formulas relating the Betti numbers of $X$ and $\tilde X$:
\begin{eqnarray}
 k & = & d + c, \nonumber \\
 b_2( \tilde X) & = & b_2(X) - d, \label{contran_top}\\
 b_3( \tilde X) & = & b_3(X) + 2c. \nonumber
\end{eqnarray}
For a proof we refer to the survey \cite{rossi-transitions} and the references therein. 

\subsection{Conifold transitions and mirror symmetry.} In \cite{morrison-look} Morrison conjectures that given $X$ and $\tilde X$ two Calabi-Yau's related by a conifold transition, then their mirror manifolds (if they exist) are also related by a conifold transition, but in the reverse direction, i.e. the mirror of the resolution $X$ is a smoothing $\tilde Y$ and the mirror of the smoothing $\tilde X$ is a resolution $Y$:
\begin{equation*}
\begin{CD}
X @>CT>> \tilde{X}\\
@VMSVV @VVMSV\\
\tilde{Y} @<CT<< Y
\end{CD} \end{equation*}
Morrison also extends the conjecture to more general ``extremal transitions" and supports it with many examples. More examples have appeared later in the literature, e.g. in \cite{conif_ms_grass}.
At the time Morrison wrote the article, the SYZ interpretation of mirror symmetry as dual special Lagrangian fibrations had just been proposed and, in the final remarks he suggests that ``such an understanding could ultimately lead to a proof of the conjecture using the new geometric definition of mirror symmetry''. To achieve this goal he suggests the ``important and challenging problem to understand how such fibrations behave under an extremal transition". The goal of our article is to take up this challenge, at least in the case of conifold transitions, and propose a general strategy using the techniques of the Gross-Siebert program and the properties of dual torus fibrations as studied in \cite{TMS} and \cite{CB-M}. We will show that the strategy works in many special cases but we believe that a more general statement should be in reach of current technologies. We also point out that in \cite{Ruan}, Ruan sketches a Lagrangian fibration on one of the examples of \cite{conif_ms_grass} constructed via a conifold transition.

\section{Explicit fibrations on the local conifold}\label{sec:algebraic}
In this section we discuss explicit torus fibrations on the local conifold $X_0$, on its smoothing $Y_{\epsilon}$ and on its small resolution $X$. These are slightly modified versions of Ruan's fibrations \cite{Ruan} (the maps here are proper, Ruan's fibrations are not) and are special cases of the examples in \cite{Gross_spLagEx}. The fibrations will be of two types: \textit{positive} fibrations, which are $T^2$ invariant, and \textit{negative} ones, which are $S^1$ invariant. We will need these local models to construct torus fibrations on topological conifolds (Corollary \ref{fibr_conifold}) and in the proof of Theorem \ref{good_rel_pos}. In particular it will be important to understand the topology of these models, such as local monodromy (Propositions \ref{mon_+node} and \ref{node_fibr}). We also have two technical paragraphs on the geometry of ``local collars'', these will be essential in the proof of Theorem \ref{good_rel_pos}, but may be skipped on first reading. 

It would be also desirable (although not essential in this paper) to have a symplectic structure on the conifolds constructed in Corollary \ref{fibr_conifold}, so that the fibrations are Lagrangian. We can achieve this, without too much effort, only in the case the conifold does not have negative nodes, since our model of the negative fibration is not Lagrangian. Therefore we also give Lagrangian models of the positive fibration.

\subsection{Positive fibrations}
We assume that the symplectic form on $X$ is given by (\ref{eq:sym_str}). Consider the $T^2$-action on $X$ which, for $\xi=(\xi_1,\xi_2)\in T^2$, is given by
\begin{equation}\label{eq:t2_action}
\xi\cdot (z_1, z_2, z_3, z_4, t) = (\xi_1z_1, \ \xi_1^{-1}z_2,  \ \xi_2z_3, \ \xi_2^{-1}z_4, \ \xi_1\xi^{-1}_2t).
\end{equation}
If we ignore the variable $t$, the same expression also gives a $T^2$ action on the conifold $X_0$ and on the smoothing $Y_{\epsilon}$. 

\begin{ex}[Resolution] \label{ex:res_alg_1} We define the fibration on $X$ and $X_0$. 
The above action is Hamiltonian and the corresponding moment map is $\mu=(\mu_1, \mu_2)$ where:
\begin{equation*}
\begin{split}
&\mu_1=|z_1|^2-|z_2|^2-\frac{\delta}{1+|t|^2}\\
&\mu_2=|z_3|^2-|z_4|^2+\frac{\delta}{1+|t|^2}.
\end{split}
\end{equation*}
Define $f_{\delta}: X \rightarrow \R^3$ by
\begin{equation}\label{eq:exp_res_1}
f_\delta(z,t)=\left(\log |z_1z_2+z_3z_4-1|,\ \mu_1(z,t),\ \mu_2(z,t)\right).
\end{equation}
The quotient of $X/T^2$ can be identified with $\C^* \times \R^2$ via the map 
\begin{equation} \label{+n_t2_proj}
\pi(z,t) = (z_1z_2 + z_3z_3 -1, \mu_1, \mu_2)
\end{equation}
 and $f_{\delta}$ is the composition of $\pi$ with $\bar f: \C^* \times \R^2 \rightarrow \R^3$ given by 
\begin{equation} \label{P_fibr}
 \bar f(u, x_1, x_2) = (\log|u|, x_1, x_2).
\end{equation}
Therefore $\Crit(f_{\delta})$ consists of the set of points where the $T^2$-action has non trivial stabilizer. Hence $\Crit (f_\delta)$ consists of the following components: 
\begin{equation}
\begin{split}
&L_0=\{(z,t)\in X\mid z=0, \ t \neq 0,  \infty \}\cong \C^*  \\
&L_j=\{(z,t)\in X \mid z_i=0, i\neq j \ \ \text{and} \ z_j \neq 0 \}\cong\C^*,\ j=1,\ldots , 4. \\
&b_1= \{ z=0, t=0 \}, \ \ b_2 = \{ z= 0, t = \infty \} 
\end{split}
\end{equation}
Notice that points of $L_2$ and $L_3$ must also satisfy $t= 0$, while points of $L_1$ and $L_4$ must satisfy $t=\infty$. These components are mapped to $\Delta\subset \{0\}\times\R^2$, where $\Delta$ is a trivalent graph  (see Figure \ref{type_I} (a)), consisting of the 5 edges $\Delta_j := f_{\delta}(L_j)$ and $2$ vertices $v_j := f_{\delta} (b_j)$, $j=1,2$, where
\begin{equation} \label{discr_resol}
\begin{split}
& \Delta_0 = \{ (0, -t, t) \, | \, 0 < t <\delta \}, \ \ \ \Delta_1 = \{ (0, t, 0) \, | \, t > 0 \},    \\
& \Delta_2 = \{ (0, -t-\delta, 0) \, | \, t > 0 \}, \ \ \  \Delta_3 = \{ (0, -\delta, t + \delta) \, | \, t > 0 \}, \\
& \Delta_4 = \{ 0, 0, -t ) | \, t > 0 \}, \ v_1 = (0, -\delta, \delta), \ \ v_2 = (0,0,0).
\end{split}
\end{equation}
The closure of $L_0$ is the exceptional $\PP^1$ of the resolution and $\Delta_0$ is the bounded edge of $\Delta$. Observe that when $\delta = 0$, we obtain a torus fibration on the conifold $X_0$, where the bounded edge $f_{\delta}(L_0)$ collapses to a point and $\Delta$ becomes a 4-valent graph. 

\begin{figure}[!ht] 
\begin{center}
\input{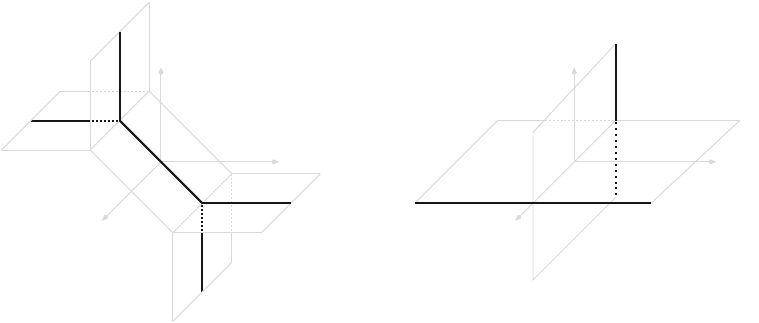_t}
\caption{Positive fibrations: resolution (a), smoothing (b) }\label{type_I}
\end{center}
\end{figure}
\end{ex}

\begin{lem} For all $\delta = 0$, $f_{0}: X_0 \rightarrow \R^3$ has a Lagrangian section $\sigma_0$ defined on a neighborhood of the vertex of $\Delta$. \end{lem}

\begin{proof}
Choose a smooth point $p$ on the fibre over the vertex of $\Delta$. Let $(x,y) = (x_1, x_2, x_2, y_1, y_2, y_3)$ be Darboux coordinates around this point such that $p = (0,0)$ and $f_0$ is given by $(x,y) \mapsto x$. Then $\sigma_0(x) = (x,0)$ is a Lagrangian section.  
\end{proof}

\begin{prop} \label{mon_+node}
For all $\delta \geq 0$, the map $f_{\delta}$ is a  Lagrangian 3-torus fibration. The singular fibres are of generic-singular type over edges $\Delta_j$. When $\delta >0$ the fibres over the two vertices are of positive type. Moreover, given a generic fibre $X_b$, there is a basis $e_1, e_2, e_3$ of $H_1(X_b, \Z)$ and simple closed loops $\gamma_j$ around the edges $\Delta_j$, such that the monodromy $T_j$ around these loops is given by the matrices
\[ T_1 = T_2 = \left( \begin{array}{ccc}
                 1 & 0 & 1 \\
                 0 & 1  & 0 \\
                 0 & 0  & 1 
              \end{array} \right), \]
 \[ T_3 = T_4 = \left( \begin{array}{ccc}
                 1 & 0 & 0 \\
                 0 & 1  & 1 \\
                 0 & 0  & 1 
              \end{array} \right) \]
\[ T_0 = T_2^{-1}T_3^{-1}. \]
\end{prop}
\begin{proof} To show that $f_{\delta}$ is Lagrangian, one can either verify it directly or observe that it is a fibration of the type described in Theorem A of \cite{Goldstein1} (see also Theorem 1.2 of \cite{Gross_spLagEx}). In fact the $T^2$ action (\ref{eq:t2_action}) is Hamiltonian. Also, $f_{\delta}$ is a special case of the special Lagrangian fibrations on open Calabi-Yau toric varieties constructed in Theorem 2.4 of \cite{Gross_spLagEx}. The fact that the fibres over the edges are of generic-singular type and the fibres over trivalent vertices are of positive type is a consequence of Propositions 3.3, 2.9  and Example 2.10 of \cite{TMS} (where positive fibres are called of type $(1,2)$). 

We now compute the monodromy. Consider the following circles in $T^2$: $G_1 = \{ \xi _1= 1 \}$, $G_2 = \{ \xi_2= 1 \}$ and $G_3 = \{ \xi_1\xi_2^{-1} = 1 \}$. Then $G_1$ is the stabilizer of points over $L_1$ and $L_2$, $G_2$ is the stabilizer of points over $L_3$ and $L_4$ and $G_3$ is the stabilizer of points over $L_0$. For a generic fibre $X_b$ choose a basis $e_1, e_2, e_3$ of $H_1(X_b, \Z)$ such that $e_1$  and $e_2$ are represented by $G_1$ and $G_2$ respectively, with a suitable orientation, and $e_3$ is the fibre over $b$ of $\bar f$. Then monodromy around the edges $\Delta_j$ (with suitable choices of loops $\gamma_j$) is given by the matrices $T_j$ (see Example \ref{ex. (1,2)}). Notice that the loops must satisfy $\gamma_1 \gamma_3 = \gamma_4 \gamma_2$, which matches the equation $T_1 T_3= T_4 T_2$.
\end{proof}

\begin{ex}[Smoothing] \label{ex:smooth_1} We now define the fibration on $Y_{\epsilon}$.  Let $f: Y_{\epsilon} \rightarrow \R^3$ be defined by
\begin{equation}\label{eq:exp_smooth_1}
f(z)=\left(\log |z_1z_2+z_3z_4-1|,\ |z_1|^2-|z_2|^2,\ |z_3|^2-|z_4|^2\right),
\end{equation}
where the last two function components give the moment map of the torus action (\ref{eq:t2_action}). The critical locus of $f$ consists of the set of points of $Y_{\epsilon}$ where the torus action has non-trivial stabilizer. This set has two connected components:
\begin{equation*}
\begin{split}
&C_1=\{z_1=z_2=0,\ z_3z_4=-\epsilon\} \cong\C^\ast \\
&C_2=\{z_3=z_4=0,\ z_1z_2=\epsilon\} \cong\C^\ast.
\end{split}
\end{equation*}
Therefore, the discriminant locus of $f$ has two disjoint components:
\begin{equation*}
\begin{split}
&f(C_1)=\{x_1=\log|1+\epsilon|,\ x_2=0\}\cong\R, \\
&f(C_2)=\{x_1=\log|1-\epsilon|,\ x_3=0\}\cong\R,
\end{split}
\end{equation*}
depicted in Figure \ref{type_I} (b).
The singular fibres are all of generic type. We have that $f$ is Lagrangian. This follows from Theorem A of \cite{Goldstein1}, since the $T^2$ action is Hamiltonian. This fibration is also one of the examples discussed as an application of Proposition 3.3 of \cite{Gross_spLagEx}. 

The vanishing cycle (\ref{van_s3}) is mapped by $f$  to the set:
\begin{equation*}
\{x_1=\log|\epsilon-2|z_3|^2-1|,\ x_2=x_3=0, 0\leq |z_3|^2\leq\epsilon\}
\end{equation*}
which is a segment joining the components of the discriminant locus.
\end{ex}

\subsection{Local collars and the positive fibration.} In this section we prove some technical lemmas on the geometry of the ``local collars'' defined in \S \ref{collars} in terms of the positive fibration $f_0: X_0 \rightarrow \R^3$. We will need these results in the proof of Theorem \ref{good_rel_pos}. We have the following

\begin{lem} \label{collar_fib}
Let $f_{0}: X_0 \rightarrow \R^3$ be as in (\ref{eq:exp_res_1}) with $\delta=0$ and let $Q_0 \subset X_0$ be as in (\ref{local_collar}). Let 
\[ S =  \{ x_1 = 0, x_2 \geq 0, x_3 \geq 0 \} \subseteq \R^3. \]
Then $\partial S \subset \Delta$,  $f(Q_0) =S$ and there exists a map $\sigma: S \rightarrow X_0$ such that
\begin{itemize}
\item[i)] $f_0 \circ \sigma = \id_{S}$ and $\sigma(\partial S) \subset \Crit f_0$;
\item[ii)] $Q_0 = T^2 \cdot \sigma(S)$. 
\end{itemize}
\end{lem}

\begin{proof}
The fact that $f(Q_0) =S$ is obvious. Moreover $\partial S = \Delta_1 \cup \Delta_3 \cup \{v_1 \}$. It is clear that the fibres of $f_0|_{Q_0}$ are orbits of the $T^2$ action (\ref{eq:t2_action}). Now define
\begin{equation} \label{collar_sec}
  \sigma(0, x_2, x_3) = (\sqrt{x_2}, 0, \sqrt{x_3}, 0). 
\end{equation}
We have that $\sigma$ satisfies $(i)$ and $(ii)$.
\end{proof}

The following lemma is used to define a perturbation of the local collar $Q_0$. 

\begin{lem} \label{collar_pert1}
Let $f_0: X_0 \rightarrow \R^3$ be as in (\ref{eq:exp_res_1}) with $\delta=0$, $S$ as in Lemma \ref{collar_fib} and $\sigma_0: \R^3 \rightarrow X_0$ a section. For any open neighborhood $U$ of $\Delta$, there exists a smaller neighborhood $V \subseteq U$ of $\Delta$ and a map $\sigma': S \rightarrow X_0$ such that $f_0 \circ \sigma' = \id_S$ and 
\begin{itemize}
\item[i)] $T^2 \cdot \sigma'(S \cap V) = Q_0 \cap f_0^{-1}(V)$;
\item[ii)] $T^2 \cdot \sigma'(S \cap (\R^3 - U)) = T^2 \cdot \sigma_0(S \cap (\R^3 - U))$
\end{itemize}
\end{lem}
\begin{proof}
We work over the quotient with respect to the $T^2$ action. We have $X_0 / T^2$ is isomorphic to $(\C)^* \times \R^2$ with projection $\pi$ given in (\ref{+n_t2_proj}). Then $f = \bar f \circ \pi$, where $\bar f$ is defined in (\ref{P_fibr}). Notice that $\pi (\Crit f_0) = \{ -1 \} \times \Delta$. Let $\sigma: S \rightarrow X_0$ be the map found in Lemma \ref{collar_fib} (formula (\ref{collar_sec})), then the quotient of $\sigma$ is $\bar \sigma = \pi \circ \sigma$ given by
\[
   \bar \sigma (0, x_2, x_3) = (-1, x_2, x_3).   
\]
On the other hand the quotient of $\sigma_0$ will be a section $\bar \sigma_0 = \pi \circ \sigma_0$ of $\bar f$, which restricted to $S$ has the form 
 \[ \bar \sigma_0|_{S}(0, x_2, x_3) = (e^{2\pi i \theta(x_2, x_3)}, x_2, x_3),\]
for some smooth real function $\theta$.
Now let $V \subset U$ be an open neighborhood of $\Delta$ such that there exists a smooth function $\rho: S \rightarrow [0,1]$ satisfying $\rho |_{S \cap V} =1$ and $\rho |_{S \cap (\R^3 - U)} = 0$. Define the following function on $S$:
\[ \tilde \theta = - \rho + (1- \rho) \theta. \]
Then we interpolate $\bar \sigma$ and $\bar \sigma_0$ by defining
\[ \bar \sigma'(0, x_2, x_3) = (e^{2\pi i \tilde \theta(x_2, x_3)}, x_2, x_3). \]
So that $\bar \sigma'|_{S \cap V} = \bar \sigma|_{S \cap V}$ and $\bar \sigma'|_{S \cap (\R^3-U)} = \bar \sigma_0|_{S \cap (\R^3-U)}$. Any lift $\sigma': S \rightarrow X_0$ of $\bar \sigma'$ will satisfy the lemma. 
\end{proof}

We also have a similar result for the other local collar $P_0$ defined in (\ref{local_collar2}). Since we are only interested in $P_0$ near the singular point of $X_0$, it will be convenient to redefine $P_0$ with an additional condition as follows:
\begin{equation} \label{local_collar3}
 P_0 = \{ (z,\bar z, r, s) \in \C \times \C \times \R_{\geq 0} \times \R_{\geq 0} \ | \  rs=|z|^2, \  rs < 1/2 \}.
\end{equation}
We consider the $S^1$ action on $X_0$ given by 
\begin{equation} \label{collar_s1}
 \xi \cdot (z_1, z_2, z_3, z_4) = (\xi z_1, \xi^{-1}z_2, z_3, z_4).
\end{equation}
 Observe that the fixed points of this action are the components $L_3$ and $L_4$ of $\Crit f_0$ mapped over $\Delta_3$ and $\Delta_4$.

\begin{lem} \label{collar2_fib}
Let $f_{0}: X_0 \rightarrow \R^3$ be as in (\ref{eq:exp_res_1}) with $\delta=0$ and let $P_0 \subset X_0$ be the 
local collar defined in (\ref{local_collar3}). Let 
\[ S =  \{ x_1 \leq 0, x_2 = 0 \} \subseteq \R^3. \]
Then $\partial S \subset \Delta$,  $f(P_0) =S$ and there exists a map $\check \sigma: S \rightarrow X_0$ such that
\begin{itemize}
\item[i)] $f_0 \circ \check \sigma = \id_{S}$ and $\check \sigma(\partial S) \subset \Crit f_0$;
\item[ii)] $P_0 = S^1 \cdot \check \sigma(S)$. 
\end{itemize}
where $S^1$ acts by (\ref{collar_s1}). 
\end{lem}

\begin{proof}  Clearly $\partial S = \Delta_3 \cup \Delta_4 \cup \{ v_1 \}$. Moreover
\[f_{0}|_{P_0 }(z,\bar z, r,s) = (\log|2rs -1|, 0, r^2-s^2). \]
A calculation shows that $f_0(P_0) = S$ and that the fibres of $f_0|_{P_0}$  are orbits of the $S^1$-action. Notice also that $P_0 \cap \Crit f_0$ is mapped one to one to $\partial S$. This implies the existence of $\check \sigma$ satisfying the lemma. 
\end{proof}

Also in this case we will need the following:
\begin{lem} \label{collar_pert2}
Let $f_0: X_0 \rightarrow \R^3$ be as in (\ref{eq:exp_res_1}) with $\delta=0$, $S$ as in Lemma \ref{collar2_fib} and $\sigma_0: \R^3 \rightarrow X_0$ a section. For any open neighborhood $U$ of $\Delta$, there exists a smaller neighbourhood $V \subseteq U$ of $\Delta$ and a map $\check \sigma': S \rightarrow X_0$ such that $f_0 \circ \check \sigma' = \id_S$ and 
\begin{itemize}
\item[i)] $S^1\cdot \check \sigma'(S \cap V) = P_0 \cap f_0^{-1}(V)$;
\item[ii)] $S^1 \cdot \check \sigma'(S \cap (\R^3 - U)) = S^1 \cdot \sigma_0(S \cap (\R^3 - U)).$
\end{itemize}
\end{lem}

\begin{proof}
The idea is similar to the proof of Lemma \ref{collar_pert1}. We consider the quotient $X_0 / S^1$ with projection $\pi: X_0 \rightarrow  X_0 / S^1$ and quotient fibration $\bar f: X_0/S^1 \rightarrow \R^3$ such that $f_0 = \bar f \circ \pi$. Let $S_0 = S - \partial S$ and consider the restriction of $\bar f$ to $S_0$, i.e. the map $\bar f|_{S_0}: \bar f^{-1}(S_0) \rightarrow S_0$. This defines a trivial  $T^2$-bundle over $S_0$, i.e. $\bar f^{-1}(S_0) \cong S_0 \times T^2$ and $\bar f|_{S_0}$ is the projection. We have the quotient maps $\check{\bar \sigma} = \pi \circ \check \sigma$ and $\bar \sigma_0 = \pi \circ \sigma_0$. For all $x \in S_0$, we may write 
\[ \check{\bar \sigma}(x) = (x, [ \theta(x)]) \ \ \text{and} \ \  \bar \sigma_0(x) = (x, [ \theta_0(x)]) \] 
where $\theta$ and $\theta_0$ are smooth maps with values in $\R^2$ and $[ \cdot ]$ denotes the class in $T^2 = \R^2 / \Z^2$. We may find an open neighborhood $V \subseteq U$ of $\Delta$ and a smooth real valued bump function $\rho: S \rightarrow \R$ as in the proof of Lemma \ref{collar_pert1} and define
\[ \tilde \theta = \rho \theta + (1- \rho) \theta_0. \]
Then we set $\check {\bar \sigma}'(x) = (x, [\tilde \theta (x)])$. Any lift of $\check{\sigma}': S \rightarrow X_0$ of 
$\check {\bar \sigma}'$ will satisfy the lemma.\end{proof}

\subsection{Negative fibrations}  Consider the $S^1$-action on $X$ defined as follows:
\begin{equation}\label{eq:s1_action}
\xi\cdot (z_1, z_2, z_3, z_4, t ) =( \xi z_1, \ \xi^{-1}z_2, \ z_3, \  z_4,  \ \xi t)
\end{equation}
for $\xi\in S^1$. Ignoring $t$, the same formula also gives an $S^1$ action on $X_0$ and $Y_{\epsilon}$.  

\begin{ex}[Resolution]\label{ex:fibr_res_2} We give another fibration on $X$ and $X_0$. The moment map of the action (\ref{eq:s1_action}) on $X$ is
\begin{equation*}
\mu=|z_1|^2-|z_2|^2+\frac{\delta}{1+|t|^2}.
\end{equation*}
The map $f_\delta: X \rightarrow \R^3$ defined by 
\begin{equation}\label{eq:exp_res_2}
f_\delta(z,t)=( \mu (z,t),\ \log |z_3-1|,\ \log |z_4-1|)
\end{equation}
gives a 3-torus fibration. Notice that $X/S^1$ can be identified with $\R \times (\C^*)^2$ via the map 
\begin{equation} \label{s1proj_neg}
\pi(z, t) = (\mu, z_3-1, z_4-1)
\end{equation}
and $f_{\delta}$ is the composition of $\pi$ with the regular $T^2$ fibration $\bar f: \R \times (\C^*)^2 \rightarrow \R^3$ given by 
\begin{equation} \label{qfib_neg}
    \bar f(u_1, u_2, x) = (x, \log |u_1|, \log |u_2|). 
\end{equation}
Therefore $\Crit (f_\delta)$ is the set of fixed points of the $S^1$ action. It consists of the following two components:
\begin{equation}\label{eq:crit_loc_2}
D_i=\{z_j=0, j\neq i\}, \quad i=3,\ 4 
\end{equation}
Notice that points of $D_3$ must also satisfy $t=0$ and points of $D_4$ must satisfy $t= \infty$. 
Thus $D_3$ and $D_4$ are mapped by $f_\delta$ onto the discriminant locus $\Delta$, which is the union of two lines as in Figure \ref{type_II} (a):
\begin{equation}\label{eq:discr_res_2}
\begin{split}
& \Delta_3 := f_{\delta}(D_3) = \{ x_1=\delta,\ x_3=0\}\cong\R\\
& \Delta_4: = f_{\delta}(D_4) = \{x_1=0, \ x_2=0\}\cong\R.
\end{split}
\end{equation}
\begin{figure}[!ht] 
\begin{center}
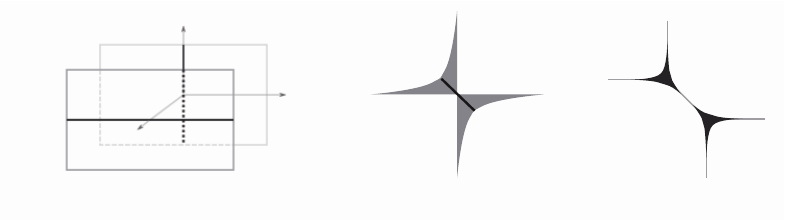
\caption{Negative fibrations: resolution (a), smoothing (b) and (c)}\label{type_II}
\end{center}
\end{figure}
Notice that when $\delta = 0$, $f_{0}$ is well defined on the local conifold $X_0$. In this case $D_3$ and $D_4$ intersect at the origin. Moreover the lines (\ref{eq:discr_res_2}), come together to form a 4-valent vertex. 
A direct calculation shows that the exceptional $\PP^1$ is mapped to the segment $\{ x_2=x_3=0, 0\leq x_1\leq \delta \}$ joining the components of (\ref{eq:discr_res_2}).  The fibres over $p$ in the interior of this segment intersect the exceptional curve along an $S^1$, which collapses to a point as $p$ approaches either component of the discriminant. Observe that this fibration is not Lagrangian with respect to the symplectic form (\ref{eq:sym_str}). We will discuss how to obtain Lagrangian fibrations of this model in \S \ref{tnodes_lagfib}. 
\end{ex}

\begin{lem} For all $\delta \geq 0$, $f_{\delta}$ has a smooth section $\sigma_0$. \end{lem}

\begin{proof}
Let $\Sigma_j =  \pi(D_j)$, then 
\[ \Sigma_3 = \{ (\delta, u, -1) \in \R \times (\C^*)^2 \}, \ \ \Sigma_4 = \{ (0,-1, u) \in  \R \times (\C^*)^2 \}. \]
Let $\bar \sigma_0: \R^3 \rightarrow \R \times (\C^*)^2$ be a section of $\bar f$ which avoids $\Sigma_3 \cup \Sigma_4$, e.g. $\sigma_0 (x_1, x_2, x_3) = (x_1, e^{x_2}, e^{x_3})$. Then a lift $\sigma_0: \R^3 \rightarrow X$ of $\bar \sigma_0$ exists since  the $S^1$-action on $X$ restricts to a trivial $S^1$ bundle over $\bar \sigma_0(\R^3)$. 
\end{proof}

Notice that $f_{\delta}^{-1}(\R^3 - \Delta)$ has the structure of a $T^3$-fibre bundle $\mathcal E / \Lambda$ which is compatible with the structure of $T^2$-fibre bundle $\bar f: \R \times (\C^*)^2 \rightarrow \R^3$ on the quotient with respect to the $S^1$ action, i.e. $\pi$ restricted to a fibre of $f_{\delta}$ is a linear map onto the fibre of $\bar f$. The section $\sigma_0$ corresponds to the zero section.

\begin{prop} \label{node_fibr}
For all $\delta \geq 0$, all singular fibres  of $f_{\delta}$ are of generic type, except the one over the vertex of $\Delta$ when $\delta =0$. Given a generic point $b \in \R^3$, there exists a basis $e_1, e_2, e_3$ of $H_1(X_b, \Z)$ with respect to which the monodromy $T_j$ around 
$\Delta_j$ is given by the following matrices
\begin{equation}\label{node_mon}
T_3=\left( \begin{array}{ccc}
                 1 & 0 & 1 \\
                 0 & 1  & 0 \\
                 0 & 0  & 1 
              \end{array} \right),
\
T_4=\left( \begin{array}{ccc}
                 1 & 1 & 0 \\
                 0 & 1  & 0 \\
                 0 & 0  & 1 
              \end{array} \right). 
\end{equation}
In particular, when $\delta = 0$, two opposite legs of $\Delta$ emanating from its vertex have the same monodromy.
\end{prop}

\begin{proof}
Let $\Sigma_3$ and $\Sigma_4$ be as in the previous lemma and $\Sigma = \Sigma_3 \cup \Sigma_4$. Then $\Sigma$ is a smooth surface, except when $\delta =0$, in which case $\Sigma_3$ and $\Sigma_4$ intersect at the singular point of $X_0$. 
When $\delta>0$, $f_{\delta}$ satisfies properties $(a)-(d)$ given at the beginning of \S \ref{local_fib}, with $Y = \R \times (\C^*)^2$. In the case $\delta=0$, this remains true if we remove the singular point of $X_0$. One can see that the $S^1$ action satisfies property $(c)$ as follows. On the open set $\{ t \neq \infty \} \subset X$, one has $z_1 = tz_3$ and $z_4 = tz_2$. Therefore, on this open set, $(z_2, z_3, t)$ define coordinates with respect to which the $S^1$ action (\ref{eq:s1_action}) can be written as $\xi \cdot (z_2, z_3, t) = ( \xi^{-1} z_2, z_3, \xi t)$. Hence it satisfies property $(c)$. Similarly on the open set $\{ t \neq 0 \}$.

Observe that a $T^2$ fibre of $\bar f$ over a point $b \in \Delta_4$, intersects $\Sigma_4$ along the circle $\{ (0, -1, u) \, | \, |u| = \text{const} \}$. Denote by $e_2$ the class of this circle inside $H_1 (\bar f^{-1}(b), \Z)$. Similarly $\bar f^{-1}(b)$, with $b \in \Delta_3$, intersects $\Sigma_3$ in the circle $\{ (\delta, u, -1) \, | \, |u| = \text{const} \}$. Denote by $e_3$ the class of this circle in $H_1 (\bar f^{-1}(b), \Z)$. Since $\bar f$ is a trivial $T^2$ fibration, we can identify $e_2, e_3$ with a basis of $H_1 (\bar f^{-1}(b), \Z)$ for any $b \in \R^3$. This shows that $f_{\delta}$, in a neighborhood of $\Delta_j$, has the structure described in Example \ref{ex. (2,2)}. Thus all fibres are of generic-singular type. This holds also when $\delta =0$, except for the fibre over the vertex of $\Delta$. 

Moreover, for a generic point $b \in \R^3$, we can choose as basis of $H_1(X_b,\Z )$, the cycles $e_1, e_2,e_3$, where $e_2$ and $e_3$ are the chosen cycles in $\bar f^{-1}(b)$ and $e_1$ is a fibre of the $S^1$-bundle. Thus, it follows from Example \ref{ex. (2,2)}, that with respect to this basis, the monodromies around $\Delta_3$ and $\Delta_4$ are as stated.
\end{proof}

\begin{ex}[Smoothing] \label{ex:fibr_smooth_2} We describe a fibration on $Y_{\epsilon}$. The moment map for the action (\ref{eq:s1_action}) on $Y_{\epsilon}$ is $\mu=|z_1|^2-|z_2|^2$.  The map $f:Y_\epsilon \rightarrow\R^3$ is 
\begin{equation}\label{eq:fibr_smooth_2}
f(z)=(|z_1|^2-|z_2|^2,\ \log |z_3-1|,\ \log |z_4-1|).
\end{equation}
It is a smooth torus fibration but it is not Lagrangian with respect to the standard symplectic form. 
The critical locus is 
\[
\Crit (f)=\{z_1=z_2=0, z_3z_4= - \epsilon\}
\]
and the discriminant $\mathcal A=f ( \Crit (f))$
has the shape of a 4-legged amoeba contained in the plane $\{x_1=0\}\subset\R^3$ as in Figure \ref{type_II} (b). Specifically, if $\Log(u,v) = (\log |u|, \log |v|)$ and 
\begin{equation}\label{eq:hypers}
V=\{(u,v)\in(\C^\ast)^2\mid (u+1)(v+1)+\epsilon=0\}, 
\end{equation}
then  $\mathcal A=\Log (V)$. A construction of a Lagrangian fibration on the smoothing will be explained in \S \ref{tnodes_lagfib}. 

A topologically equivalent description of this fibration, when $\epsilon >0$, can be constructed as follows. 
Let $\Delta = \Delta_0 \cup \ldots  \cup \Delta_4 \cup \{v_1 \} \cup \{v_2 \}$, where the $\Delta_j$'s and the $v_k$'s are as in (\ref{discr_resol}). Then, as we did in Example \ref{ex. (2,1)}, we can construct a surface $\Sigma$ in $Y = T^2 \times \R^3$ such that $\bar f(\Sigma) = \Delta$, in such a way that for a point $b \in \Delta_j$ we have that $\bar f^{-1}(b) \cap \Sigma$ is a circle in $T^2$ representing the class $e_2$ if $j=3$ or $4$, the class $e_3$ if $j=1$ or $2$ and the class $-e_2 -e_3$ if $j=0$. Then, one constructs $X$ with an $S^1$ action satisfying properties $(a)-(c)$ at the beginning of \S \ref{local_fib} (using Proposition 2.5 of \cite{TMS}). The fibration  $f: X \rightarrow \R^3$ is defined as in property $(d)$. If $e_1$ denotes the cycle of the $S^1$ action, with respect to the basis $e_1, e_2, e_3$ of $H_1(X_b, \Z)$, monodromy around the legs $\Delta_j$ is given by the following matrices
\[ T_1 = T_2 = \left( \begin{array}{ccc}
                 1 & 0 & 1 \\
                 0 & 1  & 0 \\
                 0 & 0  & 1 
              \end{array} \right), \ \ \  T_3 = T_4 = \left( \begin{array}{ccc}
                 1 & 1 & 0 \\
                 0 & 1  & 0 \\
                 0 & 0  & 1 
              \end{array} \right), \ \ \  T_0 = T_2^{-1}T_3^{-1}. \]
 In this case $X$ is homeomorphic to a dense open subset of the smoothing  $Y_{\epsilon}$. This is discussed in Section 4 of \cite{Gross_spLagEx}. Notice that in this case $\Delta$ has two vertices over which the fibration is of negative type. 
\end{ex}

\begin{rem} Observe that the torus fibrations on the resolution given in Example \ref{ex:res_alg_1} and the one on the smoothing given in Example \ref{ex:fibr_smooth_2} are topologically ``mirror dual'' in the sense of \cite{TMS}, since monodromies are dual to each other. The same is true for the torus fibrations on the smoothing given in Example \ref{ex:smooth_1} and on the resolution given in Example \ref{ex:fibr_res_2}. This was already observed by Ruan \cite{Ruan} and Gross in Section 4 of \cite{Gross_spLagEx}. In the next section we will show that this is true at the level of ``tropical manifolds'', where mirror duality is meant in the sense of discrete Legendre transform. 
\end{rem}

\subsection{Local collars and the negative fibration.}
We now prove some technical lemmas which describe the local collars of \S \ref{collars} in terms of the negative fibration on $X_0$. These will be used in the proof of Theorem \ref{good_rel_pos}. 

\begin{lem} \label{collar1_-fib} Let $f_{0}: X_0 \rightarrow \R^3$ be as in (\ref{eq:exp_res_2}) and let $Q_0 \subseteq X_0$ be the local collar defined in (\ref{local_collar}). Let 
\[ S = \{ x_1 \geq 0, x_3 = 0 \}. \]
Then $\partial S = \Delta_3$, where $\Delta_3$ is as in (\ref{eq:discr_res_2}) and $f_0(Q_0) = S$. Moreover, for every $b \in S - \partial S$, $f_0^{-1}(b) \cap Q_0$ is an affine $2$-dimensional subtorus of $f_0^{-1}(b)$, parallel to the monodromy invariant $T^2$ with respect to monodromy around $\Delta_3$. 
\end{lem}

\begin{proof}
Obviously $\partial S = \Delta_3$ and a simple calculation shows that $f_0(Q_0) = S$. Observe that $Q_0$ is invariant with respect to the $S^1$ action (\ref{eq:s1_action}). Let $\bar Q_0 = \pi(Q_0)$, i.e. the quotient of $Q_0$ by $S^1$, then
\[ \bar Q_0 = \{ (t, u, -1) \, | \, t \geq 0, \ u \in \C^* \} \]
Clearly, for all $b=(b_1, b_2, 0) \in S$, we have  
\[ \bar f^{-1}(b) \cap \bar Q_0 = \{ (b_1, e^{b_2 + i \theta}, -1) \, | \, \theta \in \R \} \]
which is an affine sub-circle of $\bar f^{-1}(b)$. As we have seen in the proof of Proposition \ref{node_fibr}, the homology class of this circle together with the homology class of the orbit of the $S^1$ action spans the monodromy invariant $T^2$ with respect to monodromy around $\Delta_3$. This proves the lemma. 
\end{proof}

We now treat the case of the local collar $P_0$ defined in (\ref{local_collar2}). Since we only need to understand $P_0$ in a neighborhood of the singular point of $X_0$  it is convenient to redefine $P_0$ as follows
\begin{equation} \label{local_collar4}
 P_0 = \{ (z,\bar z, r, s) \in \C \times \C \times \R_{\geq 0} \times \R_{\geq 0} \ | \  rs=|z|^2, \  r<1, \ s<1 \}.
\end{equation}

Then we have

\begin{lem} \label{collar2_-fib}
Let $f_{0}: X_0 \rightarrow \R^3$ be as in (\ref{eq:exp_res_2}) with $\delta=0$ and let $P_0 \subset X_0$ be the 
local collar defined in (\ref{local_collar4}). Let 
\[ S =\{ x_1=0, x_2 \leq 0, x_3\leq 0 \}. \]
Then $\partial S \subset \Delta$,  $f(P_0) =S$ and there exists a map $\check \sigma: S \rightarrow X_0$ such that
\begin{itemize}
\item[i)] $f_0 \circ \check \sigma = \id_{S}$ and $\check \sigma(\partial S) \subset \Crit f_0$;
\item[ii)] $P_0 = S^1 \cdot \check \sigma(S)$. 
\end{itemize}
where $S^1$ acts by (\ref{eq:s1_action}). 
\end{lem}
\begin{proof}
Clearly $\partial S = (\Delta_3 \cap \{ x_2 \leq 0 \}) \cap (\Delta_4 \cap \{ x_3 \leq 0 \})$, where $\Delta_3$ and $\Delta_4$ are as in (\ref{eq:discr_res_2}). A calculation show that $f(P_0) =S$.  Moreover the fibres of $f_0|_{P_0}$ are the orbits of the $S^1$ action (\ref{eq:s1_action}). 
Observe also that $P_0 \cap \Crit f_0$ maps one to one onto $\partial S$. Define
\[ \check \sigma(0,x_2, x_3) = \left( \sqrt{(1-e^{x_2})(1-e^{x_3})},  \sqrt{(1-e^{x_2})(1-e^{x_3})}, 1-e^{x_2}, 1-e^{x_3} \right). \]
Then $\check \sigma$ satisfies the stated properties.
\end{proof}

We also have the following analogue of Lemmas \ref{collar_pert1} and \ref{collar_pert2}. 
\begin{lem} \label{collar_pert3}
Let $f_0: X_0 \rightarrow \R^3$ be as in (\ref{eq:exp_res_2}) with $\delta=0$, $S$ as in Lemma \ref{collar2_-fib} and $\sigma_0: \R^3 \rightarrow X_0$ a section. For any open neighborhood $U$ of $\Delta$, there exists a smaller neighborhood $V \subseteq U$ of $\Delta$ and a map $\check \sigma': S \rightarrow X_0$ such that $f_0 \circ \check \sigma' = \id_S$ and 
\begin{itemize}
\item[i)] $S^1\cdot \check \sigma'(S \cap V) = P_0 \cap f_0^{-1}(V)$;
\item[ii)] $S^1 \cdot \check \sigma'(S \cap (\R^3 - U)) = S^1 \cdot \sigma_0(S \cap (\R^3 - U)).$
\end{itemize}
\end{lem}
The proof is identical to the proofs of Lemmas \ref{collar_pert1} and \ref{collar_pert2}, so we leave the details to the reader. 

\section{Affine geometry and the local tropical conifold}\label{sec:symplectic_models}
Here we construct two \emph{tropical} models of the conifold $X_0$, which we call positive and negative nodes. We show that these two models are mirror to each other in the sense of the discrete Legendre transform.  Moreover they give rise to torus fibrations on the conifold $X_0$ which are topologically equivalent to the ones in Examples \ref{ex:res_alg_1} and \ref{ex:fibr_res_2} respectively. Then we introduce tropical resolutions and smoothings and show that discrete Legendre transform exchanges smoothings with resolutions. Finally we explain how to obtain Lagrangian 3-torus fibrations from these tropical manifolds.

\subsection{Tropical nodes.}

\begin{ex} (Negative node) \label{ex:neg_node}
Let 
\begin{equation} \label{prism}
T = \conv \{ (0,0,0), (1,0,0), (0,1,0), (0,0,1), (1,0,1), (0,1,1) \}   
\end{equation}
be a triangular prism in $\R^3$. Construct a tropical manifold $(\check{B}, \check{ \mathcal P}, \check{\phi})$ as follows. Take two copies of $T$ and choose in each copy a square face, then label the vertices of these square faces by $v_1, v_2, v_3$ and $v_4$ as in Figure~\ref{node_type2}. Now glue the two copies of $T$ along the chosen faces by the unique affine linear transformation which matches the vertices with the same label. This is $\check B$. We denote by $\check e$ the common face of the two copies of $T$. 
   In $\R^3$, let $e_1, e_2, e_3$ denote the standard basis and let $(x,y,z)$ be the coordinates. Consider the fan $\Sigma$ in $\R^3$ whose $3$-dimensional cones are two adjacent octants, i.e.  $\cone(e_1, e_2, e_3)$, and  $\cone( e_1, - e_2, e_3 )$. At each vertex $v_j$ choose a fan structure which maps the tangent wedge at $v_j$ of the first copy of $T$ to the first octant and the tangent wedge of the second copy of $T$ to the second octant (see Figure \ref{node_type2}). Then the discriminant locus $\Delta$ is the union of the segments joining the barycenter of $\check e$ to the barycenters of its edges. 
\begin{figure}[!ht] 
\begin{center}
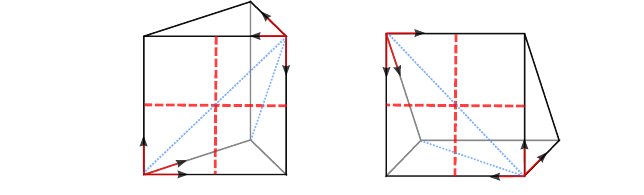
\caption{Match the vertices with the same labels. The arrows show the fan structure at the vertices. The (red) dashed lines denote $\Delta$.} \label{node_type2}
\end{center}
\end{figure}
We can easily compute the monodromy of $T\check{B}_0$. In fact, start at the vertex $v_4$ and choose the vectors $\{e_1, e_2, e_3 \}$ mentioned above as a basis for $T_{v_4}\check{B}_0$. Then consider a path which goes into the first prism, passes through $v_3$ and then comes back to $v_4$ going into the second prism.  Monodromy along this path is given by the matrix 
\begin{equation*} 
\left( \begin{array}{ccc}
                 1 & 1 & 0 \\
                 0 & 1  & 0 \\
                 0 & 0  & 1 
              \end{array} \right)
\end{equation*}
Similarly, consider a path which goes into the first prism, passes through $v_1$ and then comes back to $v_4$ going into the second prism. Monodromy along this path is given by the matrix 
\begin{equation*} 
\left( \begin{array}{ccc}
                 1 & 0 & 0 \\
                 0 & 1  & 0 \\
                 0 & 1  & 1 
              \end{array} \right).
\end{equation*}
Observe that $T_{v_4}\check{B}_0$ has a $2$-dimensional subspace, spanned by $e_1$ and $e_3$ which is invariant with respect to both monodromy transformations. 
Now define the strictly convex MPL-function $\check{\phi}$ so that on the fan $\Sigma$ it is given by
\[ \check \phi(x,y,z) = \begin{cases}
                   y \quad y \geq 0, \\
                   0 \quad y \leq 0.
                  \end{cases} \] 
\end{ex}

If we apply the discrete Legendre transform to $(\check B, \check{\mathcal P}, \check{\phi})$, we obtain the second tropical model for the conifold.

\begin{ex} \label{ex:pos_node} (Positive node) 
The triple $(B, \mathcal P, \phi)$, mirror to $(\check B, \check{\mathcal P}, \check{\phi})$ above, can be described as follows. The manifold $B$ can be identified with $\R^2 \times [0,1]$. If we denote by $Q_j \subseteq \R^2$, $j=1,2,3,4$ the four closed quadrants of $\R^2$, i.e. $\cone((1,0),(0,1))$, $\cone((-1,0),(0,1))$, $\cone((-1,0),(0,-1))$ and $\cone((1,0),(0,-1))$ respectively, then the $3$-dimensional polytopes of $\mathcal P$ are $L_j = Q_j \times [0,1]$. In fact $L_j$ is dual to the vertex $v_j$ in $\check {\mathcal P}$. Here we ignore the polytopes dual to vertices not contained in $\check e$.  The two vertices $p_0 = (0,0,0)$ and $p_1 = (0,0,1)$ are dual to the two triangular prisms. Consider the fan $\Sigma$ whose $3$-dimensional cones are:
\[   \begin{split}
       & \cone(e_1, -e_1 - e_2, e_3), \  \cone( e_1, e_2, e_3), \\
       & \cone(e_1, e_2, - e_3),   \      \cone(e_1, -e_1 - e_2, - e_3). 
      \end{split}     \]
The fan structure at $p_0$ maps tangent wedges of the polytopes $L_1, L_2, L_3$ and $L_4$ respectively to the first, second, third and fourth cone above. Similarly at $p_1$, the tangent wedges to $L_1, L_2, L_3$ and $L_4$ are mapped respectively to the first, fourth, third and second cone above, see Figure~\ref{node_type1}.
\begin{figure}[!ht] 
\begin{center}
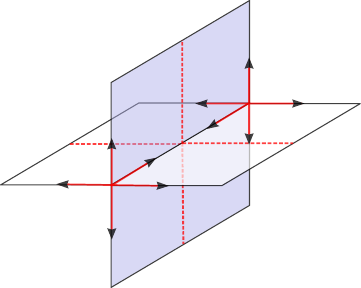
\caption{The tropical positive node. The arrows indicate the fan structure.} \label{node_type1}
\end{center}
\end{figure}

One easily checks that the discriminant locus is given by 
\[ \Delta = \{ (t,0, 1/2), \ t \in \R \} \cup \{ (0,t, 1/2), \ t \in \R \}, \]
with a $4$-valent vertex in $(0,0,1/2)$ . We can compute monodromy at $p_0$, where we choose the basis $\{e_1, e_2, e_3 \}$ of $T_{p_0}B_0$. Consider a path which goes from $p_0$ into $L_2$, reaches $p_1$ and then comes back to $p_0$ passing into $L_1$.  Monodromy along this path is given by the matrix 
\begin{equation*} 
\left( \begin{array}{ccc}
                 1 & 1 & 0 \\
                 0 & 1  & 0 \\
                 0 & 0  & 1 
              \end{array} \right)
\end{equation*}
Similarly, consider a path which goes into $L_2$, reaches $p_1$ and comes back to $p_0$ passing into $L_3$. Monodromy along this path is given by the matrix 
\begin{equation*} 
\left( \begin{array}{ccc}
                 1 & 0 & 1 \\
                 0 & 1  & 0 \\
                 0 & 0  & 1 
              \end{array} \right).
\end{equation*}
Observe that $T_{p_0}B_0$ has a $1$-dimensional subspace, spanned by $e_1$ which is invariant with respect to both monodromy transformations. 
\end{ex}

We can now give a more general notion of ``tropical conifold", which is a tropical manifold where $\Delta$ may have $4$-valent vertices (called nodes) modeled on the previous examples. 

\begin{defi} \label{trop_conif} 
We say that a $3$-dimensional tropical manifold $(B, \mathcal P, \phi)$ is a { \it tropical conifold} if $\Delta$ has vertices of valency $3$ or $4$. The $3$-valent vertices are either of positive or negative type (see Examples \ref{+v} and \ref{-v}).  Every $4$-valent vertex has a neighborhood which is integral affine isomorphic to a neighborhood of the vertex of $\Delta$ either in Example \ref{ex:neg_node} or Example \ref{ex:pos_node}. In the former case the vertex is called a negative node, in the latter a positive node.
\end{defi}

Of course, the discrete Legendre transform of a tropical conifold is also a tropical conifold. 

\subsection{Local tropical resolutions and smoothings}
We will describe two procedures which we claim should be the tropical analogue to resolving or smoothing a node. In fact each procedure uses discrete Legendre transform so that while it resolves the positive node (resp. negative) it smooths the mirror negative one (resp. positive).

Let us describe the local resolution of a positive node, which simultaneously smoothes the negative one.  A positive node is contained in the interior of an edge $e$ of $\mathcal P$ which belongs to four $3$-dimensional faces. In the mirror $(\check B, \check{\mathcal P}, \check \phi)$, the face dual to $e$ is the square face $\check e$. To ``smooth'' the negative node we proceed as follows.  Subdivide $\check e$ in two triangles by adding a diagonal.  Then we need to find a suitable polyhedral decomposition $\check{\mathcal P}'$ which is a refinement of $\check{\mathcal P}$ and  which induces the chosen subdivision of $\check e$. Next, we need an MPL-function $\check{\phi}'$ which is strictly convex with respect $\check{\mathcal P}'$. The given subdivision of $\check e$ also implies a change in the discriminant locus (see Figure \ref{transition}), where the $4$-valent vertex splits in two $3$-valent ones. This is a smoothing of the negative node. The resolution of the positive node is the discrete Legendre transform of the smoothing of the node. The process is illustrated in Figure~\ref{transition}.
\begin{figure}[!ht] 
\begin{center}
\includegraphics{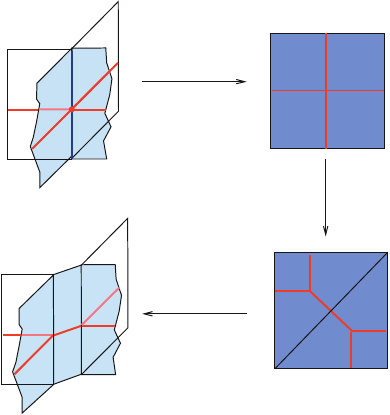}
\caption{The resolution of a positive node: the horizontal arrows are the discrete Legendre transform, the vertical one is the smoothing of the negative node.} \label{transition}
\end{center}
\end{figure}

Similarly we can describe the resolution of a negative node with the simultaneous smoothing of its mirror positive one (Figure \ref{transition2}). To smooth a positive node, first subdivide the edge $\ell$ where it lies in two new edges. Then we need a refinement $\mathcal P'$ of the decomposition $\mathcal P$ which induces the given subdivision of the edge. This decomposition has an extra vertex, hence we need to define a suitable fan structure at this vertex. Next, we find a new MPL function $\phi'$ which is strictly convex with respect to $\mathcal P'$. The choice of the fan structure at the new vertex must have the effect of separating the two lines of discriminant locus.  This is a smoothing of a positive node. The discrete Legendre transform gives us the resolution of the negative node. Note that the two lines of the discriminant locus in the negative node are separated by adding in between a new $3$-dimensional polytope, mirror to the new vertex. 

In the following paragraphs we apply this idea in detail. 
\begin{figure}[!ht] 
\begin{center}
\includegraphics{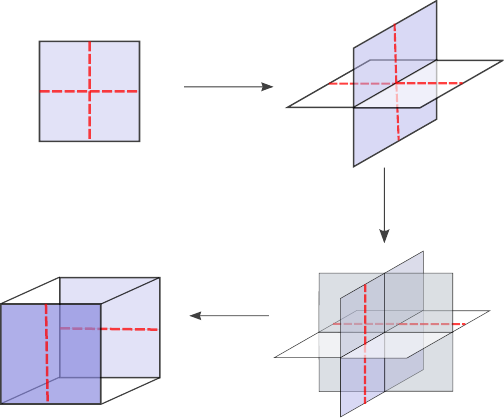}
\caption{The resolution of a negative node.} \label{transition2}
\end{center}
\end{figure}
\subsection{Resolving a positive node and smoothing its mirror.} \label{trop_res_+ve}
In Example \ref{ex:neg_node}, let us subdivide $\check e$ by taking the diagonal from $v_2$ to $v_4$ (see Figure \ref{node_type2}). Assume that $v_2$ and $v_4$ correspond respectively to the vertices $(0, 1, 1)$ and $(1,0,0)$ in the first copy of $T$ and to $(1,0,0)$ and $(0, 1, 1)$ in the second copy. Now subdivide $T$ in the two polytopes $\conv \{ (0,0,0), (1,0,0), (0,1,0), (0,1,1) \}$ and $\conv \{(0,0,0), (0,0,1), (1,0,1), (1,0,0), (0,1,1) \}$, and consider this subdivision for each copy of $T$ in $\check {\mathcal P}$. This gives the new decomposition $\check{\mathcal P}'$. Let us now find the new function $\check \phi$. The decomposition induces also a decomposition of the fans at the vertices. For instance, at $v_2$ the new fan is $\Sigma'_2$, whose $3$-dimensional cones are:
\[
 \cone( e_1, e_2, e_2 + e_3, e_1 + e_3), \cone(e_2+ e_3, e_1 + e_3, e_3),\]
\[ \cone ( e_1, -e_2, e_1 + e_3), \cone( e_3, - e_2, e_1 + e_3). \]
The first two subdivide the first octant and the other two subdivide the second one.
Similarly at $v_4$, the fan is $\Sigma'_4$ whose $3$-dimensional cones are:
\[ \cone(e_1, e_2, e_1 + e_3), \cone(e_1 + e_3, e_2, e_3), \]
\[ \cone(e_1, e_1-e_2, e_1 + e_3), \cone(e_3, - e_2, e_1 - e_2,  e_1 + e_3). \]
Now define an MPL-function $\tilde \phi$ as follows. It is the zero function on the fan at $v_1$. At $v_2$ it is the unique piecewise linear function which is $1$ on  $e_3$ and zero on all other generators of one dimensional cones. Similarly at $v_4$ define $\tilde \phi$ to be $1$ on $e_1$ and zero on all other generators. At $v_3$, $\tilde \phi$ is $-1$ on $e_2$ and zero on all other generators. Now define the new function $\check \phi'$ by
\[ \check \phi' = 2 \check \phi + \tilde \phi. \]
We can verify that $\check{\phi}'$ is well defined and strictly convex. 
The new discriminant locus $\Delta$ has two negative vertices. Applying the discrete Legendre transform to $(\check B, \check{ \mathcal P}', \check {\phi}')$ gives a new tropical manifold $(B', \mathcal P', \phi')$ which we define to be the resolution of a positive node (Figure~\ref{resolution_I}). 
\begin{figure}[!ht] 
\begin{center}
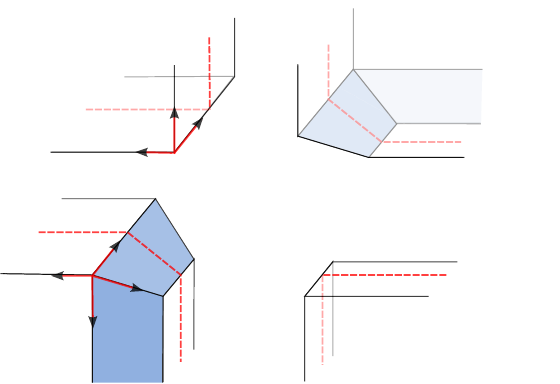
\caption{} \label{resolution_I}
\end{center}
\end{figure}
There are $4$ polytopes in $\mathcal P'$ dual to the vertices $v_1, \ldots, v_4$ of $\check {\mathcal P}'$, which we denote respectively by $\sigma_1, \ldots, \sigma_4$. We also denote by $\ell_{jk}$ the $2$-dimensional face in $\mathcal P'$ dual to the edge from $v_j$ to $v_k$, if such an edge exists. The polytopes $\sigma_2$ and $\sigma_4$ are integral affine isomorphic to the subset in $\R^3$ given by the following inequalities
\[ \begin{cases}
     -2 \leq y \leq 0, \\
      x \geq -1,  \\
      z \geq 0,  \\
      x+z \geq 0, \\
      x-y \geq 0. 
   \end{cases} \]
They intersect along the face $\ell_{24}$. We label the vertices of $\ell_{24}$ by $q_1, q_2, q_3$ and $q_4$ as in Figure~\ref{resolution_I}. The edge from $q_1$ to $q_4$ is the intersection between $\sigma_1$, $\sigma_2$ and $\sigma_4$. The polytopes $\sigma_1$ and $\sigma_3$ are as in the picture. Now consider the fan $\Sigma_1$ in $\R^3$ whose $3$-dimensional cones are:
\[ \cone(e_1, e_2, e_3), \cone(e_1, e_3, -e_2-e_3), \cone(e_1, e_2, -e_1-e_3, -e_2 - e_3). \]
Notice that $\Sigma_1$ is part of the normal fan of the two polytopes in $\check{ \mathcal P}'$ containing $v_1, v_2$ and $v_4$. Thus the fan structure at $q_1$ maps the tangent wedge to $\sigma_1$, $\sigma_2$ and $\sigma_4$ to the first, second and third cone respectively. Here $e_1$ and $e_2$ span a $2$-dimensional cone corresponding to $\ell_{14}$, $e_1$ and $e_3$ span a cone corresponding to $\ell_{12}$ and $e_1$ and $-e_2 - e_3$ span a cone corresponding to $\ell_{24}$. Similarly, at the vertex $q_4$ the fan structure maps the tangent wedge to $\sigma_1$, $\sigma_4$ and $\sigma_2$ to the first, second and third cones of $\Sigma_1$ respectively. The tangent wedges to $\ell_{14}$, $\ell_{12}$ and $\ell_{24}$ correspond respectively to $\cone( e_1, e_3 )$, $\cone(e_1, e_2 )$ and $\cone( e_1, -e_2 - e_3 )$. 
Let $\Sigma_2$ be the fan whose $3$-dimensional cones are:
\[ \cone(e_1, e_2, e_3), \cone( e_1, e_3, -e_1-e_2-e_3), \cone(e_1, e_2, -e_1-e_2-e_3). \]
Notice that $\Sigma_2$ is part of the normal fan of the two polytopes in $\check{ \mathcal P}'$ containing $v_3, v_2$ and $v_4$. Hence at the vertices $q_2$ and $q_3$ the fan structure maps the tangent wedges to $\sigma_3$, $\sigma_4$ and $\sigma_2$ respectively to the first, second and third cone of $\Sigma_2$. The tangent wedges to $\ell_{23}$, $\ell_{34}$ and $\ell_{24}$ correspond respectively to $\cone( e_1, e_2 )$, $\cone(e_1, e_3)$ and $\cone( e_1, -e_1 - e_2 - e_3)$. 

It can be verified that $\Delta$ has two $3$-valent positive vertices (see Figure~\ref{resolution_I}), one on the barycenter of the edge from $q_1$ to $q_4$ and the other on the barycenter of the edge from $q_2$ to $q_3$.

\subsection{Resolving a negative node and smoothing its mirror.} \label{trop_res_-ve}
To smooth the positive node in Example \ref{ex:pos_node}, consider the following refinement $\mathcal P'$ of $\mathcal P$. First rescale every polytope $L_j$ by a factor of two, so that $L_j = Q_j \times [0,2]$. Now subdivide each $L_j$ in the polytopes $L_j^- = Q_j \times [0,1]$ and $L_j^+ = Q_j \times [1,2]$. We have thus added a new vertex  which we denote by $q$. We now define the fan structure at $q$. Let $\Sigma_q$ be the fan in $\R^3$ whose maximal cones are the eight octants, namely $\cone(\pm e_1, \pm e_2, \pm e_3)$. The fan structure at $q$ then identifies the eight tangent wedges of $L_{j}^{\pm}$, $j = 1, \ldots, 4$ with these octants in the obvious way. The fan structures at $p_0$ and $p_1$ is unchanged. The new discriminant locus consists of two disjoint lines of generic-singularities: $\Delta^- = \{ (0,t,1/2),  \ t \in \R \}$ and $\Delta^{+} =  \{ (t,0,3/2),  \ t \in \R \}$. Assume that in the fan $\Sigma_q$, the vector $e_1$ is tangent to the edge from $q$ to $p_1$. On $\Sigma_q$ consider the piecewise linear function $\tilde \phi$ which takes the value $1$ on $e_1$ and the value $0$ on all other generators of $1$-dimensional cones of $\Sigma_q$. We have that 
\[ \phi' = \phi + \tilde \phi \]
is well defined and strictly convex. This structure defines the tropical smoothing of the positive node. A resolution of the negative node is given by the discrete Legendre transform $(\check B, \check{\mathcal P}')$. This can be described as follows. 

\begin{figure}[!ht] 
\begin{center}
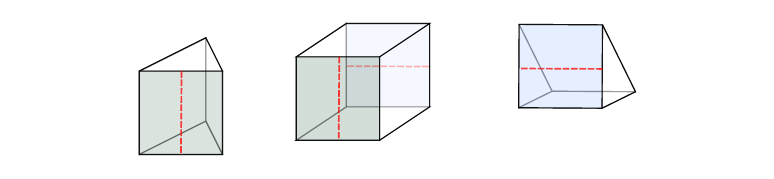
\caption{} \label{neg_resol}
\end{center}
\end{figure}

Take two copies of the triangular prism $T$, defined in (\ref{prism}) and a cube $Q=[0,1]^3 \subseteq \R^3$. Then glue the two copies of $T$ onto two opposite faces of $Q$ as in Figure \ref{neg_resol}, by matching vertices which have the same label. This is $B$ with its polyhedral decomposition $\check{\mathcal P}'$. The cube $Q$ is obviously mirror to the new vertex $q$ in $\mathcal P'$. The fan structure at a vertex $v_j^{\pm}$ is quite simple. In fact consider the fan $\Sigma$ whose $3$-dimensional cones are  $\cone(e_1, e_2, e_3)$ and $\cone(e_1, -e_2, e_3)$. Then the fan structure identifies the tangent wedges of $T$ and $Q$ at $v_j^{\pm}$ with these two octants. It is easy to see that $\Delta$ consists of two disjoint lines as in Figure \ref{neg_resol}. This is the tropical resolution of a negative node. 

There is also an alternative version of the smoothing of a positive node which proceeds as follows. Rescale all polytopes $L_j$ by a factor of $4$ and subdivide each $L_j$ in the polytopes $L_j^- = Q_j \times [0,1]$ and $L_j^+ = Q_j \times [1,4]$. Consider the fan whose three dimensional cones are
\[ \cone(e_1, e_1-e_2, e_3-e_1), \cone(e_1, e_2, e_3-e_1), \cone(e_1, e_2, -e_3), \]
\[ \cone(e_1, e_1-e_2, -e_3), \cone(-e_1, e_1-e_2, e_3-e_1), \cone(-e_1, e_2, e_3-e_1), \]
\[ \cone(-e_1, e_2, -e_3), \cone(-e_1, e_1-e_2, -e_3). \]
Now define the fan structure at the new vertex $q=(0,0,1)$ in such a way that the tangent wedges at $q$ to the polytopes $L_1^-$, $L_2^-$, $L_3^-$ and $L_4^-$ are mapped respectively to the first, second, third and fourth cone, while the tangent wedges to $L_1^+$, $L_2^+$, $L_3^+$ and $L_4^+$ are mapped respectively to the fifth, sixth, seventh and eighth cone. It can be checked that with such a choice of subdivision and fan structure at $q$, the discriminant consists of the two components $\Delta^- = \{(t,0, 1/2) \ t \in \R \}$ and $\Delta^+ = \{ (0, t, 5/2), \ t \in \R \}$. In particular, compared with the previous choices, the two lines of $\Delta$ have now moved in the opposite way. We leave it to the reader to determine a suitable strictly convex MPL function. 

The two possibilities for the smoothing should correspond, in the mirror, to the two choices of small resolution. 

\subsection{Tropical nodes and Lagrangian fibrations} \label{tnodes_lagfib}
In Examples \ref{ex:res_alg_1} and \ref{ex:smooth_1} we have described Lagrangian fibrations over (dense open subsets of) the local conifold $X_0$, of a small resolution $X$ and of a smoothing $Y_{\epsilon}$. It is known that Lagrangian fibrations induce an affine structure on the base of the fibration via action coordinates.

\begin{prop} \label{aff_str_+node} The affine structure induced on $\R^3 - \Delta$ by the Lagrangian fibrations on the conifold $X_0$, on its small resolution $X$ and on its smoothing $Y_{\epsilon}$ given respectively in Examples \ref{ex:res_alg_1} and \ref{ex:smooth_1}, is affine isomorphic (locally around the bounded edge of $\Delta$) respectively to the affine structures given in Examples \ref{ex:pos_node} for the tropical positive node, in \S \ref{trop_res_+ve} for its resolution and in \S \ref{trop_res_-ve} for its smoothing. 
\end{prop}

\begin{proof} We only do the case of the fibration on $X_0$, the other cases are similar.
First of all observe that the inverse transpose of the monodromy matrices in Examples \ref{ex:pos_node} are conjugate to the corresponding monodromy matrices of the fibration over $X_0$ found in Proposition \ref{mon_+node}. Therefore the fibration over $\R^3- \Delta$ gives a torus bundle which is topologically isomorphic to $X_{B_0} = T^*{B_0} / \Lambda^*$. 

In Proposition 4.11 of \cite{CB-M} we showed that the affine structure induced by a Lagrangian fibration over a positive vertex (Example \ref{ex. (1,2)}) is affine isomorphic to the affine structure of Example \ref{+v}.  The same ideas works here, so we only sketch the proof and refer to the above result for details. We work on the fibration over the conifold $f_0: X_0 \rightarrow \R^3$, the other cases are analogous. Observe that the symplectic form $\omega$ on $X_0$ is exact, so let $\eta$ be a primitive of $\omega$. We think the base $\R^3$ as $\R \times \R^2$ and we view $\Delta$ as inside the second factor (see (\ref{discr_resol}) with $\delta = 0$). Let 
\[ U = \R^3 - (\R_{\geq 0} \times \Delta). \]
Since $f_0$ is a topologically trivial $3$-torus bundle over $U$, $H_1(f_0^{-1}(U), \Z) \cong \Z^3$. Fix a basis $e_1, e_2, e_3$ of $H_1(f_0^{-1}(U), \Z)$ with respect to which monodromy around $\Delta$ is given by the matrices in Proposition \ref{mon_+node}. Action coordinates $A: U \rightarrow \R^3$ are defined by 
\[ A(b) = \left( - \int_{e_1} \eta|_{f_0^{-1}(b)},  \int_{e_2} \eta|_{f_0^{-1}(b)},  \int_{e_3} \eta|_{f_0^{-1}(b)} \right). \]
This is well defined since $\eta$ restricted to $f_0^{-1}(b)$ is closed. 

Now let $B$ be the affine manifold with singularities in Examples \ref{ex:pos_node} and denote by $\Delta_B$ its discriminant locus. The proof of the proposition consists in showing that $A$ defines a homeomorphism of pairs between $(\R^3, \Delta)$ and $(B, \Delta_B)$, or at least between suitable neighborhoods of the vertices of $\Delta$ and $\Delta_B$. Moreover $A$ gives an isomorphism of affine structures between $\R^3- \Delta$ and $B_0 = B - \Delta_B$. 
First of all, we need to show that $A$ extends continuously to $\R^3$. Let $A=(A_1, A_2, A_3)$. Since the last two components of $f_0$ are moment maps of the $T^2$ action and $e_2$ and $e_3$ have been chosen as the cycles generated by this action, it is easy to show that $A_2(b) = b_2$ and $A_3(b) = b_3$. So the last two components of $A$ extend to $\R^3$. Let us show that also $A_1$ extends. One can give another description of $A_1$ as follows. Fix $\bar b \in U$ and assume that the primitive $\eta$ has been chosen so that $A_1(\bar b) = 0$. 
Now choose some smooth path $\Gamma: [0,1] \rightarrow U$ between $\bar b$ and $b$ and construct a cylinder $S$ inside $f_0^{-1}(U)$ such that $f_0(S) = \Gamma([0,1])$ and $S \cap f_0^{-1}(\Gamma(t))$ is a circle representing the class $e_1$. Then we have
\[ A_1(b) = \int_{S} \omega. \]
Using the monodromy of the fibration one can show that $A_1$ extends to $\R^3- \Delta$ and then it is also easy to see that it extends to $\R^3$ (see op. cit. for details). To show that $A$ is a homeomorphism it is enough to show that for fixed values of $b_2$ and $b_3$ the map $t \mapsto A_1(t, b_2, b_3)$ is strictly monotone. Observe that since $(b_2, b_3)$ are values of the moment map $\mu= (\mu_1, \mu_2)$, we can form the reduced symplectic manifold $X_{(b_2, b_3)}$. One can see that $X_{(b_2, b_3)} \cong \C^*$ with some symplectic form $\omega_{\text{red}}$. One can compute $\omega_{\text{red}}$ explicitly and check that, although it may have poles at irregular points of the action, it will always be positive definite with respect to the standard orientation on $\C^*$. The Lagrangian fibration induced on $\C^*$ by $f_0$ is $\bar f: u \mapsto \log |u|$. Given $t_2 > t_1$, we have
\[ A(t_2, b_2, b_3) - A(t_1, b_2, b_3) = \int_{\bar f^{-1}[t_1, t_2]} \omega_{\text{red}} > 0. \]
This shows that $A$ is a homeomorphism onto its image. It remains to show that $A$ maps $\Delta$ to $\Delta_{B}$. In fact this may not be true, but as discussed at length in op. cit., one may slightly deform $\Delta_{B}$ inside the monodromy invariant planes, e.g. one may redefine $\Delta_B$ to be $A(\Delta)$. The fact that $A$ is an isomorphism of affine structures between $\R^3- \Delta$ and $B_0$ is a calculation which we leave to the reader. 
\end{proof}

  This shows that we can think of $X_0$, $X$ and $Y_{\epsilon}$ as a (partial) symplectic compactification $X_B$ of the symplectic manifold $X_{B_0} = T^*{B_0} / \Lambda^*$ constructed from the relevant tropical manifold $(B, \mathcal P, \Delta)$ (see diagram (\ref{compactify})). 

In the case of Examples \ref{ex:fibr_res_2} and \ref{ex:fibr_smooth_2}, the given fibrations on $X_0$, $X$ and $Y_{\epsilon}$ are not Lagrangian. In any case we have the following
\begin{prop} \label{-node_glue}
Let $B$ be a neighborhood of the vertex of the discriminant locus in Example \ref{ex:neg_node}, together with the induced affine structure with singularities and denote by $\Delta_B$ the discriminant locus. 
Let $f_0: X_0 \rightarrow \R^3$ be the $3$-torus fibration on the conifold given in Example \ref{ex:fibr_res_2}. Then there exists an homeomorphism of pairs $\iota: (B, \Delta_B) \rightarrow (\R^3, \Delta)$ and a commuting diagram
\begin{equation} \label{-n_diagr}
\begin{array}{ccc}
X_{B_0} & \hookrightarrow & X_0 \\ 
\downarrow & \  & \downarrow \\ 
B_0& \stackrel{\iota}{\hookrightarrow} & \R^3
\end{array}
\end{equation}
where $X_{B_0} = T^*{B_0} / \Lambda^*$.  The upper horizontal map is an isomorphism of $T^3$-torus bundles onto its image and the vertical maps are the torus fibrations. 
\end{prop}
\begin{proof}
Observe that the monodromy of the torus bundle $X_{B_0} \rightarrow B_0$ is given by inverse transpose of the monodromy matrices described in Example \ref{ex:neg_node}. These are easily seen to be conjugate to the monodromy matrices found in Proposition \ref{node_fibr}. Hence the proposition follows, since monodromy is the only topological invariant of these torus bundles. 
\end{proof}
Similarly one has that the smooth fibres of the torus fibration on the resolution $X$  (rep. smoothing $Y_{\epsilon}$) 
given in Example \ref{ex:fibr_res_2} (resp. Example \ref{ex:fibr_smooth_2}) form a torus bundle isomorphic 
to $X_{B_0} \rightarrow B_0$ obtained from the resolution of the negative node of \S \ref{trop_res_-ve} (resp. the smoothing of the negative node of \S \ref{trop_res_+ve}). Therefore, at a topological level, we can consider $X_0$, $X$ and $Y_{\epsilon}$ as the (partial) compactification of $X_{B_0}$. 

\begin{rem} \label{sympl_comp_tres} Notice that the resolution and smoothing of the tropical negative node have symplectic compactifications, which exist by the result of \cite{CB-M}, but we do not know if what we obtain is symplectomorphic to $X$ or $Y_{\epsilon}$, although we strongly believe this is true.
\end{rem}  

\begin{cor} \label{fibr_conifold}
Given a tropical conifold $(B, \mathcal P)$ as in Definition \ref{trop_conif}, there exists a topological conifold $X_B$ 
with a conifold singularity for every node of $B$ and a $3$-torus fibration $f: X_B \rightarrow B$ satisfying the commuting diagram (\ref{compactify}), where the upper horizontal arrow is an open (topological) embedding. Moreover $f$ has a section $\sigma_0: B \rightarrow X$.
\end{cor}
\begin{proof}
Using the homeomorphisms in the last two propositions we can glue local models applying the same arguments as in Gross' topological compactification (Theorem 2.1 of \cite{TMS}). To be more precise, first we glue local models over edges and over positive and negative trivalent vertices. This is done topologically as in Gross, op. cit., or symplectically by matching the affine structures induced by the Lagrangian fibrations (as in \cite{CB-M}).  Next, over positive and negative nodes, we glue the positive and negative fibrations over the conifold. One has to be careful that on overlaps, the gluing over the edges emanating from the node matches with the gluing of the local models over the edges.
Positive fibrations on the local conifold can be glued symplectically over positive nodes of $B$ by matching the affine structures as in Proposition \ref{aff_str_+node}.  On the overlaps, the fibration over the edges in the positive fibration can be matched (symplectically) to the local models over the edges following \S 4.4 of \cite{CB-M}. The negative fibration over a negative node can be glued topologically using Proposition \ref{-node_glue}. In this case, on the overlaps, the fibration over the egdes in the negative fibration can be matched to the local models over the edges using the same argument in the proof of Theorem 2.1 of \cite{TMS}). The section $\sigma_0$ is obtained by matching the zero section of $X_{B_0}$ with some fixed sections on the local models. 
\end{proof}

We also believe one can put a symplectic structure on $X_B$, extending the one on $X_{B_0}$, which makes $X_B$ into a symplectic conifold in the sense of \cite{STY}. In fact this is true if $B$ does not have negative nodes. We will call $X_B$ the conifold associated to $B$. Notice also that, as in the smooth case (see Theorem \ref{cekX}), we could consider $\check X_{B_0} = TB_0/ \Lambda$. Then $\check X_{B_0}$  can be compactified to form $\check X_{B}$. Over positive (resp. negative) nodes we glue the negative (resp. positive) fibration of the local conifold and similarly with positive and negative vertices. We still have that $\check X_{B}$ is homeomorphic to $X_{\check B}$. 

It is also reasonable to expect that the Gross-Siebert theorem (\cite{GrSi_re_aff_cx}), which associates to a tropical manifold a toric degeneration of Calabi-Yau manifolds, can be extended to tropical conifolds. Namely, one can expect that given a tropical conifold, we can reconstruct a toric degeneration whose fibres are Calabi-Yau manifolds with nodes.

\section{Good relations} \label{trop_relation}
We now start addressing the question of when a given set of nodes in a tropical conifold $B$ can be simultaneously resolved/smoothed.  We introduce the notion of a tropical $2$-cycle and we prove that if a set of nodes is contained in a tropical $2$-cycle, then both the vanishing cycles inside a smoothing of $X_{B}$ and the exceptional curves inside a small resolution of the mirror $X_{\check B}$ satisfy a good relation. Hence, morally, the obstructions to the symplectic resolution of $X_{B}$ and to the complex smoothing of the mirror $X_{\check B}$ vanish simultaneously. In this section we assume that the tropical conifold $B$ is oriented. In particular this implies that monodromy has values in $\Sl(\Z, n)$ and that the fibres of $X_{B}$ have a canonical orientation. 

\subsection{Tropical $2$-cycles} \label{tr_2_cy}
Recall that the tropical hyperplane $V^{n-1}$ in $\R^n$ is the set of points in $\R^n$ where the following piecewise linear function fails to be smooth
\[ f(x_1, \ldots, x_n) = \max \{x_1, \ldots, x_n,0 \}. \]
The point $(0, \ldots, 0)$ is called the vertex of the tropical hyperplane. We call $V^1$ and $V^2$ the tropical line and plane respectively.  

Our definition of a tropical $2$-cycle resembles the definition of a tropical surface such as in \cite{mikhalk_lectures}, but it has a more topological flavor. A tropical $2$-cycle will be a map from a certain space $S$ to $B$ plus some other data.

\begin{defi} \label{tropical_domain}
A \textit{tropical domain} $S$ is a compact Hausdorff topological space such that for every $p \in S$ there is a neighborhood $U$ of $p$ and a homeomorphism $\phi: (U,p) \rightarrow (W,q)$, where $(W,q)$ can be one of the following pairs (see Figure \ref{tdomain}):
\begin{itemize}
\item[a)] $q \in \R^2$ and $W$ is a neighborhood of $q$ ($p$ is called a {\it smooth point} of $S$);
\item[b)] $q$ is the vertex of a tropical plane $V^2$ and $W$ is a neighborhood of $q$ in $V^2$ ($p$ is called an {\it interior vertex} of $S$);
\item[c)] $q = (0,0,0) \in V^1 \times \R$ and $W$ is a neighborhood of $q$ in $V^1 \times \R$ ($p$ is called an {\it interior edge point});
\item[d)] $W$ is a neighborhood of $q=(0,0)$ in the closed half plane $\{ x \geq 0 \} \subset \R^2$ ($p$ is called {\it a smooth boundary point});
\item[e)] $q = (0,0,0) \in V^1 \times \R_{\geq 0}$ and $W$ is a neighborhood of $q$ ($p$ is called a {\it boundary vertex});
\end{itemize} 
We call points of type $a), b), c)$ interior points. Points of type $d)$ and $e)$ form the {\it boundary} of $S$, which we denote by $\partial S$. We also denote the set of smooth interior points by $S_{sm}$. We will also assume that all connected components of $S_{sm}$ are orientable and we fix an orientation. The union of all interior edge points forms a $1$-dimensional manifold, whose connected components we call the {\it interior edges} of $S$.
\end{defi}

\begin{figure}[!ht] 
\begin{center}
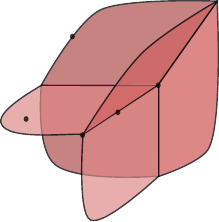
\caption{A tropical domain with smooth points (a), interior vertex (b), interior edge points (c), smooth boundary points (d) and boundary vertices (e).} \label{tdomain}
\end{center}
\end{figure}

We can now give the definition of a tropical $2$-cycle:
\begin{defi}  \label{tropical_cycle}
Let $(B, \mathcal{P}, \phi)$ be a tropical conifold. A \textit{tropical $2$-cycle} in $B$ is the data $(S, j, v)$ where 
\begin{itemize}
\item[a)] $S$ is a tropical domain and $j: (S, \partial S) \rightarrow (B, \Delta)$ is an embedding;
\item[b)] $j^{-1}(\Delta) = \partial S \cup \{q_{1}, \ldots, q_r \}$, where $q_1, \dots, q_r$ are smooth points and $j(q_k)$ is an edge point of $\Delta$ for all $k=1, \ldots,r$. We denote
\[ S_0 = S_{sm} - \{q_{1}, \ldots, q_r \}; \]
\item[c)] $v$ is a primitive, integral, parallel vector field defined along $j(S_0)$;
\item[d)] $j(p)$ is a negative vertex of $\Delta$ if and only if  $p$ is a boundary vertex of $S$;
\end{itemize} 
Notice that $v$ induces a rank $2$ subvector bundle $\mathcal F$ of $T^*B_0$ over $j(S_0)$, where: 
\[ \mathcal F_q = \ker v(q) = \{ \alpha \in T^*_qB_0 \ | \ \alpha(v(q)) = 0 \}. \]
for every $q \in j(S_0)$. The above properties imply that if $j(p)$ is a node or a positive vertex, then $p$ is a smooth boundary point. We may consider $\partial S$ as a graph where boundary vertices are trivalent vertices and $p$ is a bivalent vertex if and only if $j(p)$ is a node or a positive vertex. Then edges of $\partial S$ are mapped to a subset of the edges of $\Delta$.
We require in addition the following properties (see also Figure \ref{tcycle}): 
\begin{itemize}
\item[e)] if $j(p)$ is a negative node then the two edges of $\partial S$ emanating from $p$ are mapped to edges of $\Delta$ as in Figure \ref{tcycle}, picture (4);
\item[f)] if $j(p)$ is a positive node then the two edges of $\partial S$ emanating from $p$ are mapped to edges of $\Delta$ as in Figure \ref{tcycle}, picture (3);
\item[g)] let $p \in S$ be an interior edge point lying on the edge $e$. Choose an orientation of $e$. Given a small connected neighborhood $U$ of $p$,   $j(U \cap S_{sm})$ has $3$ connected components. The orientation of $e$ and the orientation of $B$ induce a cyclic ordering of these components. Denote by $v_1, v_2, v_3$ the vector field $v$ restricted to these components, indexed according to the ordering. Then the $v_k$'s span a rank two subspace of $T_pB$ and they satisfy the following {\it balancing condition}
\[
 \epsilon_1 v_1 + \epsilon_2 v_2 + \epsilon_3 v_3 = 0, 
\]
where $\epsilon_k = 1$ if the chosen orientation of $e$ coincides with the orientation induced from the orientation of the $k$-th component of $j(U \cap S_{sm})$, otherwise $\epsilon_k =-1$. 
\item[h)] if $e$ is an edge of $\partial S$ and $U$ a small neighborhood of $j(e)$, then for all points $q \in j(S_{sm}) \cap U$, $\mathcal F_q$ coincides with the monodromy invariant subspace  with respect to monodromy around $j(e)$;
\item[i)] if $q_k$ is one of the smooth points such that $j(q_k)$ is an edge point of $\Delta$, then monodromy of $\mathcal F$ around $j(q_k)$ is conjugate to the matrix 
\[ \left(
\begin{array}{cc}
  1 &  0  \\
  1 &   1
\end{array}
\right); \]
\item[j)] if $p$ is an interior vertex point, and $U$ is a small connected neighborhood of $p$,  then $j(U \cap S_{sm})$ has $6$ connected components. Let $v_k$, $k=1, \ldots,6$ denote the restrictions of $v$ to these components. Then the $v_k$'s span the whole of $T_pB$. 
\end{itemize}
\end{defi}

\begin{figure}[!ht] 
\begin{center}
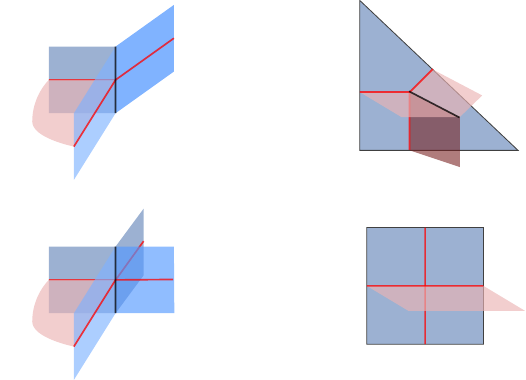
\caption{How $j(S)$ interacts with $\Delta$: 1) at a positive vertex; 2) at a negative vertex; 3) at a positive node; 4) at a negative node. } \label{tcycle}
\end{center}
\end{figure}

Notice that the embedding $j$ is topological, i.e. the components of $S_{sm}$ do not have to be mapped as affine subspaces of $B$.  In particular the boundary of $S$ can also be mapped to a ``curved"  $\Delta$ (see \S \ref{GSreconstruction} and the comments following Theorem \ref{section}). 

Given a tropical conifold $(B, \mathcal{P}, \phi)$, in Corollary \ref{fibr_conifold} we constructed the topological conifold $X_B$. Strictly speaking, we do not have general symplectic or complex reconstruction theorems for conifolds, nevertheless we have topological smoothings and resolutions of $X_B$ and therefore we can speak about vanishing cycles and exceptional curves and it makes sense to ask whether these satisfy good relations.  We will prove the following:

\begin{thm} \label{good_rel_pos} Let $(B, \mathcal{P}, \phi)$ be an oriented tropical conifold and $(\check B, \check{\mathcal{P}}, \check{\phi})$ its Legendre dual. Let $(S, j, v)$ be a tropical $2$-cycle and $p_1, \ldots, p_k$ be the points of $S$ which are mapped to nodes of $B$. Then the corresponding vanishing cycles $L_1, \ldots, L_k$ in a smoothing of $X_B$ and exceptional curves $C_1, \ldots, C_k$ in a small resolution of $X_{\check B}$ satisfy a good relation. 
\end{thm}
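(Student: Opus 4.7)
The plan is to use the tropical 2-cycle $(S,j,v)$ to construct two singular chains, one on each side of mirror symmetry: a 4-chain $\Sigma$ in a smoothing $\tilde X_B$ of $X_B$ whose boundary is $\sum \lambda_i [L_i]$ with every $\lambda_i\neq 0$, and a 3-chain $P$ in a small resolution $\tilde X_{\check B}$ of $X_{\check B}$ whose boundary is $\sum \mu_i [C_i]$ with every $\mu_i\neq 0$. Over a smooth interior point $q\in j(S_0)$, $\Sigma$ will contain the 2-subtorus $T^2_q:=\mathcal F_q/(\mathcal F_q\cap\Lambda^*_q)$ of the Lagrangian fibre of $X_B$ over $q$, while $P$ will contain the 1-subtorus $S^1_q:=\R v(q)/\Z v(q)$ inside the fibre of $\check X_B$ over $q$ (where $\check X_B$ is identified with $X_{\check B}$ via Remark \ref{cekX}). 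Taking unions as $q$ varies over $j(S_0)$ gives the interiors of $\Sigma$ and $P$.

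The tropical conditions (g)--(j) are designed precisely so that these interiors extend to chains whose only boundary lies over the nodes. Along an interior edge of $S$, the balancing relation (g) is equivalent, by Poincar\'e duality in the fibre, to the vanishing in $H_2(T^3)$ of the signed sum of the three 2-subtori $T^2_k$ coming from the three incident sheets, which forces the 4-chain to close locally. At an interior vertex, applying balancing along each of the four edges meeting there, together with condition (j) that the six vectors span $T_pB$, yields local closure of $\Sigma$. Along an edge of $\partial S$ mapped to an edge of $\Delta$, the monodromy-invariance condition (h) places $\mathcal F$ equal to the invariant cotangent 2-plane at a generic singular fibre (Example \ref{ex. (2,2)}); under this choice the $T^2$-bundle degenerates exactly as in a Kodaira $I_1$ fibration, and the 4-chain extends smoothly across the singular fibres with no new boundary. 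At each special smooth point $q_k$, condition (i) makes the local picture a standard Lefschetz 4-chain transverse to an edge of $\Delta$. Over positive and negative vertices of $\Delta$ (smooth boundary points of $S$ by (d)), the local models of Examples \ref{ex. (1,2)} and \ref{ex. (2,1)} provide the required extension, using $T^2$-invariance of the positive local model and the three balancing relations along the three legs at a negative vertex. The analogous checks for the 3-chain $P$ on the mirror side work verbatim using the dual local models.

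The only non-empty boundary of $\Sigma$ therefore lies over the nodes. Near a node $j(p_i)$, conditions (e)--(f) force $\partial S$ to cross $p_i$ with two edges going along two opposite legs of the 4-valent discriminant vertex, and condition (h) on both edges forces $\mathcal F_{p_i}$ to be the unique 2-plane simultaneously invariant under both local monodromies of the conifold. Under the identification of a neighbourhood of $j(p_i)$ in $X_B$ with the local conifold $X_0$ coming from \S\ref{sec:algebraic}--\ref{sec:symplectic_models}, $\Sigma$ matches exactly the local collar $\Sigma_0=\{z_2=z_4=0\}$ of \S\ref{collars}; passing to the smoothing $\tilde X_B$, this deforms to a 4-manifold-with-boundary whose boundary is precisely the vanishing 3-sphere $L_i$. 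Combining, $\partial \Sigma=\sum_{i=1}^k \lambda_i L_i$ with each $\lambda_i\in\{\pm 1\}$ determined by the orientation of $S$ at $p_i$, whence $\sum_i \lambda_i[L_i]=0$ in $H_3(\tilde X_B;\Z)$. Running the mirror argument with the collar $P_0$ of \S\ref{collars} (whose pullback to the small resolution of $X_0$ bounds the exceptional $\PP^1$) gives $\sum_i \mu_i[C_i]=0$ in $H_2(\tilde X_{\check B};\Z)$ with $\mu_i\in\{\pm 1\}$. Both relations are good relations, as claimed.

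The main technical obstacle is executing the local identifications stratum by stratum, translating combinatorial conditions on $(\mathcal F,v)$ into statements about the torus-subbundle structure in each local model of the (partial) Lagrangian compactification. The most delicate case is at the nodes, where one must verify that the monodromy-invariance forced by (h) on the two boundary edges of $S$ at $p_i$ picks out precisely the cotangent 2-plane corresponding to the local collar $\Sigma_0$ (resp.\ $P_0$). For negative nodes and negative vertices, the codimension-1 perturbation of $\Delta$ mentioned in Remark \ref{section} must also be accommodated. Once all local matches are established, the global chains $\Sigma$ and $P$ are assembled by straightforward patching, and the good-relation conclusions follow.
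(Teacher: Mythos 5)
Your proposal is correct and follows essentially the same route as the paper: lift $j(S_0)$ fibrewise via $\mathcal F/(\mathcal F\cap\Lambda^*)$ and $\mathcal V/(\mathcal V\cap\Lambda)$, close up over interior edges and vertices using the balancing condition (the paper's pair-of-pants Lemma \ref{pair_pants}), match the local models along $j(\partial S)$, and identify the result near each node with the local collars $\Sigma_0$ and $P_0$ of \S\ref{collars}. The only cosmetic difference is that you work with singular chains where the paper takes extra care to produce piecewise smooth submanifolds (and to interpolate the sections near $\Crit f$), but this does not change the homological conclusion.
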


We want to construct a (topological) $4$-dimensional submanifold with boundary $\tilde S$ inside a smoothing of $X_B$ and a $3$-dimensional (topological) submanifold with boundary $\tilde S^*$ inside a resolution of $X_{\check B}$ such that $\partial \tilde S = \cup_{i=1}^{k} L_i$ and $\partial \tilde S^* = \cup_{i=1}^{k} C_i$. 
Equivalently (see \S \ref{collars}) we may think of  $\tilde S$ as a $4$-dimensional submanifold without boundary inside $X_B$, containing the nodes, such that in local coordinates near the nodes, $\tilde S$ coincides with the local collar $Q_0$ as in (\ref{local_collar}). 
Similarly $\tilde S^*$ may be thought as a subset of $X_{\check B}$, which, away from the nodes, is a submanifold and in local coordinates near the nodes, it coincides with the local collar $P_0$ as in (\ref{local_collar2}).
The idea is as follows. Consider the rank two bundle $\mathcal F$ defined by $v$. Since $v$ is integral, we have that $\mathcal F / (\mathcal F \cap \Lambda^*)$ defines a $2$-torus bundle over $j(S_{0})$.  For every $\alpha \in \mathcal F_q$, denote by $[\alpha]$ its class inside  $\mathcal F_q / (\mathcal F_q \cap \Lambda^*)$. Given a section $\sigma: B_0 \rightarrow X_{B_0}$, define 
\begin{equation} \label{lift_S:sm}
 \tilde S_0 = \{ (q, \sigma(q) + [\alpha]) \ | \ q \in j(S_{0}) \ \ \text{and} \ \ \alpha \in \mathcal F_q \}.
\end{equation} 
Essentially $\tilde S_0$ is  $\mathcal F / (\mathcal F \cap \Lambda^*)$ translated by $\sigma$. We want to prove that for a suitable choice of $\sigma$, $\tilde S_0$ can be compactified to form the submanifold $\tilde S$ such that $f(\tilde S) = j(S)$. 

The construction of $\tilde S^*$ is similar. First of all, as observed in Theorem \ref{cekX}, instead of working in $X_{\check B}$ we can equivalently work in $\check X_{B}$, which is formed using the tangent bundle of $B_0$. 
We denote by $\check f: \check X_{B} \rightarrow B$ the fibration. The vector field $v$ generates a $1$-dimensional subbundle of $TB_0$ along $j(S_{0})$, which we call $\mathcal V$. 
Then $\mathcal V/ \mathcal V \cap \Lambda$ is an $S^1$ bundle over $j(S_{0})$. We form $\tilde S^*_{0}$ by translating $\mathcal V/ \mathcal V \cap \Lambda$ by a suitable section $\check \sigma: B \rightarrow \check X_{B}$. The set $\tilde S^*$ is constructed as a suitable compactification of $\tilde S_0^*$. 

As we will see in the construction, $\sigma$ and $\check \sigma$ should not be genuine sections. 
They should be maps from $j(S)$ to $X_B$ (resp. $\check X_B$) which are sections restricted to $j(S_{0})$ but such that $\sigma(j(\partial S)) \subset \Crit f $ (resp. $\check{\sigma}(j(\partial S)) \subset \Crit \check{f} $). 
We will first define local models in some detail. We point out that in the local constructions of $\sigma$ and $\check \sigma$ there is a certain amount of  flexibility. For instance we can use partitions of unity  to glue together various pieces of $\tilde S$ and $\tilde S^*$ so that they match up when we construct $X_B$ as in Corollary \ref{fibr_conifold}. In particular it will be convenient to construct $\sigma$ and $\check \sigma$ so that they coincide with the zero section $\sigma_0$ away from $j(\partial S)$.

\subsection{Local models at smooth boundary points....}
We discuss local models for $\tilde S$ and $\tilde S^*$ near an edge $e \subseteq j(\partial S)$ of $\Delta$. In this case, the local model for both torus fibrations $f$ and $\check f$ is the generic singular fibration of  Example \ref{ex. (2,2)}.  Condition $(h)$ of Definition \ref{tropical_cycle} implies that $v$ and $\mathcal F$ are uniquely determined in a neighborhood of $e$, so that the lifts $\tilde S_0$ and $\tilde S^*_0$ only depend on the choice of sections.

Notice that $v$ is the unique (up to sign) primitive integral vector such that the monodromy $T$ around the edge $e$ satisfies $T(w)-w=m v$ for all $w \in T_bB_0$, where $m \in \Z$ depends on $w$. It then follows from the first part of Example \ref{ex. (2,2)}, that for every $b \in B_0$ near $e$, the circle $\mathcal V_b / \mathcal V_b \cap \Lambda$ must be an orbit of the $S^1$ action (the cycle called $e_1$). Suppose that $U$ is a small neighborhood of $e$ and let $\check \sigma: j(S) \cap U \rightarrow \check X_B$ be a section such that 
\begin{equation} \label{csigma_bound}
  \check \sigma(e) \subseteq \Crit \check f.
\end{equation}
Then we define 
\begin{equation} \label{tss_edge}
 \tilde S^* \cap \check f^{-1}(U) := S^{1} \cdot \check \sigma(j(S) \cap U). 
\end{equation}
By construction $\tilde S^*- (\tilde S^* \cap \Crit \check f)$ coincides (inside $\check f^{-1}(U)$) with $\tilde S^*_0$.  
We can assume that $U \cap j(S) \cong e \times [0,1)$. Then, since the $S^1$ orbits collapse to points on $\Crit \check f$,  it is easy to see that $\tilde S^* \cap \check f^{-1}(U) \cong e \times D$, where $D \subset \C$ is the open unit disc. Thus $\tilde S^* \cap \check f^{-1}(U)$ is a $3$-manifold. 

In a similar way we can construct $\tilde S \cap f^{-1}(U)$. The orbits of the $T^2$ action (\ref{torus_act}) give the monodromy invariant $T^2$ with respect to monodromy around $e$ (see (\ref{eq matrix g})). Thus $\mathcal F_b / \mathcal F_b \cap \Lambda^*$ must coincide with such an orbit. In particular, given a section $\sigma: j(S) \cap U \rightarrow X_B$ such that 
\begin{equation} \label{sigma_bound}
 \sigma(e) \subseteq \Crit f, 
\end{equation}
then we define 
\begin{equation} \label{ts_edge}
 \tilde S \cap f^{-1}(U) :=  T^2 \cdot \sigma(j(S) \cap U). 
\end{equation}
By construction $\tilde S - \Crit f$ coincides (inside  $f^{-1}(U)$) with $\tilde S_0$.  
It can be seen that $\tilde S \cap f^{-1}(U) \cong e \times S^1 \times D$, and thus it is a $4$-manifold. 

Notice that the flexibility in the construction of $\sigma$ and $\check \sigma$ stands in the fact that we only require them to satisfy (\ref{sigma_bound}) and  (\ref{csigma_bound}) respectively. We have the following

\begin{lem} 
\label{lift_edge}
Let $f: X \rightarrow U$ be a generic-singular fibration as constructed in \S \ref{ex. (2,2)}, where $U = D \times (0,1)$ and $\Delta = \{ 0 \} \times (0,1) \subset U$ is the discriminant locus. Let $S = [0,1) \times (0,1)$ and let $j: S \rightarrow U$ be an embedding such that $j(\partial S)= \Delta$. Given any section $\sigma_0: U \rightarrow X$ of $f$ and a neighborhood  $V \subset U$ of $\Delta$ , there exists a section $\sigma: j(S) \rightarrow X$ such that 
\begin{itemize}
\item[i)] $\sigma(j(\partial S)) \subseteq \Crit f $; 
\item[ii)]  $S^1 \cdot \sigma|_{j(S) \cap (U-V)} = S^1 \cdot \sigma_0|_{j(S) \cap (U-V)}$;
\item[iii)] $T^2 \cdot \sigma|_{j(S) \cap (U-V)} = T^2 \cdot \sigma_0|_{j(S) \cap (U-V)}$.
\end{itemize}
\end{lem}
\begin{proof} 
We can work on the quotient by the $S^1$ action $Y= X / S^1 = U \times T^2$. We let $\pi: X \rightarrow Y$ be the quotient map and $\bar f: Y \rightarrow U$ the projection.  We defined $f = \bar f \circ \pi$. Let $\bar \sigma_0 = \pi \circ \sigma$ be the quotient section of $\bar f$. By the construction of Example \ref{ex. (2,2)}, we have that $\Sigma = \pi (\Crit f)$ is a ``cylinder" such that for all $b \in \Delta$, $\bar f^{-1}(b) \cap \Sigma \subset T^2$ is a circle representing some fixed homology class $e_3$. Decompose $T^2$ as $S^1 \times S^1$ in such a way that $S^1 \times \{ 1 \}$ represents the class $e_3$. We can choose coordinates on $S^1 \times S^1$ in such a way that $\sigma_0 (b) = (1, 1) \in S^1 \times S^1$ for all $b \in U$. We can also describe $\Sigma$ as follows. There exists a function $\tau: \Delta \rightarrow S^1$ such that 
\begin{equation} \label{critf_eq}
 \Sigma = \{  \{ b \} \times S^1 \times \{ \tau (b) \} \in \Delta \times S^1 \times S^1 \, | \, b \in \Delta \}. 
\end{equation}
Notice that since the image of $\sigma_0$ must be disjoint from $\Crit f$, we must have that $\tau$ maps into $S^1 - \{ 1 \}$. In particular we can write 
\begin{equation} \label{tau_sig}
\tau(0,t) = e^{2 \pi i \theta(t)}
\end{equation}
where $b= (0,t) \in \{0\} \times (0,1) = \Delta$ and $\theta$ is a smooth function with values in $(0,1)$. 
Inside $S$, we can find an open neighborhood $W \subset j^{-1}(V)$ of $\partial S$ and a real valued, smooth function $\rho: S \rightarrow [0,1]$ such that $\rho|_{W} =1$ and $\rho|_{j^{-1}(U-V)} = 0$. Now let $(s,t) \in [0,1) \times (0,1)$ be the coordinates on $S$ and define
\[ \tilde \theta (s,t) = \rho(s,t) \theta(j(0,t)), \]
where $\theta$ is as in \ref{tau_sig}.
Now define
\[ \bar \sigma(j(s,t))=(j(s,t), 1, e^{2\pi i \tilde \theta(s,t)}) \in j(S) \times S^1 \times S^1. \]
By construction we have that $\bar \sigma|_{j(S) \cap (U-V)} = \bar \sigma_0|_{j(S) \cap (U-V)}$. Therefore any lift $\sigma: j(S) \rightarrow X$ of $\bar \sigma$ will satisfy (i) and (ii). Since $S^1$ orbits are contained in the $T^2$ orbits, also (iii) must hold. 
\end{proof}
Using $\sigma$ and $\check \sigma$ as in the above lemma ensures that if we define $\tilde S^*$ and $\tilde S$ as in (\ref{tss_edge}) and (\ref{ts_edge}), then, away from a neighborhood $V$ of the edge $e$, they are defined using a fixed genuine section $\sigma_0$. We will need the following result in the proof of Theorem \ref{good_rel_pos}.
\begin{lem} 
\label{match_sigma}
Let $f: X \rightarrow U$, $U$, $\Delta$, $S$ and $j: S \rightarrow U$ be as in Lemma \ref{lift_edge}. Let $\sigma: j(S) \rightarrow X$ be a section such that $\sigma(j(\partial S)) \subseteq \Crit f $. Given $U' = D \times (0, \epsilon) \subset U$ with $\epsilon >0$ and another section $\sigma': j(S) \cap U' \rightarrow X$ such that $\sigma'(j(\partial S) \cap U') \subseteq \Crit f $, then, for any $0 < \epsilon' <\epsilon$, there exists a section $\sigma'': j(S) \rightarrow X$, satisfying $\sigma''(j(\partial S)) \subseteq \Crit f$ such that 
\[
\begin{split}
 & \sigma''|_{j(S) \cap (D \times (0, \epsilon'))} = \sigma'|_{j(S) \cap (D \times (0, \epsilon'))}  \\
 & \sigma''|_{j(S) \cap (U-U')} = \sigma|_{j(S) \cap (U-U')}
 \end{split}
 \]
\end{lem}
\begin{proof}
We use the same set-up of Lemma \ref{lift_edge}, in particular we work on the $S^1$-quotient $Y = U \times T^2$ where $T^2 = S^1 \times S^1$ and $\pi(\Crit f) = \Sigma$ is described as in (\ref{critf_eq}). In the coordinates $(s,t) \in [0,1) \times (0,1) = S$, the quotient sections $\bar \sigma$ and $\bar \sigma'$ may be described as follows
\[
\begin{split}
& \bar \sigma(j(s,t))=(j(s,t), e^{2 \pi i  \theta_1(s,t)} , e^{2 \pi i  \theta_2(s,t)}) \\
& \bar \sigma'(j(s,t))=(j(s,t), e^{2 \pi i  \theta'_1(s,t)} , e^{2 \pi i  \theta'_2(s,t)})
\end{split}
\]
where $\theta_j$ and $\theta_j'$ are smooth functions. Now let $\rho: S \rightarrow [0,1]$ be a smooth function such that $\rho|_{S \cap j^{-1}(D \times (0, \epsilon'))} = 1$ and $\rho|_{S \cap j^{-1}(U-U')} = 0$ and define
\[ \tilde \theta_j = \rho \theta'_j +  (1-\rho) \theta_j \ \ \ j=1,2 \]
We still have $\tilde \theta_2(0,t) = \tau(j(0,t))$. Define
\[ \bar \sigma''(j(s,t))=(j(s,t), e^{2 \pi i  \tilde \theta_1(s,t)} , e^{2 \pi i  \tilde \theta_2(s,t)}). \]
By construction, any lift $\sigma'': j(S) \rightarrow X$ of $\bar \sigma''$ will satisfy the lemma. 
\end{proof}

\subsection{.... at nodes.} Let us now discuss the local models for $\tilde S$ and $\tilde S^*$ at a positive node of $B$. The local model for the fibration $f: X_B \rightarrow B$ over a positive node is given in equation (\ref{eq:exp_res_1}) (with $\delta = 0$) over the local conifold $X_0$. Lemma \ref{collar_fib} shows that the local collar $Q_0$ defined in  (\ref{local_collar}), is a compactification $\tilde S$ of $\tilde S_0$ for some suitable choice of section $\sigma$. In fact, given $S$ as in the lemma and $j$ the inclusion, then $j(S)$ is compatible with condition (f) of Definition \ref{tropical_cycle}.
Moreover the conclusions $(i)$ and $(ii)$ of the lemma show that near the edges of $\Delta$, $Q_0$ is constructed as $\tilde S$ in (\ref{ts_edge}), with $\sigma$ satisfying (\ref{sigma_bound}).
Observe also that the assumption that in a neighborhood of a positive node $S$ (or $j(S)$) is as in the lemma is not restrictive. In fact, given that condition (f) of Definition \ref{tropical_cycle} must hold, the symmetry of the local model ensures that we can assume that $\partial S$ coincides with the two edges as in the lemma, moreover we can always locally isotope $S$ so that it coincides precisely with the definition of $S$ given in the lemma.  Finally, Lemma \ref{collar_pert1} says that one can find $\sigma$, satisfying (\ref{sigma_bound}), such that $\tilde S$ coincides with $Q_0$ over a neighborhood $V$ of $\Delta$ (point (i)), but is defined using a fixed section $\sigma_0$ outside a larger neighborhood $U$ (point (ii)).

Let us now construct $\tilde S^*$. Since we are working with $\check X_B$ (i.e. on the quotient of the tangent bundle of $B$ by $\Lambda$), the local model for the fibration $\check f$ in a neighborhood of a positive node of $B$  is the negative fibration on $X_0$ defined in Example \ref{ex:fibr_res_2}, with $\delta = 0$. Lemma \ref{collar2_-fib} says that the local collar $P_0$ defined in (\ref{local_collar4}) is a compactification $\tilde S^*$ of $\tilde S_0^*$ defined using a suitable section $\check \sigma$. In fact, given $S$ as in the lemma (and $j$ the inclusion) $j(S)$ is compatible with point $(f)$ of Definition \ref{tropical_cycle}. Moreover the conclusions $(i)$ and $(ii)$ of the lemma show that near the edges of $\Delta$, $P_0$ is constructed as $\tilde S^*$ in (\ref{tss_edge}).
The assumption that $S$ is as in the lemma is not restrictive, in fact we can always locally isotope $S$ into that position. Lemma \ref{collar_pert3} ensures that $\check \sigma$ can be chosen so that $\tilde S^*$ coincides with $P_0$ over a neighborhood $V$ of $\Delta$ (point (i)) and is defined using a fixed section $\sigma_0$ outside a neighborhood $U$ (point (ii)). 

We now define $\tilde S$ and $\tilde S^*$ near a negative node of $B$. To construct $\tilde S$ we work with the negative fibration $f_{\delta}$ (with $\delta = 0$) over $X_0$ given in formula (\ref{eq:exp_res_2}). 
Lemma \ref{collar1_-fib} ensures that the local collar $Q_0$ is a compactification $\tilde S$ of $\tilde S_0$ defined using a suitable section $\sigma$. First of all, the definition of $S$ in the lemma satisfies condition $(e)$ of Definition \ref{tropical_cycle}. Moreover the lemma says that $Q_0 - (Q_0 \cap \Crit f)$ coincides with $\tilde S_0$ defined in (\ref{lift_S:sm}), for some suitable section. In fact it concludes that the intersection of $Q_0$ with a smooth fibre coincides with the translation by a section of the monodromy invariant $T^2$ around the edges which bound $S$, i.e. it coincides with a translation of $\mathcal F_q$ as prescribed by condition (h) of Definition \ref{tropical_cycle}. 
Again, assuming that $S$ is locally as in the lemma is not restrictive. An argument analogous to Lemmas \ref{collar_pert1} and \ref{collar_pert2} can be used to construct $\tilde S$ so that it coincides with $Q_0$ over a neighborhood $V$ of $\Delta$ and is defined by a fixed section $\sigma_0$ outside a neighborhood $U$. 

Let us now discuss $\tilde S^*$. We need to use the positive fibration (\ref{eq:exp_res_1}). In this case Lemma \ref{collar2_fib} tells us that the local collar $P_0$ (as defined in (\ref{local_collar3})) is a compactification $\tilde S^*$ of $\tilde S^*_0$ defined by a suitable section $\check \sigma$. First of all 
$S$, as defined in the lemma, and the inclusion $j$ satisfy point $(e)$ of Definition \ref{tropical_cycle}. Moreover point (ii) of the lemma says that $P_0$ is defined as $\tilde S^*$ in (\ref{tss_edge}). Again the assumption that locally $S$ is as in the lemma is not restrictive. Lemma \ref{collar_pert2} tells us that we can construct $\tilde S$ so that it coincides with $P_0$ over a neighborhood $V$ of $\Delta$ and is defined by a fixed section $\sigma_0$ outside a neighborhood $U$. 

\subsection{... at positive vertices.} The models of $\tilde S$ and $\tilde S^*$ at a positive vertex of $B$ are similar to previous ones. 
For the definition of $\tilde S$ we use, as the local model for the fibration $f: X \rightarrow \R^3$, the positive fibration given explicitly in \S \ref{ex. (1,2)}. 
In this case, we remarked that $f$ is invariant with respect to the $T^2$ action (\ref{pos_t2_action}) and the $T^2$ orbits coincide with the monodromy invariant $T^2$.  

Let us describe the local models for $\tilde S$ and $\tilde S^*$ explicitly for this case.

\begin{lem} Let $f: X\rightarrow \R^3$ be the positive fibration described in \S \ref{ex. (1,2)}. Let 
\[ \tilde S = \{ z_1 = 0 \} \subseteq X \]
and 
\[ S = \{ x_1 = 0, x_2 \leq 0, x_3 \leq 0 \} \subset \R^3. \]
Then,  $\partial S \subseteq \Delta$, $f(\tilde S) = S$ and there exists a map $\sigma: S \rightarrow X$ such that
\begin{itemize}
\item[i)] $f \circ \sigma = \id_{S}$ and $\sigma(\partial S) \subset \Crit f$;
\item[ii)] $\tilde S = T^2 \cdot \sigma(S)$. 
\end{itemize}
\end{lem}

\begin{proof}
Clearly $\partial S$ coincides with two edges of $\Delta$ emanating from the vertex. A simple computation shows $f(\tilde S) = S$. The fibres of $f_{|\tilde S}$ over $S$ are precisely orbits of the $T^2$ action. Consider the quotient $X/T^2 \cong \C^* \times \R \times \R$, with projection $\pi$ given in (\ref{+ve_t2pi}) and the quotient fibration $\bar f$ given in (\ref{+ve_t2bf}), such that $f = \bar f \circ \pi$. Then $\pi(\tilde S)$ coincides with the image of the section $\bar \sigma: S \rightarrow \C^* \times \R \times \R$ of $\bar f$ given by $\bar \sigma (0, x_2, x_3) = (-1, x_2, x_3)$. Any lift $\sigma: S \rightarrow X$ of $\bar \sigma$ will satisfy $(i)$ and $(ii)$. 
\end{proof}

This lemma explicitly describes, at a positive vertex, a compactification $\tilde S$ of $\tilde S_0$ defined by a suitable section $\sigma$. Clearly $\tilde S$ is a $4$-dimensional submanifold. Notice also that the definition of $S$ is compatible with Definition \ref{tropical_cycle}. Also, the assumption that $S$ is as in the lemma is not restrictive. 

Let us now describe $\tilde S^*$ inside $\check X_B$. In this case we must use the negative fibration of \S \ref{ex. (2,1)} (see Theorem \ref{cekX}). It is convenient to use the topological description given at the beginning of \S \ref{ex. (2,1)}. Recall that we have $\bar Y =T^2 \times \R^2$  and that we have the ``pair of pants" $\Sigma \subset \bar Y$ which is mapped onto $\Delta$
by the projection onto $\R^2$. Let $(x_2, x_3)$ be the coordinates on $\R^2$. Then we consider $Y = \bar Y \times \R$ and we denote by $x_1$ the extra $\R$-coordinate. We identify $\bar Y$ with $\{ x_1= 0 \}$. We then consider the space $X$ with the $S^1$ action, whose quotient is $Y$ and whose fixed point set is $\Sigma$. Now let $S = \{ x_1=0, x_2 \leq 0, x_3 \leq 0 \}$. Then $\partial S \subset \Delta$ and this definition is compatible with Definition \ref{tropical_cycle}. 
Now consider a section $\check \sigma: S \rightarrow Y$ such that $\check \sigma (\partial S) \subset \Sigma$. We define $\tilde S^* = \pi^{-1}(\check \sigma( S))$, where $\pi: X \rightarrow Y$ is the quotient by the $S^1$ action. It can be easily seen that in this case $\tilde S^*$ is homeomorphic to $\C \times \R$, since the orbits of the $S^1$ action collapse to points over $\Sigma$.

Again, $\tilde S$ and $\tilde S^*$ can be perturbed so that, away from a neighborhood of $\partial S$,  they coincide with $\tilde S_0$ and $\tilde S_0^*$ constructed from a fixed section (see Lemmas \ref{collar_pert1} and \ref{lift_edge}). Moreover the specific definitions of $S$ are not restrictive, since one can assume that up to local isotopy, near a positive vertex, $S$ is as given.

\subsection{... at negative vertices.} \label{trop_cycl_negv} We now describe models for $\tilde S$ and $\tilde S^*$ near a boundary vertex $p \in S$ (Definition \ref{tropical_domain}, condition $(e)$). Then, $j(p)$ is a negative vertex of $B$ (point $(d)$ of Definition \ref{tropical_cycle}). To construct $\tilde S$ inside $X_B$, we use as the local model for the fibration, the negative fibration of \S \ref{ex. (2,1)}. To construct $\tilde S^*$ inside $\check X_B$ we use the positive fibration of \S \ref{ex. (1,2)}. 

Let us start describing $\tilde S$. Let us consider the topological description of the negative fibration $f: X \rightarrow \R^3$ given at the beginning of \S \ref{ex. (2,1)}. Then let
\[ j(S) = \Delta \times \R_{\geq 0 } \subseteq \R^3. \]
Since $\Delta$ is homeomorphic to a tropical line $V^1$, we have that $(0,0,0) \in j(S)$ is the image of a boundary vertex (here we assume that $(0,0)$ is the vertex of $\Delta$). Now recall that $\Sigma \subset \bar Y$ is the ``pair of pants'' which is mapped by $\bar f$ to $\Delta$ and that $\bar f^{-1}(\Delta_i)$ is the cylinder $S^1 \times \Delta_i$,  where $S^1 \subset T^2$ represents the class $-e_3$, $-e_2$ and $e_2 + e_3$ when $i = 1,2,3$ respectively. If $\pi: X \rightarrow Y$ is the $S^1$ quotient defining the space $X$, then we define the lift $\tilde S \subset X$ of $j(S)$ to be 
\[ \tilde S = \pi^{-1}( \Sigma \times \R_{\geq 0}). \]
We have that $\tilde S$ is homeomorphic to $\Sigma \times D$, where $D$ is a disc in $\R^2$. Now consider 
$f_{| \tilde S}: \tilde S \rightarrow j(S)$ and let $q \in \Delta_1 \times \R_{>0} \subset j(S)$. Notice that $f_{| \tilde S}^{-1}(q) = S^1 \times S^1$, where the first factor is a circle representing $-e_3$ and the second factor is a circle representing $e_1$, the class of an orbit of the $S^1$-action. These two cycles are precisely the monodromy invariant ones with respect to monodromy around $\Delta_1$. Similarly we can argue about $f_{| \tilde S}^{-1}(q) = S^1 \times S^1$, with $q \in  \Delta_j \times \R_{>0}$ and $j=2,3$. 
Thus $f_{|\tilde S}^{-1}(\Delta_i \times \R_{\geq 0})$ coincides with the construction of $\tilde S$ done in the case of the smooth boundary points in $\Delta_i$. Notice moreover that for all $q \in \Delta_i \times \R_{>0}$, which are sufficiently away from $\Delta_i \times \{ 0 \}$, we can assume that $f_{| \tilde S}^{-1}(q) \cong T^2$ is parallel to the unique linear subspace of $f^{-1}(q)$ which is monodromy invariant with respect to monodromy around $\Delta_i$. We can also perturb $\tilde S$, so that away from $j(\partial S)$ it coincides with a lift $\tilde S_0$ constructed from a fixed section. Observe that the points on $(0,0) \times \R_{>0} \subset j(S)$ are images of interior edge points of $S$. A more detailed description of $\tilde S$ over such points will be given in \S \ref{grel_proof}.

Let us now construct $\tilde S^*$, where we use the positive fibration $f: X \rightarrow \R^3$ as the model fibration. In this case, $f$ is $T^2$ invariant with respect to the action (\ref{pos_t2_action}). 
We let $j(S) = \R_{\geq 0} \times \Delta$ and let $\check \sigma: j(S) \rightarrow X$ be a section such that $j(\partial S) \subset \Crit f$.
 Notice that $j(\partial S) = \Delta$ and $\Crit f$ consists of the union of the sets $L_j= \{ z_i=0, i \neq j \}$, with $j=1,2,3$. Each of these is mapped to one of the legs of $\Delta$.
 Denote by $\Delta_j$  the leg of $\Delta$ which is the image of $L_j$. For each $j=1,2,3$, denote by $G_j \cong S^1$ the stabilizer of the points in $L_j$ with respect to the $T^2$ action. For instance, $G_3 = \{ \xi_1 \xi_2 = 1 \}$.
Now define
\[ \tilde S^*_j = \bigcup_{p \in \R_{\geq 0} \times \Delta_j} G_j \cdot \check \sigma(p). \]
It can be checked that, since the orbits of $G_j$ collapse to single points on $L_j$, we have $\tilde S^*_j \cong D \times \R_{\geq 0}$. Where $D \times \{ 0 \}$ corresponds to the union of the $G_j$-orbits of points $\check \sigma(p)$ for all $p \in \R_{\geq 0} \times \{(0,0)\}$. 
Observe that there is a suitable choice of orientations of the $G_j$'s such that, in $H_1(T^2, \Z)$, the classes $[G_j]$ satisfy $[G_1]+[G_2]+[G_3] = 0$. Let $Q \subset T^2$ be a $2$-chain such that $\partial Q =[G_1]+[G_2]+[G_3]$ and consider the following closed subset of $X$:
\[ C = \bigcup_{p \in \R_{\geq 0} \times \{(0,0) \} } Q \cdot \check \sigma(p) \]
Define
\[ \tilde S^* = C \cup \tilde S^*_1 \cup \tilde S^*_2  \cup \tilde S^*_3. \]
Observe that $Q$ can be chosen so that $\tilde S^*$ is a submanifold homeomorphic to $\R^3$. This can be seen as follows. One can choose $Q$ homeomorphic to a standard $2$-simplex with the vertices identified (see Figure \ref{algae} (a)). Then, since the $T^2$ orbit of $\check \sigma(0,0,0)$ collapses to a point, we have that $C$ is homeomorphic to the cone of $Q$, i.e. to $\R_{\geq 0} \times Q$ after the subset $\{0\} \times Q$ is collapsed to a single point. One can then see that after attaching to this space the sets $\tilde S^*_j$
we obtain something homeomorphic to $\R^3$. 

\subsection{Pair of pants.} Before engaging in the proof of Theorem \ref{good_rel_pos} we generalize the latter argument and give a slightly more general ``pair of pants'' construction.  Let $v_1, v_2, v_3 \in \R^2$ be primitive integral vectors, spanning $\R^2$, such that
\begin{equation} \label{bal}
 v_1 + v_2 +v_3 = 0. 
\end{equation}
Given $T^2 = \R^2 / \Z^2$, for each $j=1,2,3$ the subspace $\R v_j$ covers a subgroup $G_j \cong S^1 \subseteq T^2$, with orientation induced by $v_j$.   Notice that since all $v_j$'s are primitive, under the canonical isomorphism $\Z^2 \cong H_1(T^2, \Z)$, the subgroups $G_j$ represent the class $v_j$. Let $V^{1} \subseteq \R^2$ be the tropical line as defined at the beginning of \S \ref{tr_2_cy} and denote by $p = (0,0)$ its vertex and by $D_1$, $D_2$ and $D_3$ the three edges of $V^1$ emanating from $p$, indexed in anti-clockwise order.  Let $X = \R^2 \times T^2$ and consider the following subset
of $X$: \[  \Sigma_0 = \bigcup_{j=1}^{3} D_j \times G_j. \]
We have the following
\begin{lem} \label{pair_pants}
There exists $Q \subset \{ p \} \times T^2$ such that
\[ \Sigma = \Sigma_0 \cup  Q  \]
is an embedded piecewise smooth submanifold of $X$.
\end{lem}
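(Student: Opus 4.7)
The plan is to construct $Q$ by lifting to the universal cover of $T^2$ and then to verify that $\Sigma$ is a topological $2$-manifold by analyzing the local structure, the only subtle case being the apex $(p, 0)$.

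First, I would take the $2$-simplex $\tilde Q \subset \R^2$ with vertices $A = 0$, $B = v_1$, $C = v_1 + v_2 = -v_3$; all three vertices are lattice points and the edges $AB$, $BC$, $CA$ are parallel to the primitive integral vectors $v_1, v_2, v_3$ respectively. Under the projection $\pi \colon \R^2 \to T^2 = \R^2 / \Z^2$, each edge maps onto the corresponding circle $G_j$ exactly once (by primitivity of $v_j$), while all three vertices map to the single point $0 \in T^2$. I would set $Q = \{p\} \times \pi(\tilde Q) \subset \{p\} \times T^2$; by construction $\partial Q = G_1 \cup G_2 \cup G_3$ as a set, matching the boundary of $\Sigma_0$. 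The fact that $\tilde Q$ closes up is precisely the balancing condition $v_1 + v_2 + v_3 = 0$.

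Next I would verify that $\Sigma = \Sigma_0 \cup Q$ is a topological $2$-manifold. Away from $(p, 0)$ this is routine: at an interior point of $D_j \times G_j$ or of $Q$, $\Sigma$ is smooth; at a point $(p, q)$ with $q \in G_j \setminus \{0\}$, $\Sigma$ consists of the half-plane $D_j \times G_j$ glued to one sector of $Q$ along a common segment of $G_j$, giving the local model $\R^2$.

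The main obstacle is the analysis at the apex $(p, 0)$. I would compute the link of $(p, 0)$ in $\Sigma$, namely the intersection $\Sigma \cap S^3_\varepsilon$ with a small sphere centered at $(p, 0)$. Writing $C = (\{p\} \times T^2) \cap S^3_\varepsilon$ for the equator, the link of $\Sigma_0$ consists of three transverse arcs running across $S^3_\varepsilon$, each joining the antipodal pair of points on $C$ in the directions $\pm v_j$; the link of $Q$ consists of three arcs lying on $C$, one for each vertex of $\tilde Q$, spanning the three interior angles of $\tilde Q$ translated to the origin. The six directions $\{\pm v_1, \pm v_2, \pm v_3\}$ partition $C$ into six arcs, and I would verify that the balancing condition forces the $v_j$'s and the $-v_j$'s to alternate cyclically around $C$: for any two positive directions $v_i, v_j$ consecutive in the counterclockwise ordering, $-v_k = v_i + v_j$ necessarily lies in the short arc between them, so no two positive (or two negative) directions can be adjacent. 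It follows that the three ``sector'' arcs contributed by $Q$ alternate with three ``gap'' arcs on $C$, and the chain sector arc $\to$ transverse arc $\to$ sector arc $\to \cdots$ traverses all six pieces and closes up after one loop. Hence the link is homeomorphic to $S^1$, so $\Sigma$ is a $2$-manifold at $(p, 0)$; the piecewise smooth structure on $\Sigma$ is inherited from the smooth pieces $D_j \times G_j$ and $Q$, with creases occurring exactly along the boundary circles $\{p\} \times G_j$.
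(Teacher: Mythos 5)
Your construction of $Q$ works only when $\{v_1,v_2\}$ is a $\Z^2$-basis, and the lemma's whole difficulty lies in the case when it is not. If $v_1,v_2$ generate a sublattice of index $m>1$ (the paper's own example is $v_1=(1,0)$, $v_2=(2,5)$, so $m=5$), then your triangle $\tilde Q=\conv(0,\,v_1,\,v_1+v_2)$ has area $m/2>1$ and the projection $\pi\colon\R^2\to T^2$ is \emph{not} injective on its interior: e.g.\ $(0.7,1)$ and $(1.7,2)$ both lie in the interior of $\tilde Q$ and differ by a lattice vector. Worse, by Pick's theorem $\tilde Q$ has $(m-1)/2$ interior lattice points, so $\pi(\operatorname{int}\tilde Q)$ contains full open neighborhoods of $0\in T^2$ and of interior points of the circles $G_j$ (for $m=5$, the point $(0.7,1)\in\operatorname{int}\tilde Q$ maps into $G_1$). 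Consequently the set $Q=\pi(\tilde Q)$ does not have topological boundary $G_1\cup G_2\cup G_3$, and at a point of $G_j$ lying in $\pi(\operatorname{int}\tilde Q)$ the union $\Sigma_0\cup Q$ is a full disk in $\{p\}\times T^2$ with the half-plane $D_j\times G_j$ attached along an interior arc --- not a $2$-manifold. Your link computation at the apex breaks down for the same reason: near $0$ the image of $\tilde Q$ consists of more than the three sectors you account for.

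The paper's proof repairs exactly this. It first disposes of the basis case with your triangle (noting only that the cyclic ordering of the $v_j$'s makes the choice of half-square canonical). In the index-$m$ case it introduces the finer lattice $N=\Z\hat v_1+\Z\hat v_2$ with $\hat v_i=m^{-1}v_i$, for which $\{\hat v_1,\hat v_2\}$ \emph{is} a basis, builds the embedded triangle $Q'$ in the smaller torus $T'=\R^2/N$, and sets $Q=\pi^{-1}(Q')$ for the degree-$m$ covering $\pi\colon T^2\to T'$. Being the preimage of an embedded surface-with-boundary under a covering map, this $Q$ is genuinely embedded in $T^2$, and one checks $\pi^{-1}(G_j')=G_j$ so that $\partial Q=\bigcup_j G_j$ as required. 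To fix your argument you would need to replace your $Q$ by this pulled-back region (or otherwise produce an embedded $2$-chain with the correct boundary); the rest of your local analysis, including the alternation of the rays $\pm v_j$ around the link of the apex, is sound and is essentially what the paper leaves implicit.
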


\begin{figure}[!ht] 
\begin{center}
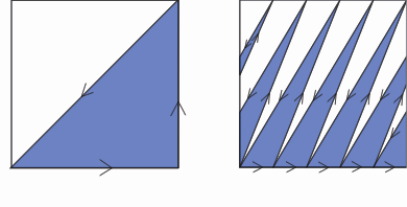
\caption{The region $Q$ (dark): (a) when $\{v_1, v_2 \}$ is a $\Z^2$-basis, (b) when it is not. } \label{algae}
\end{center}
\end{figure}
\begin{proof}
Notice that (\ref{bal}) implies that there exists a chain complex $Q$ such that $\partial Q = \cup_j G_j$. We want to show that $Q$ can be chosen so that $\Sigma$ is a (topological) manifold. 
If $\{v_1, v_2 \}$ forms a  $\Z^2$-basis, choose $Q$ to be the triangle pictured in Figure \ref{algae} $(a)$. Then  $\Sigma$ is a piecewise smooth submanifold of $X$. Notice that in principle one could also choose a different triangle, i.e. the other half of the square, but in this case its orientation should be opposite to the one induced by the choice of ordering of the $v_j$'s. In particular the ordering allows a canonical choice of $Q$. 

Let us now discuss the case where $\{ v_1, v_2 \}$ does not form a $\Z^2$-basis. In Figure \ref{algae} we have pictured the choice of $Q$ for $v_1 = (1,0)$ and $v_2 = (2,5)$. Notice that in this case $v_3= (-3, -5)$ is also primitive, as required in the hypothesis. Suppose that  $v_1$ and $v_2$ generate a sublattice of $\Z^2$ of index $m$. Let $N$ be the lattice generated by $\hat{v}_1 = m^{-1}v_1$ and $\hat{v}_2 = m^{-1}v_2$ and let $T'= \R^2/N$. Since $\Z^2 \subset N$, the identity of $\R^2$ induces a covering $\pi: T^2 \rightarrow T'$ of degree $m$. Let $\hat{v}_3 = - \hat{v}_1 - \hat{v}_2$ and let $G'_j \subseteq T'$ be the group covered by $\R \hat v_j$. Then, since $\{ \hat v_1, \hat v_2 \}$ forms a basis of $N$, we can choose the triangle $Q'$ such that $\partial Q' = \cup_j G'_j$ as above. Now let $Q = \pi^{-1}(Q')$. It can be checked that $\partial Q =  \cup_j G_j$. Moreover $\Sigma$, defined with this choice of $Q$, is a piecewise smooth submanifold of $X$.   
\end{proof}
 
\subsection{Proof of Theorem \ref{good_rel_pos}} \label{grel_proof} In the previous discussion we have shown how the zero sections of $f$ and $\check f$ can be perturbed in a neighborhood of $j(\partial S)$  so that $\tilde S$ and $\tilde S^*$ can be constructed over points on $j(\partial S)$. Now we need to show how $\tilde S$ and $\tilde S^*$ can be defined over the remaining points of $j(S)$.
We have the following cases: $p$ is one of the smooth points $q_1, \ldots, q_r$ in condition $(b)$ of Definition \ref{tropical_cycle}; $p$ is an interior vertex of $S$; $p$ is an interior edge point of $S$. 
In all these cases we will define a suitable closed subset $Q_p$ (resp. $Q^*_p$) of the fibre $f^{-1}(j(p))$ (resp. $\check f^{-1}(j(p))$) such that 
\[ \tilde S_0 \cup \left( \cup_{p} Q_p \right)  \ \ \ ( \text{resp.} \  \tilde S^*_0 \cup \left( \cup_{p} Q^*_p \right) ) \]
is a submanifold. 

\medskip

{\it Step 1: constructing $\tilde S^*$ over interior edge points.} Let us take an interior edge $e \subset S$ and a point $p \in e$. We apply point $(g)$ of Definition \ref{tropical_cycle}. Given some orientation on $e$ and a small connected neighborhood $U$ of $p$ in $S$, $j(U \cap S_{sm})$ has three connected components indexed with a unique cyclic order.  Moreover, we have the vector fields $v_1, v_2$ and $v_3$ satisfying the balancing equation. In the following argument we assume that all $\epsilon_k$'s in the balancing equation are $1$. In the general case replace $v_j$ with $-v_j$ whenever $\epsilon_j =-1$. 

Let us first define $Q^*_p$. Inside $\check f^{-1}(j(p)) = T_{j(p)}B / \Lambda_{j(p)}$, the span of the vectors $v_1, v_2$ generates a two torus $T$ and we may consider its subgroups $G_k$, $k=1,2,3,$ generated by the $v_k$'s. 
Notice that if we parallel transport $G_k$ to a point $q$ of the $k$-th component of $j(U \cap S_{sm})$, then $G_k = \mathcal V_q/ \mathcal V_q \cap \Lambda_q$. 
The balancing condition is equivalent to saying that in $H_1(T, \Z^2)$ we have $[G_1] + [G_2] + [G_3] = 0$, where the cycles are oriented by the $v_k$'s. Then we may define $Q^*_p \subset T$ as in the proof of Lemma \ref{pair_pants}. 
Notice that the choice of $Q_p^*$ is canonical, given by the ordering of the $v_k$'s. Notice also that it is independent of the choice of orientation of the edge $e$.
In fact, changing the orientation of $e$ inverts the cyclic ordering but also changes all the signs $\epsilon_k$ appearing in the balancing condition and hence the orientations of the $G_k$'s. 
It is easy to see that attaching $Q^*_p$ to $\tilde S^*_0$ for all interior edge points $p$, produces a submanifold.  

\medskip 

{\it Step 2: construct $\tilde S$ over interior edge points.} Now define $Q_p$. Inside  $f^{-1}(j(p))$ we have a triple of $2$-tori $T_1$, $T_2$ and $T_3$ defined by $v_1$, $v_2$ and $v_3$ via the bundle $\mathcal F$.
 Recall that both fibres $\check f^{-1}(j(p))$ and $f^{-1}(j(p))$ are canonically oriented. Let $w \in T^*_pB$ be primitive and integral, such that $w(v_1) = w(v_2) = 0$. 
The sign of $w$ can be chosen by imposing that $\inner{w}{v} > 0$ for all $v \in T_{j(p)}B$ such that $\{v_1, v_2, v \}$ forms an oriented $\R$-basis of $T_{j(p)}B$. Notice that $\R w / \Z w = T_1 \cap T_2 \cap T_3$. 
The vector $v_k$ and the orientation on $f^{-1}(j(p))$ induce an orientation on $T_k$. Now let $T$ be the two torus obtained as the quotient of $f^{-1}(j(p))$ by  $\R w / \Z w$. 
Then $T_k$ is mapped to a subgroup $G_k \subset T$, with an orientation. Again, the balancing condition implies $[G_1] + [G_2] + [G_3] = 0$ in $H_1(T, \Z^2)$. 
Then, as in the proof of Lemma \ref{pair_pants}, we can construct $Q'_p$ such that $\partial Q'_p = \cup_k G_k$. Define $Q_p \subset f^{-1}(j(p))$ to be the pre-image of $Q'_p$ via the quotient map $f^{-1}(j(p)) \rightarrow T$.  Then clearly $\partial Q_p = \cup_k T_k$.
 It is not difficult to show that attaching $Q_p$ to $\tilde S_0$ for all interior edge points $p$ we obtain a submanifold. 

\medskip

{\it Step 3: the constructions over interior edge points and over boundary vertices match on overlaps.} In case the interior edge ends on a boundary vertex, we should show that the constructions in Steps 1 and 2 match with the construction in a neighborhood of a boundary vertex given in \S \ref{trop_cycl_negv}. In the case of $\tilde S^*$, by parallel transport, the vector fields $v_1$ and $v_2$ can be defined on a neighborhood $U$ of the edge $e$ and they generate a $T^2$ action on $\check f^{-1}(U)$.  The torus $T$ mentioned in Step 1 is an orbit of this $T^2$ action. If $e$ ends on a boundary vertex it can be seen that this $T^2$ action coincides with the one used to construct $\tilde S^*$ in \S \ref{trop_cycl_negv}. Moreover, also the subgroups $G_k$, defined above, coincide with subgroups $G_k$ in \S \ref{trop_cycl_negv}. Therefore the two constructions coincide.

In the case of $\tilde S$, the two constructions do not strictly coincide, but they are isotopic. Therefore they can be made to coincide by using this isotopy along the edge $e$. 
First of all observe that the three tori $T_1, T_2$ and  $T_3$ near a boundary vertex must coincide with the monodromy invariant ones around each leg of $\Delta$. 
In \S \ref{trop_cycl_negv} we had $j(S) = \Delta \times \R_{\geq 0}$ and interior edge points of $j(S)$ are of type $q =(0,0,t)$ where $(0,0)$ is the vertex of $\Delta$ and $t > 0$. 
Also we had $\tilde S = \pi^{-1}( \Sigma \times \R_{\geq 0})$, where $\Sigma$ is the ``pair of pants" constructed in \S \ref{ex. (2,1)}. 
The main difference between the construction of $\tilde S$ given above and the one given in \S \ref{trop_cycl_negv} is precisely in the way we defined $\Sigma$. 
In fact, in \S \ref{ex. (2,1)} the legs of $\Sigma$ are glued together so that, if $b_0$ is the vertex of $\Delta$, then $\Sigma \cap (T^2 \times \{ b_0 \})$ is a ``figure eight''. 
We could make a different choice, e.g. we could assume that the three circles, forming the legs of $\Sigma$, come together at $T^2 \times \{b_0 \}$ as linear subspaces. 
Then we could assume that  $\Sigma \cap (T^2 \times \{ b_0 \})$ is some closed set $Q'$, such as a triangle or a pair of triangles. The main point is that any such choice would be equivalent up to isotopy and one can choose $Q'$ so that we obtain the same construction of $\tilde S$ as described above. Notice also that the $S^1$ action coincides with the action of $\R w / \Z w$. In particular one can use the isotopy to interpolate the two constructions as we move away from a boundary vertex.

\medskip

{\it Step 4: constructing $\tilde S^*$ over an interior vertex.} Now let $p \in S$ be interior vertices. Let us define $Q^*_p$. There are four interior edges of $S$ meeting at $p$. Moreover, given small connected neighborhood $U$ of $p$ in $S$, $j(U \cap S_{sm})$ has $6$ connected components. Let us orient each interior edge emanating from $p$ in the direction moving away from $p$.  Let $v_1, \ldots, v_6$ denote the vector field $v$ restricted to the six components of $j(U \cap S_{sm})$. We can assume that the indices have been chosen in such a way that the following ordered sets $\{ v_1, v_2, v_3 \}$, $\{ v_3, v_4, v_5 \}$, $\{ v_1, v_5, v_6 \}$ and $\{v_2, v_6, v_4 \}$ correspond to the triples meeting at each one of the four interior edges, ordered according to the cyclic ordering imposed by their orientations. Without loss of generality, one can see that the sign rule in the balancing condition gives the following equations
\[ v_1 + v_2 +v_3 = 0, \ \ -v_3 -v_5 - v_4 = 0, \ \ -v_1+v_6+v_5 = 0, \ \ -v_2 + v_4 - v_6 = 0. \]
Now let $Q^*_1$, $Q^*_2$, $Q^*_3$ and $Q^*_4$ denote the four subsets of $\check f^{-1}(j(p))$ constructed as above from these triples of vectors. These are obviously the limits of the sets $Q^*_{p'}$ constructed along the four edges as $p'$ approaches the vertex $p$.
Using point $(j)$ of Definition \ref{tropical_cycle} we can assume that $v_1, v_2$ and  $v_4$ are linearly independent.
 If $v_1$, $v_2$ and $v_4$ are a $\Z$-basis for $\Lambda_p$ then it can be calculated that $\cup_k Q^*_k$ is the boundary of a $3$-simplex immersed in $\check f^{-1}(j(p))$. In this case we define $Q^*_p$ to be this $3$-simplex. More generally one can use an argument similar to the one used in the proof of Lemma \ref{pair_pants} to find a suitable $Q^*_p$ such that  $\partial Q^*_p = \cup_k Q^*_k$.   
One can show that attaching to $\tilde S_0^*$ this $Q^*_p$ and the sets $Q^*_{p'}$ as above for all interior edge points,  we obtain a submanifold. 

\medskip

{\it Step 5: constructing $\tilde S$ over interior vertices.}
Now let us define $Q_p$. For each point on the four edges emanating from $p$, the previous construction gave a  $3$-chain whose boundary is the union of the three $2$-tori corresponding to that edge. 
These four $3$-chains come together at the point $p$ as subsets of $f^{-1}(j(p))$. Denote them by $Q_j$, $j=1, \ldots, 4$ and define $Q_p = \cup_{j=1}^{4} Q_j$. It can be verified that attaching to $\tilde S_0$ this $Q_p$ and the sets $Q_{p'}$ as above for all interior edge points $p'$,  we obtain a submanifold. 

\medskip

{\it Step 6: constructing $\tilde S$ over the points $q_1, \ldots, q_r$.} Let $p$ be one of the points $\{ q_1, \ldots, q_r \}$ of point $(b)$ of Definition \ref{tropical_cycle}. Let us define $Q_p$.  Given the condition on the monodromy of $\mathcal F$ around $p$ (condition ($i$) of Definition \ref{tropical_cycle}) it is natural to guess that $Q_p$ must be an $I_1$ fibre. This can be shown as follows. Let us use the construction in \S \ref{ex. (2,2)} of the generic-singular fibration over an edge of $\Delta$. Then the base of the fibration is $U=D \times (0,1)$ and the quotient of $X$ by the $S^1$ action is $Y = U \times T^2$. Let $e_2, e_3 \in H_1(T^2, \Z)$ and $\Sigma$ be defined as in \S \ref{ex. (2,2)}. If $e_1$ is the orbit of the $S^1$ action, then monodromy of $f$ is given by (\ref{eq matrix g}) in the basis $e_1, e_2, e_3$ of $H_1(X_b, \Z)$. Now let $\{ h_1,h_2 \}$ be an integral basis of $\mathcal F$ with respect to which monodromy of $\mathcal F$ around $p$ has the given form. Then it can be shown that $h_2$ represents the class $e_1$ and $h_1$ represents the class $e_2 + a e_3$ for some $a \in \Z$. 
Then by change of basis of $H_1 (T^2, \Z)$, w.l.o.g. we may as well assume that $h_1$ represents $e_2$. We may also assume that $j(S) \cap U = D \times \{1/2 \}$. If we consider a circle $S^1 \subseteq T^2$ representing the class $e_2$, then we may define 
$\tilde S = \pi^{-1}(D \times \{1/2 \} \times S^1)$, where $\pi: X \rightarrow Y$ is the projection with respect to the $S^1$ action. Then $\tilde S$ has an $I_1$ fibre over $p = (0, 1/2) \in D \times (0,1) = U$ and it is a submanifold. 

\medskip

{\it Step 7: constructing $\tilde S^*$ over points $q_1, \ldots, q_r$.} To define $Q_p^*$, we use the same model for the generic singular fibration, but since we are working on the tangent bundle one can see that condition $(i)$ of Definition \ref{tropical_cycle} implies that the vector field $v$ corresponds to the class $e_3$. In particular $v$ is monodromy invariant. This implies that $v$ induces an $S^1$ action on $\check f^{-1}(U)$ where $U$ is a neighborhood of $j(p)$. Clearly, away from $p$, the circles $\mathcal V_q / \mathcal V_q \cap \Lambda_q$ defining $\tilde S_0^*$ are orbits of this $S^1$ action. We define $Q^*_p$ to be the orbit of $\sigma_0(j(p))$ with respect to this $S^1$ action, where $\sigma_0$ is the zero section. Clearly attaching  $Q^*_p$ to $\tilde S_0^*$ gives a submanifold. 

\medskip

{\it Step 8: matching up the constructions.} We complete the argument with a remark on how to match the constructions of $\tilde S$ and $\tilde S^*$ on the overlaps between the various open sets. 
The local fibrations and the zero section $\sigma_0$ match by construction (see Corollary \ref{fibr_conifold}). 
In all local constructions, for all points $b$ on the overlaps,  $f^{-1}(b) \cap \tilde S$ (resp. $\check f^{-1}(b) \cap \tilde S^*$) is an affine subspace of the fibre uniquely prescribed by the vertor field $v$ and by a choice of section
$\sigma$ (resp $\check \sigma$). Therefore we only need to match the sections $\sigma$ (resp. $\check \sigma$). 
In all local constructions, $\sigma$ and $\check \sigma$ where chosen to coincide with the zero section away from 
a small neighborhood $V$ of $\Delta$. So on $B-V \subset B_0$ we define $\tilde S$ and $\tilde S^*$ using the zero section. The remaining case to check is on the overlap between a neighbourhood of an edge and a neighborhood of a vertex or a node. Let $f: X \rightarrow U$ be the local model for the fibration along an edge of $\Delta$. In Lemma \ref{lift_edge} we have constructed  $\sigma: S \cap U \rightarrow X$, defining $\tilde S$ (or equivalently $\tilde S^*$) along this edge. Now, $U$ may overlap with some other open set where we have a fibration over a vertex or node (or maybe another edge).  
We have $U = D \times (0,1)$ and assume that, for some $\epsilon > 0$, the open set $U' = D \times (0, \epsilon)$ is the portion of $U$ which overlaps 
with an edge emanating from a node or vertex. Over $U'$ we have another section $\sigma': U' \cap S \rightarrow X$ which defined $\tilde S$ over the vertex or node. By construction $\sigma'$ satisfies $\sigma'(U' \cap \partial S) \subset \Crit f \cap f^{-1}(U')$. In Lemma \ref{match_sigma} we have constructed a section $\sigma'': S \rightarrow X$ which interpolates $\sigma$ and $\sigma'$. Replace $\sigma$ with $\sigma''$, so that now the constructions match. Since both $\sigma'$ and $\sigma$ coincide with $\sigma_0$ outside some neighborhood $V$ of $\Delta \cap U$, by construction the same will be true of $\sigma''$.
\section{Simultaneous resolutions/smoothings of nodes} \label{simult_res_sm}
We discuss the problem of simultaneously resolving a set of nodes in a tropical conifold $(B, \mathcal P, \phi)$. More precisely, given a set of nodes in $B$ and hence in $X_B$, the topological conifold associated to $B$, we want to construct a new tropical manifold $(B', \mathcal P', \phi')$ whose topological compactification $X_{B'}$ is homeomorphic to the resolution of the corresponding nodes in $X_B$. To achieve this we need to change the affine structure on $B$ so that a neighborhood of the node is replaced by a neighborhood which is (locally) affine isomorphic to its tropical resolution described in \S \ref{trop_res_+ve} and \S \ref{trop_res_-ve}. This is not a local problem. In general, we should expect global obstructions. Intuitively, resolving a node implies the insertion of an extra $2$-dimensional face (in the positive case) or an extra $3$-dimensional polytope (in the negative case), causing a modification of nearby polytopes which propagates away from the node. 

As we have discussed in \S \ref{trop_res_+ve} and \S \ref{trop_res_-ve}, the local resolution of a node can be achieved by smoothing the mirror node and then applying discrete Legendre transform. Therefore, the problem of tropically resolving a set of nodes is equivalent to the one of tropically smoothing the mirror ones. In this sense the phrase ``simultaneously resolving/smoothing a set of nodes'' also means simultaneously resolving the nodes on $B$ \emph{and} smoothing the mirror nodes on $\check B$. In Theorem \ref{good_rel_pos} we have proved that the existence of tropical $2$-cycles in $B$ containing the nodes guarantees that the obstructions to resolve the nodes in $X_{B}$ and to smooth the mirror ones in $X_{\check B}$ vanish simultaneously. This suggests the idea that the existence of tropical $2$-cycles could imply the vanishing of the obstructions to the tropical resolution of nodes.

So, let  $p_1, \ldots, p_{k+s}$ be a set of nodes of $\check B$, where $p_1, \ldots, p_k$ are negative and $p_{k+1}, \ldots, p_{k+s}$ are positive.  The negative ones are the barycenters of square $2$-dimensional faces $e_1, \ldots, e_k$ and the positive ones are barycenters of $1$-di\-men\-sio\-nal edges $\ell_{k+1}, \ldots, \ell_{k+s}$. Then, to simultaneously smooth these nodes we want to do the following:
\begin{itemize}
\item[a)] find a refinement $\check{\mathcal P}'$ of $\check{\mathcal P}$, which is a toric subdivision, inducing a diagonal subdivision of $e_1, \ldots, e_k$ and the barycentric subdivision of $\ell_{k+1}, \ldots, \ell_{k+s}$;
\item[b)] define a suitable fan structure, compatible with $\check{\mathcal P}'$, at the barycenters of the edges $\ell_{k+1}, \ldots, \ell_{k+s}$ which has the effect of changing the discriminant locus as in \S \ref{trop_res_-ve};
\item[c)] construct an MPL-function $\check \phi'$, strictly convex with respect to $\check {\mathcal P}'$.
\end{itemize} 
This has to be done so that we obtain a new tropical manifold (or conifold, if there are other nodes) $(\check B', \check {\mathcal P}', \check \phi ')$ and the corresponding manifold (or conifold) $X_{\check B'}$ is homeomorphic to the smoothing of $X_{\check B}$ at the nodes corresponding to $p_1, \ldots, p_{k+s}$. 
If this can be done, then the mirror $(B', \mathcal P', \phi')$ of $(\check B', \check {\mathcal P}', \check \phi ')$ gives the simultaneous resolution of the mirror nodes. We remark that in point $(a)$ the subdivision of the edges $\ell_{k+1}, \ldots, \ell_{k+s}$ does not have to be necessarily barycentric, it may simply be a subdivision given by adding one vertex at an interior integral point.

\subsection{Related nodes.}
Let $p$ be a node in a tropical conifold and consider two tropical $2$-cycles $(S_1, j_1, v_1)$ and $(S_2, j_2, v_2)$. To avoid cumbersome notation we identify $S_k$ with its image $j_k(S_k)$. If $p \in S_1 \cap S_2$,  the vector fields $v_1$ and $v_2$ are always monodromy invariant in a neighborhood of $p$, therefore, by parallel transport, they can be compared. In the case $p$ is a positive node, they are either equal or opposite to each other. When $p$ is a negative node, we have two cases. If $	\partial S_1$ and  $\partial S_2$ coincide near $p$, then $v_1$ and $v_2$ are either equal or opposite. If $\partial S_1$ and $\partial S_2$ intersect transversally in $p$, then $v_1$ and $v_2$ form a basis of the monodromy invariant plane which locally contains $\Delta$. In the following the orientation on $\partial S_k$ is induced from the orientation on $S_k$.
\begin{defi} \label{coeff_eps} Let $S_1$ and $S_2$ be tropical $2$-cycles an let $p$ be a node. We can uniquely define a coefficient  $\epsilon_{S_1 S_2}(p)$ with the following properties. If either $S_1$ or $S_2$ does not contain $p$, then $\epsilon_{S_1 S_2}(p)= 0$. Assume $p \in S_1 \cap S_2$. When $p$ is positive, $\epsilon_{S_1 S_2}(p) = 1$ in the following cases
\begin{itemize}
\item[1)] $\partial S_1$, $\partial S_2$ and their orientations coincide in a neighborhood of $p$ and $v_1=v_2$;
\item[2)] $S_1$, $S_2$ and their orientations are as in Figure \ref{canc} (a) and (b) and $v_1=v_2$;
\end{itemize}
Now assume $p$ is negative. If $\partial S_1$ and $\partial S_2$ intersect transversally in $p$, then their orientations and their ordering (according to their index) induces an orientation of the monodromy invariant plane containing $p$. Then $\epsilon_{S_1S_2}(p)= 1$ in the following cases
\begin{itemize}
\item[3)] $\partial S_1$ and $\partial S_2$ intersect transversally in $p$ and the orientation they induce on the monodromy invariant plane is opposite to the orientation induced by  $v_1 \wedge v_2 $;
\item[4)]$\partial S_1$, $\partial S_2$ and their orientations coincide in a neighbourhood of $p$ and $v_1=v_2$.
\end{itemize}
All other cases are uniquely determined by the property that $\epsilon_{S_1S_2}(p)$ changes sign if we either change the orientation of one of the $S_j$'s or change the sign of one of the $v_j$'s. Notice that 
$\epsilon_{S_1S_2}(p) = \epsilon_{S_2S_1}(p)$ and $\epsilon_{S_1S_1}(p) =1$ if and only if $p \in S_1$.
\end{defi}

\begin{figure}[!ht] 
\begin{center}
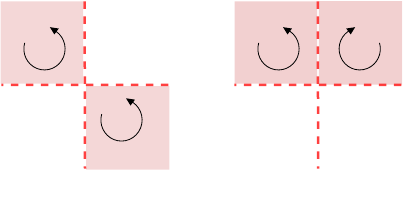
\caption{Tropical $2$-cycles at a positive node. If the orientations are as pictured and $v_1 = v_2$, then $\epsilon_{S_1S_2}(p) = 1$.} \label{canc}
\end{center}
\end{figure}

\begin{defi} \label{pos_nodes_rel}
A given set of nodes $p_1, \ldots, p_k$ of a tropical conifold $(B, \mathcal P, \phi)$ is said to be 
\begin{itemize}
\item[i)]{\it $\omega$-related} if the vanishing cycles associated to the corresponding nodes in $X_{B}$ satisfy a good relation;
\item[ii)] {\it $\C$-related} if the exceptional $\PP^1$'s in some small resolution of the corresponding nodes in $\check X_{B}$ satisfy a good relation;
\item[iii)]{\it related} if there exist tropical $2$-cycles $(S_1, j_1) \ldots, (S_r, j_r)$ such that it coincides with the set of nodes satisfying
\begin{equation} \label{rel_coef}
 \sum_{l=1}^{r} \epsilon_{S_kS_l}(p) \neq 0, \ \ \ \text{for at least one} \ k \in \{1, \ldots, r \}. 
\end{equation}
\end{itemize}
\end{defi}

For the motivation behind Definitions \ref{coeff_eps} and \ref{pos_nodes_rel} see \S \ref{example_good_rel}. 

We conjecture the following 

\begin{con} \label{trop_cycle_conj} Given a set of nodes $p_1, \ldots, p_k$ in a tropical conifold then
\begin{itemize}
\item[i)] the notions of related, $\omega$-related and $\C$-related are equivalent;
\item[ii)] the property that the set can be simultaneously tropically re\-solved/smo\-othed by the above process is equivalent to some property of the set expressible purely in terms of tropical $2$-cycles containing the nodes. 
\end{itemize}
\end{con} 

Although we believe this conjecture to be morally true, we do not exclude that refinements in our definition of tropical $2$-cycle will be necessary. We do not know if $\omega$ or $\C$-related is equivalent to the fact that the set can be simultaneously resolved/smoothed. In Proposition \ref{grel_comb} we prove that $\omega$-related implies related in a class of examples of compact tropical conifolds. These examples and the next paragraph show evidence of (ii). 

\begin{rem} \label{rescalings} In the proof of the following cases, in order to preserve integrality, we allow rescalings of the affine structure on $B$ (or $\check B$). This means that we rescale all polytopes of $\mathcal P$ (or $\check {\mathcal P}$) by a factor of $N \in \N$. 
\end{rem}

\subsection{Some special cases.} Suppose that there is a polytope $P \in \mathcal P$ of the type $P = L \times [0, m]$, where $L$ is a $2$-dimensional convex integral polytope in $\R^2$ and consider $S = L \times  \{ m/2 \}$. We assume that $\partial S \subset \Delta$, that the vertices $p_1, \ldots, p_k$ of $S$ are positive nodes and that $\partial S$ does not contain any other vertices of $\Delta$. These nodes are related (see Figure~\ref{pos_nodes_3}) by the tropical $2$-cycle $(S, j, v)$, where $j$ is the inclusion and $v$ is the vector field parallel to the edges containing the nodes. 

\begin{figure}[!ht] 
\begin{center}
\input{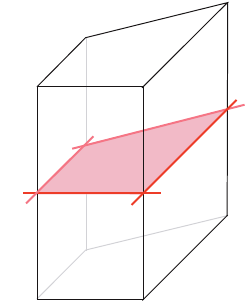_t}
\caption{}
\label{pos_nodes_3}
\end{center}
\end{figure}

\begin{thm} \label{resolve_nodes_1}
If positive nodes $p_1, \ldots, p_k$ in a tropical conifold $(B, \mathcal P, \phi)$ are in a configuration as above, then they can be tropically resolved.
\end{thm}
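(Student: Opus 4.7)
The strategy is to work on the mirror side $\check B$, where the discrete Legendre transform converts each positive node $p_j$ into a negative node $\check p_j$ located at the barycenter of a square $2$-face $\check\ell_j$ of $\check{\mathcal P}$. By \S\ref{simult_res_sm}, the positive nodes $p_1,\ldots,p_k$ on $B$ can be tropically resolved precisely when one can produce a refinement $\check{\mathcal P}'$ of $\check{\mathcal P}$ inducing a diagonal subdivision of each $\check\ell_j$, together with an MPL function $\check\phi'$ which is strictly convex with respect to $\check{\mathcal P}'$. The required resolution $(B',\mathcal P',\phi')$ is then the discrete Legendre transform of $(\check B,\check{\mathcal P}',\check\phi')$.

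The decisive feature of our configuration is that all edges $\ell_1,\ldots,\ell_k$ carrying the positive nodes are the vertical edges of the single polytope $P=L\times[0,m]$. Dually, the squares $\check\ell_1,\ldots,\check\ell_k$ all share the common vertex $\check P$ of $\check{\mathcal P}$ (dual to $P$), and the tropical $2$-cycle $S=L\times\{m/2\}$ canonically selects, for each $j$, the diagonal of $\check\ell_j$ emanating from $\check P$: it is dual to the new $2$-face through $p_j$ that $S$ contributes to $\mathcal P'$. This coherent choice of diagonals is what makes the local smoothings of the $\check p_j$ simultaneously achievable.

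First I would construct $\check{\mathcal P}'$ by adjoining these $k$ diagonals together with the induced subdivisions of the maximal cells of $\check{\mathcal P}$ containing $\check P$, exactly as in the local model of \S\ref{trop_res_+ve}. Because the squares $\check\ell_j$ meet only at $\check P$ and all the chosen diagonals emanate from $\check P$, the local subdivisions of the ambient maximal cells are compatible along their shared faces and glue into a global refinement. Intrinsically on the primal side this corresponds to adjoining $S=L\times\{m/2\}$ as a new $2$-face of $\mathcal P'$ (after rescaling by $2$ for integrality) and performing the matching subdivisions at each $p_j$ of the three non-$P$ maximal cells of $\mathcal P$ sharing $\ell_j$. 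Next I would define $\check\phi'=\check\phi+\tilde\phi$, where $\tilde\phi$ is the MPL function supported in a neighbourhood of $\check P$ whose representative on the star fan of $\check P$ within $\check{\mathcal P}'$ takes the value $1$ on each ray emanating from $\check P$ along one of the $k$ new diagonals, and $0$ on all other rays of the star. Strict convexity of $\check\phi'$ then reduces, cell by cell in the star of $\check P$, to the strict convexity already verified in the local model of \S\ref{trop_res_+ve}, while away from the star of $\check P$ the decomposition and MPL function are unchanged.

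The main obstacle is the twofold global compatibility: that the $k$ local refinements and the $k$ local bump contributions assemble globally into a well-defined refinement and a well-defined MPL function. The hypothesis that $P=L\times[0,m]$ with the $p_j$ sitting as the vertices of $L\times\{m/2\}$ is precisely what makes both automatic. The edges $\ell_j$ issue from distinct vertices of $L$, so the maximal cells of $\mathcal P$ adjacent to different $\ell_j$ other than $P$ are disjoint; hence the corresponding subdivisions on $\check B$ do not interfere, and the refinement inside $P$ is literally the product of the identity on $L$ with the subdivision $[0,m]=[0,m/2]\cup[m/2,m]$. Moreover $\tilde\phi$, being defined by a single PL prescription on the star fan at the single vertex $\check P$, descends tautologically to an MPL function on $\check B$. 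Applying the discrete Legendre transform to $(\check B,\check{\mathcal P}',\check\phi')$ then produces the desired tropical manifold (or conifold, if other nodes of $B$ remain) $(B',\mathcal P',\phi')$ in which $p_1,\ldots,p_k$ have been tropically resolved.
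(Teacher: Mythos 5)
Your overall strategy is the paper's: pass to the Legendre dual, refine $\check{\mathcal P}$ so as to smooth the mirror negative nodes, exhibit a strictly convex MPL function for the refinement, and dualize back. But two of your key steps do not hold up as stated.

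First, the choice of diagonals. You subdivide each square $\check e_j$ by the diagonal \emph{through} the common vertex $\check P=v_0$, claiming this is canonically selected by $S$. The paper does the opposite: it takes the diagonal from $v_j$ to $v_{j+1}$, i.e.\ the one \emph{avoiding} $v_0$, so that the $k$ diagonals enclose a planar region $\check S$ around $v_0$ and the fan at $v_0$ is left untouched — a fact the convexity argument uses explicitly ("the fan at $v_0$ was unaffected by the subdivision"). Your choice refines the fan at $v_0$ by inserting $k$ new rays into the $2$-cones $C_j\times\{0\}$ of the normal fan of $P$, together with new walls through $v_0$ in each maximal cone $C_j^{\pm}$; the existence of a strictly convex function on this refined fan is exactly what has to be proved and is not supplied by the local model of \S\ref{trop_res_+ve}, which treats a single isolated square between two prisms with no distinguished vertex $v_0$ and a different $\tilde\phi$. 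The two diagonals correspond to the two flops of the mirror resolution, and while both are admissible locally, you have not shown your global choice is unobstructed.

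Second, your $\tilde\phi$ is not a well-defined MPL function. Prescribing it only on the star fan at $\check P$ (value $1$ on the new diagonal rays, $0$ elsewhere, $0$ at every other vertex) violates the compatibility of quotient functions along edges: along the edge $\ell_j$ from $v_0$ to $v_j$, the quotient fan at $v_0$ contains the images of the two adjacent diagonal rays (those of $\check e_{j-1}$ and $\check e_j$, both of whose $2$-cones contain $\bar\ell_j$), so the quotient of your $\tilde\phi$ at $v_0$ takes the value $1$ there and is not affine, whereas the quotient computed at $v_j$ is identically zero. This is precisely why the paper's $\tilde\phi$ must carry carefully balanced nonzero values at $v_0$, at each $v_j$, and at the points $v^{\pm}$ on the transverse edges $\ell^{\pm}$ — namely $(n_j,m_j)=(1,-1)$, $(n^{+},m^{+})=(n,-2n+1)$, $(n^{-},m^{-})=(n,0)$ — followed by an edge-by-edge verification. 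Relatedly, $\check\phi'=\check\phi+\tilde\phi$ is not enough: since $\tilde\phi$ can be concave across old walls, one needs $\check\phi'=N\check\phi+\tilde\phi$ with $N\gg0$, using that $h_{\tilde\phi}(F)>0$ on the new walls where $h_{\check\phi}(F)=0$. (A minor further slip: maximal cells adjacent to distinct edges $e_j$, $e_{j+1}$ of $P$ need not be disjoint, since those edges share the $2$-face $[q_j,q_{j+1}]\times[0,m]$.)
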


\begin{proof}
We smooth the mirror nodes. First observe that 
since the positive nodes coincide with the vertices of $S$ and $\partial S \subseteq \Delta$, then $L$ (and therefore $S$) must be a Delzant polytope. This means that the tangent wedge at each vertex of $L$ is generated by primitive integral vectors which form a $\Z^2$ basis. 
Let $q_1, \ldots, q_k$ be the vertices of $L$, ordered so that $q_j$ and $q_{j+1}$ belong to the same edge (and indices are cyclic). Then the edges of $P$ containing the nodes are given by $e_j = q_j \times [0,m]$ and we denote the vertices of these edges by $q_j^+ = (q_j, m)$ and $q_j^- = (q_j,0)$. Let us denote by $Q_j^+$ and $Q_j^-$ the $3$-dimensional polytopes of $\check {\mathcal P}$ which are dual to the vertices $q_j^+$ and $q_j^-$ respectively. It is clear that $Q_j^+$ and $Q_j^-$ will have as common face the square face $\check e_j$ which is dual to the edge $e_j$. 
Now all the faces $\check e_j$ intersect in a common vertex $v_0$ which is dual to the polytope $P$. 
Moreover, $\check e_{j-1}$ and $\check e_{j}$ intersect in an edge, which we denote $\ell_j$. One of the vertices of $\ell_j$ is $v_0$ and we denote the other one by $v_j$ (see Figure~\ref{pos_nodes_1}). The fan structure at $v_0$ is given by the normal fan of $P$, which we denote $\Sigma_P$. If $\Sigma_L \subset \R^2$ denotes the normal fan of $L$, with $2$-dimensional cones $C_1, \ldots, C_k$, then $\Sigma_P$ is clearly given by the $3$-dimensional cones $C_j^+ = C_j \times [ 0, + \infty)$ and $C_j^- = C_j \times (-\infty, 0]$. We have that $C_j^+$ and $C_j^-$ are the tangent wedges at $v_0$ of the polytopes $Q_j^+$ and $Q_j^-$ respectively.  Notice that emanating from $v_0$ we have the edges $\ell_j$, all lying in a plane, plus two more edges transversal to this plane which we denote $\ell^+$ and $\ell^-$, belonging to $Q_j^+$ and $Q_j^-$ respectively. In affine coordinates $\ell^+$ and $\ell^-$ have opposite directions. 

\begin{figure}[!ht] 
\begin{center}
\input{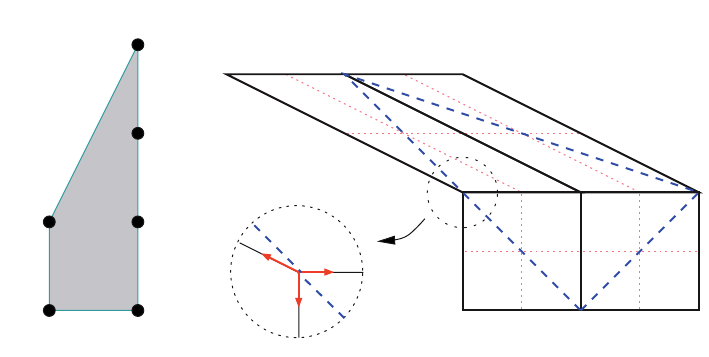_t}
\caption{On the left we see the polytope $L$ and on the right the square faces $\check e_1, \ldots, \check e_4$ in the mirror. The (dark) dashed diagonals give the subdivision and we have also drawn the discriminant locus (light dotted lines).} \label{pos_nodes_1}
\end{center}
\end{figure}

We now describe the refinement $\check {\mathcal P}'$ of $\check {\mathcal P}$. We subdivide the faces $\check e_j$ by taking the diagonal joining $v_j$ to $v_{j+1}$. We extend this subdivision as follows. The union of all the diagonals from $v_j$ to $v_{j+1}$ encloses a $2$-dimensional region containing the vertex $v_0$. We denote this region by $\check S$. Notice that $\check S \cap \check \Delta$ consists only of the (negative) nodes in the barycenters of the faces 
$\check e_j$ and the edges connecting them. Therefore there exists an open neighborhood $U$ of the relative interior of $\check S$ such that $\bar U \cap \check \Delta = \check S \cap \check \Delta$, where $\bar U$ is the closure of $U$. Moreover we can assume that $\bar U \cap Q_j^{\pm}$ is convex, for all $j$. Now rescale $B$ (see Remark \ref{rescalings}) so that the edges $\ell^+$ and $\ell^-$ contain at least one interior integral point which is also contained in $U$.  Let $v^+$ and $v^-$ be the interior integral points of $\ell^+$ and $\ell^-$ respectively, closest to $v_0$.  
Now subdivide each $Q_j^{\pm}$ in two convex polytopes: one being  $V_{j}^{\pm} = \conv (v^{\pm}, v_0, v_j, v_{j+1})$ and the other the closure of its complement. By the convexity of $U \cap Q^{\pm}_j$, we also have $V_{j}^{\pm} \subseteq \bar U$, for all $j$.  It is clear that this gives a well defined refinement $\check {\mathcal P}'$ of $\check {\mathcal P}$ which extends the given diagonal subdivision of the faces. 
It follows from the choice of $U$ and the fact that $V_{j}^{\pm} \subseteq \bar U$ that, among the new facets of codimension at least $1$, the only ones whose relative interiors intersect $\Delta$ are the diagonals of the faces $\check e_j$. This ensures that the decomposition $\check {\mathcal P}'$ alters the discriminant locus only at the nodes contained in the faces $\check e_j$.  Moreover it implies that the subdivision is toric (in the sense of Gross and Siebert, see \S \ref{polyhedral_dec} and \ref{mon_properties}). This follows from Proposition 1.32 of \cite{G-Siebert2003} where it is shown that a polyhedral subdivision is toric if and only if for every facet $\tau$ there exists a neighborhood $U_{\tau}$ of $\inter(\tau)$ such that, if $b \in \tau - \Delta$ and $\gamma \in \pi_1(U_{\tau}- \Delta, b)$, then $\tilde \rho(\gamma)(w) - w$ is tangent to $\tau$ for every $w \in T_bB_0$. This property trivially holds for the new facets whose interiors do not intersect $\Delta$. For the diagonals of the faces $\check e_j$ it can be easily verified (it can be done on the local models). 

We now define $\check \phi'$. First we construct an MPL-function $\tilde \phi$, which we may regard as ``supported in a neighborhood of $\check S$''.  
Notice that $\ell_j$ and $\ell_{j+1}$ are edges of a square face, hence all $\ell_j$'s have the same integral length, which we denote by $n$. On the fan at $v_0$, $\tilde \phi$ maps the integral generator of the one dimensional cone corresponding to $\ell_j$  to $-1$, the integral generator of $\ell^{+}$ to $-2n+1$ and the integral generator of $\ell^{-}$ to  $0$. On the fan at $v_j$, $\tilde \phi$ maps the integral generator of $\ell_j$ to $1$ and all other edges to zero.
On the fan at $v^{\pm}$,  $\tilde \phi$ maps the generator of $\ell^{\pm}$ to $n$ and all other edges to zero.
At all remaining vertices of $\check {\mathcal P}'$, $\tilde \phi$ is defined to be zero. We claim that
$\tilde \phi$ is a well defined MPL-function. Once we have proved this, we define 
\begin{equation} \label{new_function}
 \check \phi' = N \check \phi + \tilde \phi, 
\end{equation}
for some integer $N$ and we claim that for $N$ sufficiently big, $\check \phi'$ is
strictly convex.

We now prove that $\tilde \phi$ is well defined. 
We need to show that, along every edge of $\check {\mathcal P}'$ connecting vertices $v$ and $v'$, the quotient functions, computed at $v$ and $v'$, match.  We only do this for the pair of vertices $v_j$ and $v^+$. The other cases are similar. At $v_j$,  construct a basis $\{ f_1, f_2, f_3 \}$ for $T_{v_j} \check B$ as follows. Let $f_1$ be the integral generator of the edge $\ell_j$ and $f_2$  the integral generator of the other edge of $\check e_j$ adjacent to $v_j$ (hence parallel to $\ell_{j+1}$). Notice that if we denote by $w$ the integral generator of the other edge of $\check e_{j-1}$ adjacent to $v_j$ (hence parallel to $\ell_{j-1}$) then $w = - f_2 + k_j f_1$ for some $k_j$ (see Figure~\ref{pos_nodes_1}). Now at $v_0$, consider the vector $v^+ - v_0$ and parallel transport it to $T_{v_j} \check B$ via a curve from $v_0$ to $v_j$ passing into $V^+_{j}$. We define $f_3$ to be the resulting vector. Notice that, if we parallel transport $f_3$ back along the same curve and then again to $v_j$ along a curve passing into $V_j^-$, then we obtain $f_3 + f_1$. This is the effect of monodromy around the component of the discriminant locus passing trough the edge $\ell_j$. This implies that if we parallel transport $v^- - v_0$ to $v_j$ along a curve passing into $V_j^-$ then we obtain $- f_3 - f_1$, since in affine coordinates at $v_0$ we have $v^- - v_0 = -(v^+ - v_0)$. Notice that the tangent direction to the edge from $v_j$ to $v^+$ at $v_j$ is $n f_1 + f_3$, where $n$ is the integral length of $\ell_j$. Similarly the tangent direction to the edge from $v_j$ to $v^-$ is $(n-1)f_1 - f_3$. 
Finally, in the fan structure at $v_j$, the tangent wedges of the polytopes $V_j^+$, $V_j^-$, $V_{j-1}^{-}$, $V_{j-1}^+$ correspond respectively to the cones
 \[ \cone( f_1, f_1 + f_2, nf_1 + f_3), \ \ \cone( f_1, f_1 + f_2, (n-1)f_1 - f_3), \]
 \[ \cone(f_1, w + f_1, (n-1)f_1 - f_3),  \ \ \cone( f_1, w+ f_1, nf_1 + f_3 ). \]
As defined, $\tilde \phi$ has value $1$ at $f_1$ and zero at all other one dimensional cones of the fan. Clearly, this uniquely defines integral linear maps on each cone.


We now compute the quotient along the edge from $v_j$ to $v^+$, i.e. along $nf_1 + f_3$. 
We choose a basis for the quotient space to be $\{ [f_1], [f_2]\}$.  The quotient fan will have four maximal cones, two of which are $V_{j-1}^+$ and $V_j^+$. Since $[f_1 + f_2] = [f_1] + [f_2]$ and $[w + f_1] = (k_j + 1) [f_1]-[f_2]$, the four cones are $\cone([f_1],[f_1] + [f_2])$, $\cone( [f_1], (k_j +1) [f_1] - [f_2])$, $\cone( -[f_1], [f_1] + [f_2])$ and $\cone( -[f_1], (k_j +1) [f_1] - [f_2])$, where the first two correspond to $V_{j}^+$ and $V_{j-1}^+$ respectively. The quotient function of $\tilde \phi$ maps $[f_1]$ to $1$ and all other generators of one dimensional cones to zero. 


We now come to the vertex $v^+$. Consider the normal fan of $L$ in $\R^2$ and view it as embedded in $\R^3$ by the first two coordinates and denote $v = (0,0,1)$. Then the fan at $v^+$ is given by the cones $K_j^-=\cone( n \bar \ell_j - v, \, n \bar \ell_{j+1} - v, \, -v )$ and $K_j^+= \cone( n \bar \ell_j - v, \, n \bar \ell_{j+1} - v, \, v )$, where $j=1, \ldots, k$. The cones $K_j^-$ correspond to the polytopes $V_{j}^+$, hence $-v$ points in the direction of $v_0$ and $n \bar \ell_j - v$ in the direction of $v_j$. Then $\tilde \phi$ has value $n$ on $-v$ and $0$ at all other generators of one dimensional cones. 


We now check the quotient along $n \bar \ell_j - v$. A basis for the quotient is $\{ [\bar \ell_j], [\bar \ell_{j+1}] \}$. Notice that $[v] = n [ \bar \ell_j ]$, $[n \bar \ell_{j-1} - v ] = -n(k_j +1)[ \bar \ell_j]- n [\bar \ell_{j+1}]$ and $[n \bar \ell_{j+1} - v] = n[\bar \ell_{j+1}] - n [\bar \ell_j]$. Therefore we have the four cones in the quotient:
$\cone( -(k_j +1)[ \bar \ell_j]-  [\bar \ell_{j+1}], -[\bar \ell_j] )$, $\cone( [\bar \ell_{j+1}] - [\bar \ell_j], - [\bar \ell_{j}] )$, $\cone( -(k_j +1)[ \bar \ell_j]-  [\bar \ell_{j+1}], [\bar \ell_j])$, 
$\cone( [\bar \ell_{j+1}] - [\bar \ell_j],  [\bar \ell_{j}] )$, corresponding to the cones $K_{j-1}^-$, 
$K_{j}^-$, $K_{j-1}^+$ and $K_{j}^+$ respectively. Now the quotient function maps $-[\ell_j]$ to $1$ and all other generators of one dimensional cones to zero. This shows that the functions along the edge from $v^+$ to $v_j$ coincide at the vertices $v^+$ and $v_j$. Similarly for all other vertices. 


Now we prove that $\check \phi'$ as defined in (\ref{new_function}) is strictly convex. 
First let us recall some useful facts. Suppose $\Sigma$ is a complete fan in $\R^3$ and $\phi$ a piecewise linear function on $\Sigma$. Let $F$ be a two dimensional cone of $\Sigma$ and $C$, $C'$ two maximal cones such that $F = C \cap C'$. Then $\phi$ restricts to linear maps $m$ and $m'$ on $C$ and $C'$ respectively, such that $(m-m')|_{F} = 0$. Now consider a $\Z^3$-basis $\{ f_1, f_2, f_3 \}$ of $\R^3$, such that $f_1$ and $f_2$ generate the plane containing $F$ and $f_3$ points towards the interior of $C$. Then the integer $h_{\phi}(F) = \inner {m-m'}{f_3}$ is independent of the chosen basis. The function $\phi$ is strictly convex if and only if $h_{\phi}(F) > 0$ for every two dimensional cone $F$ of $\Sigma$.   Clearly $h_{N\phi}(F) = N h_{\phi}(F)$ for every integer $N$ and $h_{\phi + \phi'}(F) = h_{\phi}(F) + h_{\phi'}(F)$. 

The fan at the vertex $v_0$ was unaffected by the subdivision $\check{\mathcal P}'$. Now recall that if we consider the space of not necessarily integral, piecewise linear functions on a fan $\Sigma$, then the strictly convex ones form an open cone inside this space. This implies that $\check \phi'$ is strictly convex at $v_0$ for sufficiently large $N$, since $\check \phi$ is strictly convex. 

Now consider the vertices $v_j$. Here the fan structure has been changed by the subdivision. Let $\Sigma$ be the fan before the subdivision and $\Sigma'$ the new fan. A two dimensional cone $F$ of $\Sigma'$ can be of two types: it either intersects the interior of a maximal cone of $\Sigma$ or it is entirely contained in a two dimensional cone of $\Sigma$.   
In the latter case we have $h_{\check \phi}(F) > 0$ and therefore for sufficiently large $N$, $h_{\check \phi'}(F) = N h_{\check \phi} (F) + h_{\tilde \phi}(F) > 0$. In the former case $h_{\check \phi}(F) = 0$, but it can be easily checked by direct calculation that $h_{\tilde \phi}(F) > 0$ for all such $F$'s. Therefore we have again $h_{\check \phi'}(F) > 0$. 
The case for the vertices $v^+$ and $v^-$ is analogous. This completes the proof.
\end{proof}

\begin{cor} \label{resolve_nodes_cor}
Suppose that the sets $K = \{ p_1, \ldots, p_k \}$ and $K' = \{ p_1', \ldots, p_r' \}$  of positive nodes in $B$ are each in a configuration like in Theorem~\ref{resolve_nodes_1} such that $K$ and $K'$ are the corners of $S$ and $S'$ respectively. Assume moreover that $p_1 = p_1'$ and that $S \cap S' = \{ p_1 \}$. Then the nodes $K \cup K'$ are related and can be simultaneously resolved.  
\end{cor}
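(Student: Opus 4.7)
As in Theorem \ref{resolve_nodes_1}, I work on the mirror $(\check B, \check{\mathcal P}, \check\phi)$ and simultaneously smooth the mirror nodes; Legendre duality then produces the desired resolution of $K \cup K'$. Let $P = L \times [0,m]$ and $P' = L' \times [0,m]$ be the polytopes of $\mathcal P$ containing $S$ and $S'$ respectively, let $e_1 = e_1'$ be their shared edge through $p_1 = p_1'$, and write $v_0, v_0'$ for the vertices of $\check{\mathcal P}$ dual to $P, P'$. The plan is to apply the construction of Theorem \ref{resolve_nodes_1} in parallel around $v_0$ and around $v_0'$, and to verify compatibility on the shared cells, which are exactly the square face $\check e_1$ and the two $3$-polytopes $Q_1^\pm$ dual to the endpoints of $e_1$.

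The crucial compatibility is that the two constructions pick the same diagonal of $\check e_1$. The condition $S \cap S' = \{p_1\}$ forces $P$ and $P'$ to share only the edge $e_1$ (and not a $2$-face containing it, which would intersect $S$ and $S'$ in two parallel segments), so $v_0$ and $v_0'$ are diagonally opposite in $\check e_1$; writing the four vertices of $\check e_1$ in cyclic order as $v_0, v_1, v_0', v_2$, the two diagonals are $[v_0, v_0']$ and $[v_1, v_2]$. The $K$-construction picks the diagonal joining the two non-$v_0$ neighbours of $v_0$, namely $[v_1, v_2]$, and the $K'$-construction picks the diagonal joining the two non-$v_0'$ neighbours of $v_0'$, which is again $[v_1, v_2]$. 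Inside $Q_1^\pm$ the $K$-construction inserts the tetrahedron $V_1^\pm = \operatorname{conv}(v^\pm, v_0, v_1, v_2)$ and the $K'$-construction inserts $V_1'^\pm = \operatorname{conv}(v'^\pm, v_0', v_1, v_2)$; these meet only along $[v_1, v_2]$. After rescaling $\check B$ by a common integer factor so that the new integral points $v^\pm, v'^\pm$ lie close to $v_0, v_0'$, the closure of the complement of $V_1^\pm \cup V_1'^\pm$ inside $Q_1^\pm$ admits a convex polyhedral subdivision, producing a common refinement $\check{\mathcal P}'$ which extends both individual refinements. Elsewhere the two refinements act on disjoint collections of cells and patch together tautologically.

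For the polarization I set $\check\phi' = N \check\phi + \tilde\phi_K + \tilde\phi_{K'}$, where $\tilde\phi_K, \tilde\phi_{K'}$ are the MPL-functions produced in the proof of Theorem \ref{resolve_nodes_1} for $S$ and $S'$. At every fan of the refinement the nonzero values of $\tilde\phi_K$ are supported on rays pointing toward $v_0$, $v^\pm$, or one of the $v_j$'s on the $S$-side, while those of $\tilde\phi_{K'}$ are supported on rays pointing toward $v_0'$, $v'^\pm$, or the $v_j'$'s; in particular at the shared vertices $v_1, v_2$ the two contributions live on disjoint one-dimensional cones. Consequently, each quotient-function compatibility check for the sum $\tilde\phi_K + \tilde\phi_{K'}$ reduces to one already performed in Theorem \ref{resolve_nodes_1}, so $\check\phi'$ is a well-defined MPL-function. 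Strict convexity for $N$ sufficiently large follows by the same open-cone argument: for every new $2$-cone $F$ in any fan of $\check{\mathcal P}'$ one has $h_{\check\phi'}(F) = N h_{\check\phi}(F) + h_{\tilde\phi_K}(F) + h_{\tilde\phi_{K'}}(F) > 0$, either because $h_{\check\phi}(F) > 0$, or because $F$ is a new cone on which exactly one of the two correction terms was shown to contribute positively in Theorem \ref{resolve_nodes_1}.

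The main obstacle is the combinatorial step of extending $V_1^\pm \cup V_1'^\pm$ to a convex polyhedral subdivision of $Q_1^\pm$ whose fan structures at the interior points $v^\pm, v'^\pm$ agree with those prescribed by Theorem \ref{resolve_nodes_1}; once this is settled (which is expected after a sufficient rescaling), the corollary follows by applying the discrete Legendre transform to $(\check B, \check{\mathcal P}', \check\phi')$.
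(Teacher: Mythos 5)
Your proposal is correct and follows essentially the same route as the paper: pass to the mirror, observe that both applications of Theorem~\ref{resolve_nodes_1} select the same diagonal $[v_1,v_2]$ of the shared square face $\check e_1$ (since the vertices dual to $P$ and $P'$ sit at opposite corners), subdivide $Q_1^{\pm}$ by the two tetrahedra $\conv(v_0,v_1,v_2,v^{\pm})$, $\conv(w_0,v_1,v_2,w^{\pm})$ together with the closure of their complement, and take $\check\phi' = N\check\phi + \tilde\phi_1 + \tilde\phi_2$ with $N$ large. The compatibility and convexity checks you flag are exactly the ones the paper relies on, so there is nothing further to add.
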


\begin{proof}
To show that the nodes $K \cup K'$ are related it is enough to chose orientations on $S$ and $S'$ so that, at the node $p_1$ they are like in Figure \ref{canc} (a) and then chose the sign of the vector fields $v$ and $v'$ so that they coincide in a neighborhood of $p_1$.  

Let $P = L \times [0, m]$ and $P'= L' \times [0,m]$ be the polytopes of $\mathcal P$ such that 
$S = L \times \{ m/2 \}$ and $S' = L' \times \{ m/2 \}$. Then $P$ and $P'$ have only one edge in common, i.e. the one containing $p_1$. Denote it by $e_1$. Notice that if we carry out the construction explained in Theorem~\ref{resolve_nodes_1} for $P$ or for $P'$ we obtain 
the same subdivision of the two-face $\check e_1$, mirror of $e_1$. Denote by $v_1$ and $v_2$ the opposite vertices of $\check e_1$ forming the diagonal in such a subdivision.  Also denote by $v_0$ and $w_0$ the vertices mirror to $P$ and $P'$ respectively. Then the vertices of $\check e_1$ are 
$v_1$, $v_2$, $v_0$ and $w_0$. Moreover, emanating from $v_0$ we have two edges $\ell^+$ and $\ell^-$ and the integral points $v^+$ and $v^-$ on them, as in the proof of the theorem. Let us denote by $\lambda^+$, $\lambda^-$ and $w^+$ and $w^-$ the analogous edges and points emanating from $w_0$. Let $Q_1^+$ and $Q_1^-$ be the polytopes such that $Q_1^+ \cap Q_1^- = \check e_1$.
We subdivide $Q_1^{\pm}$ by the polytopes $\conv(v_0, v_1,v_2, v^{\pm})$, $\conv(w_0, v_1,v_2, w^{\pm})$ and the closure of the complement these two. All other polytopes are subdivided as in Theorem~\ref{resolve_nodes_1} applied to $P$ and $P'$. Moreover, the theorem applied to $P$ and $P'$ gives MPL-functions $\tilde \phi_1$ and $\tilde \phi_2$ on $\check B$ respectively. Then we define
\[ \check \phi' = N \check \phi + \tilde \phi_1 + \tilde \phi_2, \]
which, for sufficiently large $N$, is strictly convex. 
\end{proof}

We can also generalize these results to a configuration of nodes as depicted in Figure~\ref{pos_nodes_4}.

\begin{figure}[!ht] 
\begin{center}
\input{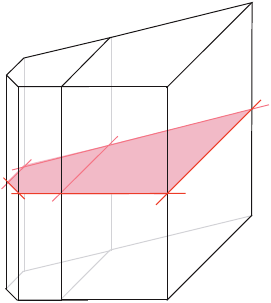_t}
\caption{}
\label{pos_nodes_4}
\end{center}
\end{figure}

\begin{thm} \label{resolve_nodes_2} Suppose that the sets $K = \{ p_1, \ldots, p_k \}$ and $K' = \{ p_1', \ldots, p_r' \}$  of positive nodes in $B$ are each in a configuration like in Theorem~\ref{resolve_nodes_1} such that $K$ and $K'$ are the corners of $S$ and $S'$ respectively. Assume moreover that $S \cap S'$ is an edge of $\Delta$ having as vertices $p_1' = p_1$ and $p_2=p_2'$. Then the sets of nodes $K \cup K'$ and $(K \cup K')- \{ p_1, p_2 \}$ are related and can be simultaneously tropically resolved.  
\end{thm}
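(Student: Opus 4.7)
The plan is to combine the refinements of Theorem~\ref{resolve_nodes_1} applied to $P$ and to $P'$ with the shared-face trick of Corollary~\ref{resolve_nodes_cor}. Write $v_0$ (resp.\ $w_0$) for the vertex of $\check{\mathcal P}$ mirror to $P$ (resp.\ $P'$). The assumption that $S\cap S'$ is an edge of $\Delta$ means that $P$ and $P'$ share a $2$-face of the form $[q_1,q_2]\times[0,m]$; its mirror is an edge $\ell$ of $\check{\mathcal P}$ joining $v_0$ and $w_0$, and $\ell\subset\check e_1\cap\check e_2$. Thus the two square faces $\check e_1$ and $\check e_2$ are common to the constructions of Theorem~\ref{resolve_nodes_1} applied to $P$ and to $P'$, and each of them has four vertices $v_0,w_0,v_j,v_j^{P'}$ for $j=1,2$.

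First I would fix the diagonal subdivision. For each shared face $\check e_j$ ($j=1,2$), the diagonal dictated by applying Theorem~\ref{resolve_nodes_1} to $P$ necessarily contains $w_0$, while the one dictated by applying it to $P'$ contains $v_0$; these are the two distinct diagonals of $\check e_j$, so neither can be used simultaneously for both $P$ and $P'$. Following Corollary~\ref{resolve_nodes_cor}, I choose instead the ``neutral'' diagonal $v_j-v_j^{P'}$ joining the two vertices different from $v_0$ and $w_0$. For the remaining, unshared $\check e_j$ (and $\check e_j^{P'}$) I use the diagonals given by Theorem~\ref{resolve_nodes_1}. I then extend this to a refinement $\check{\mathcal P}'$ of $\check{\mathcal P}$: introduce the integral points $v^\pm$ on $\ell^\pm$ at $v_0$ and $w^\pm$ on $\lambda^\pm$ at $w_0$; for each unshared $Q_j^\pm$ (resp.\ $Q_j^{P',\pm}$) use the subdivision of Theorem~\ref{resolve_nodes_1}, while for the two polytopes on either side of the shared $\check e_j$ ($j=1,2$) insert simultaneously the pyramids $\conv(v^\pm,v_0,v_j,v_j^{P'})$ and $\conv(w^\pm,w_0,v_j,v_j^{P'})$ together with the closure of their complement, exactly as in the proof of Corollary~\ref{resolve_nodes_cor}.

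Next I would set
\[ \check\phi' \;=\; N\check\phi+\tilde\phi_1+\tilde\phi_2, \]
where $\tilde\phi_1$ and $\tilde\phi_2$ are the MPL-functions of Theorem~\ref{resolve_nodes_1} associated with $P$ and with $P'$ respectively. Because the neutral diagonal of $\check e_1,\check e_2$ is symmetric with respect to $v_0$ and $w_0$ and preserves the Delzant structure of $L$ (resp.\ $L'$) at each of the corners $q_1,q_2$, the cone-level data defining $\tilde\phi_1$ (values $1$ on the generators of the $\ell_j$ at the $v_j$, value $n$ on $v^+$ and $0$ on $v^-$ at $v_0$, etc.) transports verbatim onto $\check{\mathcal P}'$; the quotient-function matching along each edge is the same computation done in the proof of Theorem~\ref{resolve_nodes_1}, and symmetrically for $\tilde\phi_2$. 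Strict convexity of $\check\phi'$ for $N$ large follows from the same dichotomy used at the end of that proof: a two-dimensional cone of the new fans either lies inside a two-dimensional cone of $\check{\mathcal P}$, where $h_{N\check\phi}>0$ dominates any finite contribution from $\tilde\phi_1+\tilde\phi_2$, or it is genuinely new, in which case a direct calculation of $h_{\tilde\phi_1+\tilde\phi_2}$ is required.

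The hard step will be this last verification at the shared vertices $v_0$ and $w_0$, where both $\tilde\phi_1$ and $\tilde\phi_2$ are nontrivial. In particular, along $\ell$ the two summands contribute with opposite signs on the generator pointing from $v_0$ to $w_0$ (they arise from different labellings of $\ell$, viewed as $\ell_2^P$ at $v_0$ or as $\ell_2^{P'}$ at $w_0$), so one must check that the partial cancellation does not destroy the positivity of $h_{\tilde\phi_1+\tilde\phi_2}$ on any of the new two-dimensional cones containing $\ell$. The neutral choice of diagonal of $\check e_1,\check e_2$ is precisely what keeps the residual contributions from the subdivided $Q_j^\pm$ and $Q_j^{P',\pm}$ symmetric in $v_0$ and $w_0$, and the required positivity is then obtained by a bookkeeping argument formally identical to the one at $v_0$ in Theorem~\ref{resolve_nodes_1} and at the common vertex in Corollary~\ref{resolve_nodes_cor}. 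Once this is in place, $(\check B,\check{\mathcal P}',\check\phi')$ is a tropical conifold in which the negative nodes mirror to $K\cup K'$ have been simultaneously smoothed, and its discrete Legendre transform $(B',\mathcal P',\phi')$ gives the desired simultaneous resolution.
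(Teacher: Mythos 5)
The central geometric step of your construction fails: the ``neutral'' diagonal you want to use does not exist in this configuration. Since $S\cap S'$ is a whole edge of $\Delta$ with endpoints $p_1=p_1'$ and $p_2=p_2'$, the prisms $P$ and $P'$ share a $2$-dimensional face, so in the mirror the vertices $v_0$ and $w_0$ are joined by an edge $\ell_2$ of $\check{\mathcal P}$ and are therefore \emph{adjacent} vertices of each shared square face. Writing the vertices of $\check e_1$ in cyclic order as $v_1, v_0, w_0, w_1$ (three of its edges being $\ell_1=v_0v_1$, $\ell_2=v_0w_0$ and $\lambda_1=w_0w_1$), the only diagonals are $v_0w_1$ and $v_1w_0$; the segment joining ``the two vertices different from $v_0$ and $w_0$'' is the fourth \emph{edge} of the face, not a diagonal. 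The situation is genuinely different from Corollary~\ref{resolve_nodes_cor}, where $P\cap P'$ is only an edge, $v_0$ and $w_0$ are \emph{opposite} vertices of the single shared face, and the remaining pair of opposite vertices does supply a diagonal common to both constructions. Your proposal transplants that trick to a setting where it is unavailable, so the refinement $\check{\mathcal P}'$ you describe is not defined, and everything downstream (the verbatim transport of $\tilde\phi_1,\tilde\phi_2$, the positivity bookkeeping) has no foundation.

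The paper instead commits to one of the two existing diagonals, namely $v_0w_j$, and compensates for the resulting asymmetry with a two-stage construction: first a coarser refinement that leaves $\check e_1,\check e_2$ undivided, carrying an MPL-function $\tilde\phi_1$ defined symmetrically at $v_0$ and $w_0$ (value $0$ on the tangent direction of $\ell_2=\lambda_2$, $-1$ on the other $\ell_j$ and $\lambda_j$, and so on); then a further refinement of the polytopes $\conv(v_0,w_0,v_k,w_k,v^{\pm},w^{\pm})$ which cuts $\check e_1,\check e_2$ along $v_0w_j$, carrying a second function $\tilde\phi_2$ supported only on the cells containing $w_0$ and defined there as in Theorem~\ref{resolve_nodes_1}. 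Note also that your argument ignores the monodromy around the common edge of $S$ and $S'$: one may arrange $\ell^+$ and $\lambda^+$ to be parallel inside $Q_2^+$, but $\ell^-$ and $\lambda^-$ are then \emph{not} parallel inside $Q_2^-$, which is precisely why the subdivision of $Q_1^{\pm},Q_2^{\pm}$ must be built by hand rather than by superposing the two Theorem~\ref{resolve_nodes_1} constructions. Finally, you flag the positivity of the bending of $\check\phi'$ on the new codimension-one cones at $v_0$ and $w_0$ as ``the hard step'' without carrying it out; that verification is exactly where the asymmetric choice of $\tilde\phi_1$ and $\tilde\phi_2$ is needed.
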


\begin{proof}
To show that the set of nodes $K \cup K'$ is related choose orientations on $S$ and $S'$ so that near the nodes $p_1$ and $p_2$ they are like in Figure \ref{canc} (b) and then choose the signs of the vector fields $v$ and $v'$ so that they coincide near $p_1$ and $p_2$. To show that the set of nodes $(K \cup K')- \{ p_1, p_2 \}$ is related, change the orientation of $S$. 
Let $P = L \times [0, m]$ and $P'= L' \times [0,m]$ be the polytopes of $\mathcal P$ such that 
$S = L \times \{ m/2 \}$ and $S' = L' \times \{ m/2 \}$. 
Then, if $q_1, \ldots, q_k$ are the corners of $L$ and $q_1', \ldots, q_r'$ are the corners of $L'$ we assume that the nodes $p_j$ and $p_j'$ correspond to $(q_j , m/2)$ and $(q_j', m/2)$ respectively. 
Since we assume that $p_1 = p_1'$ and $p_2 = p_2'$, we have that $(q_1, m/2)$ and $(q_2, m/2)$ are identified with $(q_1', m/2)$ and $(q_2', m/2)$ via an identification of the corresponding $2$ dimensional faces of $P$ and $P'$ respectively. 

\begin{figure}[!ht] 
\begin{center}
\input{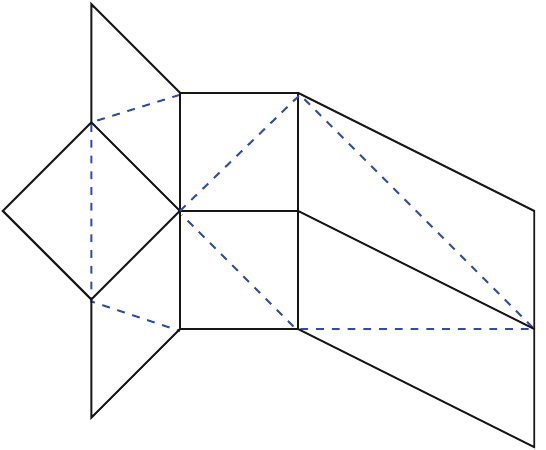_t}
\caption{}
\label{pos_nodes_2}
\end{center}
\end{figure}

Now let us look at the mirror. As in the proof of Theorem~\ref{resolve_nodes_1}, we denote by $Q^+_j$ and $Q^-_j$ the maximal polytopes of $\mathcal P '$ corresponding to the vertices $(q_{j},m)$ and $(q_j,0)$ respectively and by $R^{+}_{j}$ and $R^{-}_{j}$ those corresponding to $(q_{j}',m)$ and $(q_j',0)$. Clearly we have $Q^{\pm}_{j} = R^{ \pm}_{j}$ when $j=1,2$. 
Also denote by $\check e_j$ and $\check d_j$  respectively the square faces $Q_j^+ \cap Q_j^-$ and $R_j^+ \cap R_j^-$. We also denote the edges $\ell_j = \check e_{j-1} \cap \check e_{j}$ and $\lambda_j = \check d_{j-1} \cap \check d_{j}$. Clearly we have $\check e_{j} = \check d_j$ for $j=1,2$ and $\ell_2 = \lambda_2$. We let $v_0$ be the vertex dual to $P$ and $w_0$ the vertex dual to $P'$.  We have that $v_0$ and $w_0$ are connected by the edge $\ell_2$. 
The edges $\ell_j$ and $\lambda_j$ emanate from $v_0$ and $w_0$ respectively (see Figure~\ref{pos_nodes_2}) and we denote by $v_j$ and $w_j$ the other vertices of $\ell_j$ and $\lambda_j$ respectively (here $j \neq 2$). We assume that $v_0, w_0, w_3, v_3$ are the vertices of $\check e_2$ and $v_0, w_0, w_1, v_1$ are the vertices of $\check e_1$. We denote by $\ell^{+}$ and $\ell^-$ (resp. $\lambda^+$ and $\lambda^-$) the other two edges emanating from $v_0$ (resp. $w_0$) which are mirror respectively to the faces $L \times \{ m \}$ and $L \times \{ 0 \}$ (resp. $L' \times \{ m \}$ and $L' \times \{ 0 \}$). We let $v^+$ and $w^+$  (resp. $v^-$ and $w^-$) be the interior integral points on $\ell^+$ and $\lambda^+$ (resp. $\ell^-$ and $\lambda^-$) which are closest to $v_0$ and $w_0$ respectively. 

Let us first smooth the negative nodes lying inside all the faces $\check e_j$ and $\check d_j$ except $j=1,2$. This resolves the set $(K \cup K')- \{ p_1, p_2 \}$. Subdivide the faces $\check e_j$ and $\check d_j$, with $j \neq 1,2$, by adding the diagonals from $v_j$ to $v_{j+1}$ and from $w_j$ to $w_{j+1}$. 
We leave $\check e_1$ and $\check e_2$ as they are for the moment. We extend this subdivision as follows. 
The polytopes $Q^{\pm}_{j}$ and $R^{\pm}_j$ with $j \neq 1,2$ are subdivided just like in the proof of Theorem~\ref{resolve_nodes_1}, e.g. $Q^{+}_{j}$ is subdivided by taking $\conv(v_0, v_{j}, v_{j+1}, v^{+})$ as one polytope and the closure of its complement as the other one. 
We subdivide $Q^{\pm}_1$ (resp $Q^{\pm}_2$) by taking  $\conv(v_0, w_0, v^{\pm}, w^{\pm}, v_1, w_1)$ (resp. $\conv(v_0, w_0, v^{\pm}, w^{\pm}, v_3, w_3)$) as one polytope and the closure of its complement as the other. 
The fact that the latter is convex (and hence that the subdivision is well defined) follows from the observation that the vertices $v_1$, $w_1$, $v^{\pm}$ and $w^{\pm}$ are coplanar in $Q_1^{\pm}$ (similarly the vertices  $v_3$, $w_3$, $v^{\pm}$ and $w^{\pm}$ in $Q_2^{\pm}$). This is a consequence of the fact that the tangent wedges to $Q_1^{\pm}$ at the vertices $v_0$ and $w_0$ are smooth cones (since $L$ and $L'$ are Delzant). Using the same argument as in Theorem \ref{resolve_nodes_1} we can assume that no parts of the discriminant $\check \Delta$ are affected by this subdivision other than the nodes we want to smooth. 

We now define an MPL-function $\tilde{\phi}_1$. On the fan at $v_0$ (resp. $w_0$), $\tilde \phi_1$ takes the value $-1$ on the primitive, integral tangent vectors to the edges $\ell_j$ (resp $\lambda_j$) and $0$ on the primitive, integral tangent vector to $\ell_2$ (reps. $\lambda_2$). On the primitive tangent vector to $\ell^+$ (resp $\lambda^+$), $\tilde \phi_1$ takes the value $-2n + 1$ and on the primitive tangent vector to $\ell^-$ (resp $\lambda^-$) it takes the value $0$. On the fan at $v_j$ (resp. $w_j$), $j \neq 2$, $\tilde \phi_1$ takes the value $1$
on the primitive integral tangent vector to the edge $\ell_j$ (resp. $\lambda_j$) and zero on all other one dimensional cones. On the fan at $v^{\pm}$ (resp $w^{\pm}$), $\tilde \phi_1$ takes the value $n$ on the primitive integral tangent vector to the edge $\ell^{\pm}$ (resp. $\lambda^{\pm}$) and zero on all other one dimensional cones. We let the reader check that $\tilde {\phi}_1$ is a well defined, MPL function with respect to the given decomposition.

To smooth the remaining two nodes, we further subdivide the polytopes $Q^{\pm}_j$ with $j=1,2$. We denote by $\check {\mathcal P}'$ the resulting subdivision. For $k=1$ or $3$, subdivide the polytopes $\conv(v_0,w_0, v_k, w_k, v^{\pm}, w^{\pm})$ as $$ \conv(v_0,w_0, v_k, w_k, v^{\pm}, w^{\pm})= \conv(v_0, w_0, w_k, w^{\pm}) \cup \conv(v_0, v_k, w_k, v^{\pm}, w^{\pm}). $$
This has the effect of subdividing the faces $\check e_j$, $j=1,2$, with the diagonal from $v_0$ to $w_j$ (see Figure~\ref{pos_nodes_2}). Now notice that near $w_0$ the subdivision looks precisely as the one defined in the proof of Theorem~\ref{resolve_nodes_1}, in case we wanted to resolve only the nodes $p_1', \ldots, p_r'$. So we can define a function $\tilde \phi_2$ to be zero outside the polytopes containing $w_0$ and just as in the proof of Theorem~\ref{resolve_nodes_1} on the polytopes which contain $w_0$.  

Finally, we define
\[ \check \phi' = N \check \phi + \tilde \phi_1 + \tilde \phi_2, \]
for some positive integer $N$. As in Theorem~\ref{resolve_nodes_1}, we can check that for sufficiently big $N$, $\check \phi'$ is strictly convex with respect to $\check {\mathcal P}'$. 
This completes the tropical smoothing of the given set of negative nodes. The discrete Legendre transform defines a tropical resolution of the mirror positive nodes. 
\end{proof}

We now prove that also some special configurations of negative nodes can be simultaneously resolved. 
Let $(L, \mathcal P_L)$ be a two-dimensional smooth tropical manifold with boundary, such that $L$ is homeomorphic to a disc, and satisfying the following assumptions.

\begin{ass} \label{negative:assump}
Denote by $v_1, \ldots, v_k$ the boundary vertices of $L$ and by $\lambda_1, \ldots, \lambda_k$ the boundary edges, such that $v_j$ and $v_{j+1}$ are the vertices of $\lambda_j$. We assume the following:
\begin{itemize}
\item[a)] at every vertex $v_j$ the affine structure is such that the edges $\lambda_j$ and $\lambda_{j-1}$ emanating from $v_j$ are colinear.
\item[b)] every vertex $v_j$ belongs to only one other edge of $L$ besides $\lambda_j$ and $\lambda_{j-1}$. Denote it by $\gamma_j$.
\end{itemize}
\end{ass}

Consider $L \times [0,1]$ with the product tropical structure and let $F_j = \lambda_j \times [0,1]$ be the boundary $2$-dimensional faces.  Suppose there is a $3$ dimensional tropical manifold $(B, \mathcal P, \phi)$ and an embedding of tropical manifolds $L \times [0,1] \hookrightarrow B$ such that every two face $F_j$ contains a negative node $p_j$ of $B$. In particular $F_j \cap \Delta$ is the union of the two segments joining the barycenters of opposite pairs of edges of $F_j$. Clearly the negative nodes $p_1, \ldots, p_k$ are related by the tropical $2$-cycle defined by $S= L \times \{ \frac{1}{2} \}$. The vector field $v$ in condition $(c)$ of Definition \ref{tropical_cycle} is parallel to edges of type $\{v_j \} \times [0,1]$, where $v_j$ is a vertex of $L$. Notice that the singular points of $L$ generate edges of $\Delta$ which intersect $S$ transversely in the interior, giving points as in conditions $(b)$ and $(i)$ of Definition \ref{tropical_cycle}. We have the following

\begin{thm} \label{resolve_-ve_nodes}
The negative nodes $p_1, \ldots, p_k$ in a configuration as above can be tropically resolved.
\end{thm}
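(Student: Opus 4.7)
The plan is to construct a simultaneous tropical smoothing of the mirror positive nodes on $\check B$, and then obtain the resolution of the original negative nodes via discrete Legendre transform, exactly as in the proofs of Theorems \ref{resolve_nodes_1} and \ref{resolve_nodes_2}. Each negative node $p_j\in F_j\subset B$ corresponds, in $\check B$, to a positive node on an edge $\check F_j$ dual to the face $F_j$. The edges $\check F_1,\ldots,\check F_k$ form a closed combinatorial ring in $\check{\mathcal P}$, with consecutive edges $\check F_{j-1}$ and $\check F_j$ sharing the vertex dual to the maximal polytope of $B$ containing both $F_{j-1}$ and $F_j$. I would begin by producing a refinement $\check{\mathcal P}'$ of $\check{\mathcal P}$: for each $j$, subdivide $\check F_j$ by an interior integral midpoint $q_j$ (rescaling $\check B$ uniformly if required), and extend the subdivision to the square $2$-faces containing $\check F_j$ and to the eight $3$-polytopes of $\check{\mathcal P}$ incident to $\check F_j$, following the local model of \S\ref{trop_res_-ve}.

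Next, I would equip each new vertex $q_j$ with the fan structure $\Sigma_{q_j}$ whose maximal cones are the eight octants of $\R^3$, identifying them with the tangent wedges of the eight incident $3$-polytopes in the canonical way. The colinearity Assumption \ref{negative:assump}(a) is what allows these fan structures to glue consistently along the edges connecting $q_j$ to the shared vertices of $\check F_{j-1}$ and $\check F_j$, while Assumption (b) guarantees that the only additional $1$-cone entering the picture at each $v_j$ comes from $\gamma_j$, so the refinement around the ring is uniquely determined by the local model. Outside a neighborhood of $\bigcup_j\check F_j$ the polyhedral structure is unchanged.

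Finally, I would build a strictly convex MPL function via the recipe of Theorem \ref{resolve_nodes_1}. Namely, I introduce a multivalued piecewise linear function $\tilde\phi$, supported in a neighborhood of $\bigcup_j\check F_j$, whose values on the primitive generators of the $1$-dimensional cones of each $\Sigma_{q_j}$ and of the adjacent fans are chosen so that (i) $\tilde\phi$ is globally well-defined, i.e.\ its quotient functions along every new edge match at the two endpoints, and (ii) $h_{\tilde\phi}(F)>0$ on every $2$-dimensional cone $F$ produced by the subdivision. Setting $\check\phi':=N\check\phi+\tilde\phi$, one has $h_{\check\phi'}(F)=Nh_{\check\phi}(F)+h_{\tilde\phi}(F)$, which is positive on every $2$-cone of the refined fan for $N$ sufficiently large, since $h_{\check\phi}(F)\ge 0$ with equality only on the newly inserted cones. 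This yields a polarized tropical conifold $(\check B,\check{\mathcal P}',\check\phi')$ realizing the simultaneous smoothing of the mirror positive nodes, and its discrete Legendre transform $(B',\mathcal P',\phi')$ is the desired simultaneous tropical resolution of $p_1,\ldots,p_k$.

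The main obstacle is the global consistency of $\tilde\phi$ around the ring $\bigcup_j\check F_j$: because the new vertices $q_j$ close up into a cycle, the local integer data defining $\tilde\phi$ must satisfy a global closing condition at each shared vertex and along each connecting edge. Assumption \ref{negative:assump}(a) is designed precisely so that this condition holds, playing a role analogous to the Delzant property of $L$ in the proof of Theorem \ref{resolve_nodes_1}; without it one would generically meet a nontrivial obstruction, mirroring the good-relation obstructions of \S\ref{trop_relation}.
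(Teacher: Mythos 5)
Your high-level strategy is the paper's: smooth the mirror positive nodes by refining $\check{\mathcal P}$, adding fan structures at new vertices, taking $\check\phi'=N\check\phi+\tilde\phi$, and then applying the discrete Legendre transform. But the execution rests on a misreading of the mirror combinatorics, and this is where the proof actually lives. The edges $\check F_j=\check\lambda_j$ dual to the faces $F_j=\lambda_j\times[0,1]$ do \emph{not} form a closed ring of edges sharing vertices: the two endpoints of $\check\lambda_j$ are $\check C_j$ (dual to $C_j\times[0,1]$) and a vertex dual to the external maximal cell, and consecutive edges $\check\lambda_{j-1}$, $\check\lambda_j$ meet no common vertex. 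What actually happens is that $\check\lambda_{j-1}$ and $\check\lambda_j$ are \emph{parallel edges of the common $2$-face $\check v_j$} (this is exactly what Assumption \ref{negative:assump}(a) buys), while the vertices $\check C_j$ are joined by the edges $\check\gamma_{j+1}$ into a cycle bounding a disc $\check L$. Consequently the refinement cannot be a union of independent local models around each node glued along shared vertices: subdividing $\check\lambda_j$ at the new point forces a subdivision of $\check v_j^{\pm}$ and $\check v_{j+1}^{\pm}$ simultaneously, so the new polytopes (the $W_j^{\pm}=\conv(t_{j-1},t_j,\check C_{j-1},\check C_j,q^{\pm}_{j-1},q^{\pm}_j)$ of the paper) each involve the data of two consecutive nodes, and the subdivision then propagates through the edges $\check C^{\pm}$ into the polytopes $\check v^{\pm}$ dual to the \emph{interior} vertices of $L$. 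Your refinement never touches the interior of $\check L$ and so is not a polyhedral decomposition.

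Two further steps you declare "canonical" are precisely the nontrivial ones. First, the fan structure at the new vertex is not the eight octants identified "in the obvious way": there is no canonical identification (the two inequivalent choices correspond to the two small resolutions of the mirror node), and compatibility with the fans at $\check C_j$ forces the cones to be written in terms of the integers $(a_j,b_j)$ recording the directions of $\check\gamma_j$, $\check\gamma_{j+1}$; it is this choice that makes one monodromy loop trivial and the other generic-singular, i.e.\ that actually separates the discriminant and smooths the node. Second, the naive assignment of values for $\tilde\phi$ produces the non-integral slopes $-b_j/a_j$ on some generators, so one must scale by a common multiple $d$ of the $a_j$'s to get an integral MPL function -- an issue invisible in your sketch because you never write down $\tilde\phi$. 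So while the architecture of your argument is right, the claimed "global closing condition" is not where the difficulty sits, and the construction as proposed would not produce a well-defined tropical manifold.
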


\begin{proof}
The proof is similar to Theorem \ref{resolve_nodes_1}. Since we want to use mirror symmetry to resolve the nodes we start by describing the mirror. Observe that, if $P \in \mathcal P_L$ is any $j$-dimensional polytope of $L$ ($j=0,1,2$), then the mirror of the $(j+1)$-polytope $P \times [0,1]$ of $B$ will be a $(2-j)$-polytope $\check P$, inside $\check B$. 
All the polytopes $\check P$ are coplanar. We also have the polytopes $P \times \{1 \}$ and $P \times \{ 0\}$ whose mirrors we denote by $\check P^+$ and $\check P^-$ and their dimension is $3-j$. Clearly $\check P^+ \cap \check P^- = \check P$.
To indicate polytopes of $L$ we will use greek letters for edges ($\lambda, \delta, \gamma$...), lower case for vertices ($v, c, p, \ldots$) and upper case ($C, F, N, \ldots$) for $2$-faces. Every boundary edge $\lambda_j$ of $L$ belongs to a $2$-face $C_j$ of $L$. Moreover, it follows from point $(b)$ of Assumption \ref{negative:assump} that $C_j$ intersects $C_{j+1}$ in $\gamma_{j+1}$. Clearly the edge $\check \gamma_{j+1}$ joins the vertex $\check C_j$ to the vertex $\check C_{j+1}$. All edges $\check \gamma_j$ enclose a region, homeomorphic to a disc, which we denote by $\check L$.  Notice that if $P$ is a $2$-face of $L$, then $\check P^+$ and $\check P^-$ are edges emanating from $\check P$ which lie on a common line passing through $\check P$ and transversal to $\check L$. Moreover the edges of type $\check P^+$ (or $\check P^-$), with $\check P$ a vertex of $\check L$, are pairwise parallel with respect to parallel transport inside $\check L$. 

Given a boundary vertex $v_j$ of $L$, $\check v_j$ is a $2$-face which contains $\check \lambda_{j-1}$, $\check \gamma_j $ and $\check \lambda_{j}$ as bounding edges. Moreover $\check \lambda_{j} = \check v_j \cap \check v_{j+1}$. Point $(a)$ of Assumption \ref{negative:assump} implies that $\check \lambda_{j-1}$ and $\check \lambda_{j}$ are parallel edges of $\check v_j$. Notice that $\check \lambda_j$ is mirror to the face $F_j$, therefore the barycenter of $\check \lambda_j$ is a positive node mirror to the node $p_j$. Moreover the line inside $\check v_j$ going from the barycenter of $\check \lambda_{j-1}$ to the barycenter of $\check v_j$ and then to the barycenter of $\check \lambda_{j}$ is part of the discriminant locus $\Delta$. 

In order to resolve the nodes $p_1, \ldots, p_k$, we first define a new decomposition on $(\check B, \check{\mathcal P}, \check \phi)$ as follows. There is an open neighborhood $U$ of the region $\check L$ such that $U \cap \Delta$ consists only of small intervals containing the points $\check L \cap \Delta$. We can rescale $\check B$ so that all edges $\check \lambda_j$ and $\check C^{\pm}$, where $C$ is a $2$-face of $L$, contain at least one interior integral point inside $U$.  On each of the edges $\check \lambda_j$, $\check C_j^+$ and $\check C_j^-$ emanating from $\check C_j$ choose the integral points closest to $\check C_j$ and call them $t_j$, $q_j^+$ and $q_j^-$ respectively. 
Now subdivide $\check v_j^+$ in two convex polytopes, one being $W_j^+= \conv(t_{j-1}, t_{j}, \check C_{j-1}, \check C_{j}, q_{j-1}^+ , q_{j}^+)$ and the other one being the closure of its complement. Similarly subdivide $\check v_j^-$ and let $W_j^- = \conv(t_{j-1}, t_{j}, \check C_{j-1}, \check C_{j}, q_{j-1}^-, q_{j}^-)$. Given any interior vertex $v$ of $L$, we now give a subdivision of the $3$-polytope $\check v^+$. Notice that all vertices of $\check v$ are of the type $\check C$,  where $C$ is $2$-face of $L$ containing $v$. Let $q_C^+$ (resp. $q_C^-$) be the integral point on $\check C^+$ (resp. $\check C^-$) closest to $\check C$. Consider the polytope inside $\check v^+$ given by the convex hull of all points points $\check C$ and $q_C^+$ for all $2$-faces $C$ containing $v$. Denote it by $W_v^+$ and subdivide $\check v^+$ by $W_v^+$ and the closure of its complement. Similarly subdivide $\check v^-$. This gives a well defined decomposition of $\check B$. Notice that $W_v^+$, as a polytope, is just $\check v \times [0,1]$.

Now we need to define a fan structure at the new vertices of the subdivision. This definition must have the effect of smoothing the positive nodes. Although the fan structure at $\check C_j$ is unchanged, it is convenient to describe it.  Let $\{e_1, e_2, e_3 \}$ be an integral basis of $T_{\check C_j} \check B$ such that $e_3$ is tangent to $\check C_j^+$,  $e_2$ is tangent to $\check \lambda_j$ and $e_1$ is contained in the tangent wedge to $\check v_{j+1}$. There are pairs of integers $(a_j, b_j)$ and $(a_{j+1}, b_{j+1})$ such that the tangent wedges to $\check v_j^+$, $\check v_{j+1}^+$, $\check v_j^-$,  and $\check v_{j+1}^-$ correspond respectively to
\begin{eqnarray}
  \label{fan_cj} \cone( -a_je_1+b_je_2, e_2, e_3),  & \cone( a_{j+1}e_1+b_{j+1}e_2, e_2, e_3), &  \\
\nonumber \cone( -a_je_1+b_je_2, e_2, -e_3), & \cone( a_{j+1}e_1+b_{j+1}e_2, e_2, - e_3). &
\end{eqnarray}
Here  $ a_je_1+b_je_2$ and $-a_{j+1}e_1+b_{j+1}e_2$ are respectively the primitive generators of the cones corresponding to $\check \gamma_j$ and $\check \gamma_{j+1}$. Notice that $a_j$ and $a_{j+1}$ will be positive. We will not need the other cones of the fan structure at $\check C_j$.

Let us define the fan structure at $t_j$. There are eight maximal polytopes meeting at $t_j$: $W_j^+$, $W_{j+1}^+$, $W_j^-$, $W_{j+1}^-$ and their complements. Let  $\{f_1, f_2, f_3 \}$ be the standard basis of $\R^3$. We define the eight cones in the fan structure at $t_j$ to be 
\[ \cone( a_{j+1}f_1-b_{j+1}f_2, f_2, f_2 + f_3), \cone(-a_j f_1- b_j f_2,  f_2, f_2 + f_3), \]
 \[ \cone( a_{j+1}f_1-b_{j+1}f_2, f_2, - f_3),  \cone( -a_j f_1- b_j f_2,  f_2, - f_3), \]
\[ \cone( a_{j+1}f_1-b_{j+1}f_2, -f_2, f_2 + f_3), \cone( -a_j f_1- b_j f_2,  -f_2, f_2 + f_3), \]
 \[ \cone(  a_{j+1}f_1-b_{j+1}f_2, - f_2, - f_3),  \cone( -a_j f_1- b_j f_2,  - f_2, - f_3). \]
where the first four correspond respectively to the tangent wedges of $W_{j+1}^+$, $W_{j}^+$, $W_{j+1}^-$ and $W_{j}^-$. The other four to their complements. In the first cone, $f_2$ is tangent to the edge from $t_j$ to $\check C_j$ and therefore, by parallel transport inside $W^+_{j+1}$, $f_2$ is a parallel to $-e_2$. The primitive integral tangent vector to the edge from $t_j$ to $q_j^+$ corresponds, in the first cone, to $f_2 + f_3$. Notice that this implies that $f_3$, by parallel transport inside $W^+_{j+1}$, is parallel to $e_3$. The vector $a_{j+1}f_1-b_{j+1}f_2$ is tangent to the edge from $t_j$ to $t_{j+1}$ and therefore, by construction, it is parallel to $\check \gamma_{j+1}$ with respect to parallel transport inside $W_{j+1}^+$. Similarly $-a_j f_1- b_j f_2$, in the second cone, is tangent to the edge from $t_j$ to $t_{j-1}$ and it is therefore parallel to $\check \gamma_j$, with respect to parallel transport inside $W_{j}^+$. Notice the crucial fact that in the third cone $-f_3$ is tangent to the edge from $t_j$ to $q_j^-$ and therefore, by parallel transport inside $W_{j+1}^-$, it is parallel to $-e_3-e_2$.  This choice of fan structure guarantees that parallel transport along a loop going from $\check C_j$ to $t_j$ passing inside $W_{j+1}^+$ and then back to $\check C_j$ passing into $W_{j}^+$ is the identity. While moving along a loop going from $\check C_j$ to $t_j$ passing inside $W_{j+1}^+$ and then back to $\check C_j$ passing into $W_{j+1}^-$ gives the monodromy matrix
 \[
\left(
\begin{array}{ccc}
  1 & 0  & 0  \\
  0 &  1 &  1 \\
  0 &   0&   1
\end{array}
\right).
\]
computed with respect to the basis $\{ e_1, e_2, e_3 \}$. This corresponds to the smoothing of the positive node (compare with \S \ref{trop_res_-ve}, in particular with the second construction of the smoothing).

We now define the fan structure at the points $q^+_j$. The tangent wedges to $W_{j}^+$, $W_{j+1}^+$ are mapped respectively to the cones: 
\[ \cone( -a_jf_1+b_jf_2, f_2+f_3, f_3), \cone( a_{j+1}f_1+b_{j+1}f_2, f_2+ f_3, f_3). \]
Here $f_3$ is tangent to the edge from $q^+_j$ to $\check C_j$, $f_2+f_3$ is tangent to the edge $q^+_j$ to $t_j$. In the first cone, 
$-a_jf_1+b_jf_2$ is tangent to the edge from  $q^+_j$ to $q^+_{j-1}$. In the second cone $a_{j+1}f_1+b_{j+1}f_2$ is tangent to the edge from  $q^+_j$ to $q^+_{j+1}$. The complements of $W_{j}^+$ and $W_{j+1}^+$ inside $\check v_j^+$ and $\check v_{j+1}^+$ are mapped respectively to,
\[ \cone(-a_jf_1+b_jf_2, f_2+f_3, -f_3), \cone( a_{j+1}f_1+b_{j+1}f_2, f_2+ f_3, -f_3). \]
The other polytopes meeting in $q^+_j$ come from the subdivision of $\check v^+$, where $\check v$ is a $2$-face of $\check L$, containing the vertex $\check C_j$. Namely we have $W_v^+$ and its complement inside $\check v^+$. Suppose that in the fan structure at $\check C_j$, the tangent wedge of $\check v$ at $\check C_j$ is  $\cone(\alpha_1 e_1 + \alpha_2 e_2, \beta_1 e_1 + \beta_2 e_2)$. Then, at $q^+_j$, the tangent wedges of $W_v^+$ and its complement are mapped respectively to the cones 
\[ \cone( \alpha_1 f_1 + \alpha_2 f_2, \beta_1 f_1 + \beta_2 f_2, f_3 ), \cone( \alpha_1 f_1 + \alpha_2 f_2, \beta_1 f_1 + \beta_2 f_2, -f_3). \] This defines the fan structure at $q^+_j$.  Similarly we define the fan structure at $q^-_j$.
The fan structure at points of type $q^+_C$ or $q^-_C$, where $\check C$ is an interior vertex of $\check L$ is defined similarly and we leave its definition to the reader. It can be verified that these fan structures are compatible with the fan structures at all other points which are unchanged. Moreover this construction produces a smoothing of the positive nodes mirror to $p_1, \ldots, p_k$. In order to complete the resolution of $p_1, \ldots, p_k$ we need to find a suitable multivalued strictly convex piecewise linear $\check \phi'$. The strategy, as in the proof of Theorem \ref{resolve_nodes_1}, is to define a suitable $\tilde \phi$ which is ``supported in a neighborhood of $\check L$''. Then define $\check \phi'$ as in (\ref{new_function}) and prove that it is strictly convex for large $N$. Here is how we define $\tilde \phi$. Let $d$ be a positive integer. On the fan at $t_j$, let $\tilde \phi(f_2) = d$ and zero at all other primitive edges.  On the fan at $\check C_j$ let $\tilde \phi(e_2) = \tilde \phi(e_3) = -d$ and zero at all other primitive edges. On the fan at $q_j^+$ and $q_j^-$, let $\tilde \phi(f_3) = d$ and zero at all other primitive edges. We claim that for some choice of $d$, $\tilde \phi$ is a well defined multivalued piecewise linear function. Let us first explain how to find $d$. Notice that if we take $d=1$, then $\tilde \phi$ may not be integral. For instance on the first cone of the list (\ref{fan_cj}), we would get $\tilde \phi(e_1)= -b_j/a_j$ which may not be an integer. Similarly on the second cone $\tilde \phi(e_1)= b_{j+1}/a_{j+1}$. One can check that if we let $d$ be a common multiple of $a_j$ and $a_{j+1}$ for all $j=1, \ldots, k$, then $\tilde \phi$ will be integral. 

Verifying that $\tilde \phi$ is well defined is a tedious calculation similar to the one carried out in Theorem \ref{resolve_nodes_1} and we therefore omit it. Also the argument to show that the function $\check \phi'$ defined in (\ref{new_function}) is strictly convex for large $N$ is the same. 
\end{proof}

\section{Examples} \label{2exampls} 
We discuss some examples of compact tropical conifolds where various sets of nodes can be simultaneously resolved/smoothed. In the first example (and its mirror) we have slightly modified an example discussed in \S 4 of \cite{Gross_Batirev} so that it contains $9$ nodes. Resolving/smoothing subsets of these nodes produces new examples of compact tropical manifolds and their mirrors. We know that one of these is associated to a toric degeneration of a complete intersection in a toric Fano manifold (see discussion in \S \ref{global_res_sm}). We are not sure about the other ones. In the last paragraph we further generalize these examples. 

\subsection{First example...} \label{ex_one}
As the set $\mathcal P$ of polytopes we take $18$ copies of a triangular prism, i.e. of 
\begin{equation} \label{prismT}
 T =  \conv \{ (0,0,0), (4,0,0), (0,4,0), (0,0,4), (4,0,4), (0,4,4) \}. 
\end{equation}
We divide $\mathcal P$ in two families of nine prisms each and label prisms in each family by $\sigma_{jk}$ and $\tau_{jk}$ respectively, where $j,k$ are cyclic indices from $1$ to $3$. The vertices of $\sigma_{j-1,k-1}$ and $\tau_{j-1,k-1}$ are labeled like in Figure~\ref{prism1}.
 
\begin{figure}[!ht] 
\begin{center}
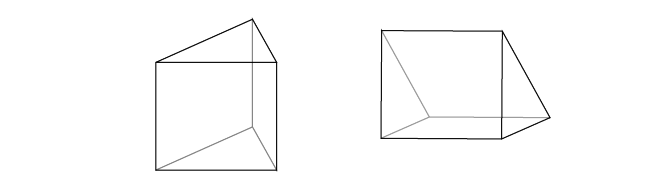
\caption{The two families of prisms: $\sigma_{j-1,k-1}$ on the left and $\tau_{j-1,k-1}$ on the right.}\label{prism1}
\end{center}
\end{figure}

Notice that some polytopes will have vertices with the same label. We glue polytopes along $2$-dimensional faces by matching vertices with the same label. For instance, Figure~\ref{prism3} shows the result of gluing $\tau_{11}$, $\tau_{21}$, $\tau_{31}$. The result of identifying polytopes with this rule gives two (polyhedral) solid tori, one from each family. Now glue the boundaries of these solid tori along $2$-dimensional faces by matching the vertex $Q_{jk}$ with the vertex $P_{jk}$, for every $j,k=1,2,3$. This gives the manifold $B$, homeomorphic to a $3$-sphere, and the polyhedral decomposition $\mathcal P$.
\begin{figure}[!ht] 
\begin{center}
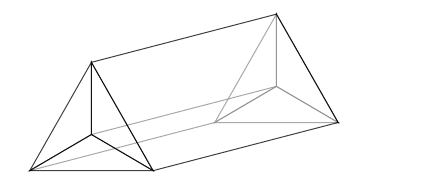
\caption{Assembling $\tau_{11}$, $\tau_{21}$ ,$\tau_{31}$.}\label{prism3}
\end{center}
\end{figure}
 We now describe the fan structure at vertices. There are two types of vertices, those labeled $P_{k,0}$ (or $Q_{0,k}$), lying in the interior of the solid tori, and those labeled $P_{j,k}$ (which are the same as $Q_{j,k}$) lying on the boundary of the solid tori. We take as representatives of each type, the vertices $P_{3,0}$ and $P_{3,1}$ respectively, and describe the fan structure there. Vertices of the same type will have the same fan structures, in the obvious sense. 

For the vertex $P_{3,0}$, take the fan corresponding to $\PP^2 \times \PP^1$, whose three dimensional cones are 
\[ \cone(e_1,e_2, e_3), \ \cone(e_1,-e_1-e_2, e_3), \ \cone(e_2,-e_1-e_2, e_3 ),\]
\[ \cone(e_1,e_2, -e_3), \ \cone (e_1,-e_1-e_2, -e_3), \  \cone(e_2,-e_1-e_2, -e_3). \]
The vertex $P_{3,0}$ belongs to six $3$-dimensional faces of $B$. Three of these are $\sigma_{11}$, $\sigma_{12}$, $\sigma_{13}$, which intersect on the edge from $P_{3,0}$ to $P_{2,0}$. Identify the tangent wedges at $P_{3,0}$ of these three polytopes with the first three cones, in such a way that the tangent direction to the edge from $P_{3,0}$ to $P_{2,0}$ is mapped to $e_3$. 
The other three $3$-dimensional faces containing $P_{3,0}$ are $\sigma_{21}, \sigma_{22}, \sigma_{23}$, which intersect on the edge from $P_{3,0}$ to $P_{1,0}$. Identify the tangent wedges at $P_{3,0}$ of these three polytopes with the last three cones, in such a way that the tangent direction to the edge from $P_{3,0}$ to $P_{1,0}$ is mapped to $-e_3$.

For the vertex $P_{3,1}$, take the fan corresponding to $\PP^1 \times \PP^1 \times \PP^1$, whose $3$-dimensional cones are the octants of $\R^3$.  Notice that there are eight $3$-dimensional faces containing $P_{3,1}$: $\sigma_{23}$, $\sigma_{22}$, $\sigma_{12}$, $\sigma_{13}$, $\tau_{23}$, $\tau_{22}$, $\tau_{12}$ and $\tau_{13}$. The fan structure at  $P_{3,1}$ identifies the tangent wedges of these eight polytopes respectively with: $\cone(-e_1, - e_2,  e_3)$, $\cone( e_1, - e_2,  e_3 )$, \ \ $\cone(e_1, e_2,  e_3)$, \ \ $\cone( -e_1, e_2, e_3 )$, \ \ $\cone(-e_1, - e_2,  -e_3)$, $\cone( e_1,  - e_2,  -e_3 )$,  \ $\cone(e_1, e_2,  -e_3)$ and $\cone( -e_1, e_2, -e_3 )$. This determines the fan structure at $P_{3,1}$ and similarly for each vertex of the same type.

\begin{figure}[!ht] 
\begin{center}
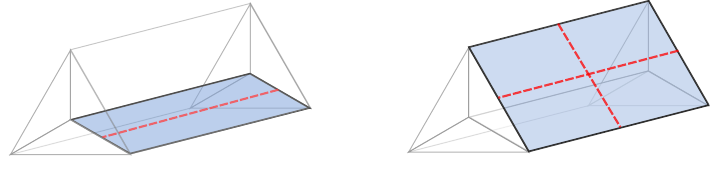
\caption{The discriminant locus $\Delta$ (dashed lines)}\label{monodromy}
\end{center}
\end{figure}

The discriminant locus $\Delta$, also depicted in Figure \ref{monodromy}, can be described as follows. Inside square faces with vertices $P_{j,0}, P_{j+1,0}, P_{j,k}$ and $P_{j+1,k}$ (or $Q_{0,k}, Q_{0,k+1}, Q_{j,k}$ and $Q_{j,k+1}$), $\Delta$ consists only of the segment joining the barycenter of the edge from $P_{j,0}$ to $P_{j,k}$  (resp. from $Q_{0,k}$ to $Q_{j,k}$) to the barycenter of the edge from $P_{j+1,0}$ to $P_{j+1,k}$ (resp. from $Q_{0,k+1}$ to $Q_{j,k+1}$), see Figure \ref{monodromy} (a). Monodromy around this component of $\Delta$ is given by the matrix 
\begin{equation} \label{mon1_ex} 
\left( \begin{array}{ccc}
1 & 3 & 0 \\ 
0 & 1 & 0 \\ 
0 & 0 & 1
\end{array} \right) 
\end{equation} 
Observe that all these segments together give six disjoint circles, three in the interior of each solid torus. Moreover each circle has multiplicity $3$, as can be seen from the monodromy. It can be shown that the structure can be modified slightly so that each circle splits into three, each one with the monodromy of generic-singularities (see for instance \S 4 of \cite{Gross_Batirev}). We will ignore this issue here.

Inside square faces with vertices $P_{j,k}$, $P_{j,k+1}$, $P_{j+1,k+1}$ and $P_{j+1,k}$, $\Delta$ has a quadrivalent vertex (see Figure \ref{monodromy} $(b)$). In fact it can be checked that this vertex is a negative node. Overall there are $9$ negative nodes. 
There are no other components of $\Delta$.

Finally we define a strictly convex piecewise linear function $\phi$ on $(B, \mathcal P)$. It is enough to specify the function on the fans associated to each vertex. At vertices of type $P_{k,0}$ (resp. $Q_{0,k}$) the fan is the fan of $\PP^2 \times \PP^1$. We define $\phi$ to have value $1$ at $-e_1 -e_2$ and at $e_3$, and zero at the remaining ones. At vertices of type $P_{j,k} = Q_{j,k}$, the fan is the fan of $\PP^1 \times \PP^1 \times \PP^1$. Here we take $\phi$ to have value $1$ at $-e_1$, $-e_2$ and $-e_3$ and zero at the remaining ones.  One can check that $(B, \mathcal P, \phi)$ is a well defined tropical conifold with $9$ negative nodes.

\subsection{...and its mirror} The \ discrete \ Legendre \ transform of the \ previous example gives \ its mirror \ \ $(\check B, \  \check{\mathcal P})$.  \ The polytopes \ dual to the vertices \ $P_{j,0}$ or \ $Q_{0,k}$ \ are six triangular prisms (i.e. {\small $\conv \{(0,0,0), (0,1,0), (1,0,0), (0,0,1), (0,1,1), (1,0,1)\}$}), which we label $\sigma_j$ and $\tau_k$ respectively. The polytopes dual to the other $9$ vertices are cubes (i.e. {\small $\conv \{(0,0,0), \- (0,1,0), (1,0,0), \- (1,1,0), \- (0,0,1), \- (1,0,1), \-  (0,1,1),  (1,1,1) \}$}). Let us denote the nine cubes by $\omega_{jk}$. We label the vertices of these polytopes by the letters $E_{jk}$ and $F_{jk}$ as in Figure~\ref{prism4}. 
\begin{figure}[!ht] 
\begin{center}
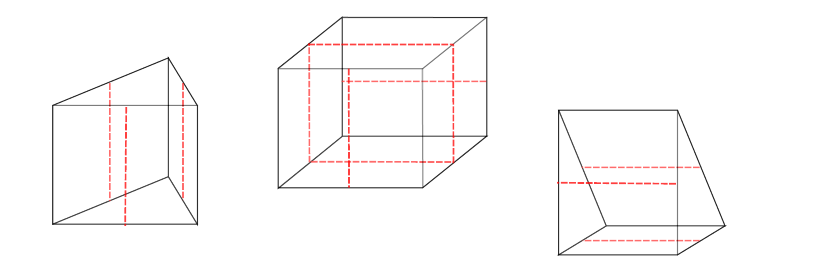
\caption{The polytopes $\sigma_j$ (left), $\omega_{jk}$ (center) and $\tau_k$ (right). The dashed red lines form the discriminant locus $\Delta$  }\label{prism4}
\end{center}
\end{figure}
The $2$-dimensional faces of these polytopes are glued by matching the vertices with the same labeling. Notice that by first gluing together $\sigma_1, \sigma_2$ and $\sigma_3$ on the one hand and $\tau_1, \tau_2$ and $\tau_3$ on the other, we obtain a pair of solid tori. Then, in Figure~\ref{prism4}, the front faces of the cubes $\omega_{jk}$ are glued to the square faces of $\sigma_j$ and the back faces are glued to the square faces of $\tau_k$. The final result is again a $3$-sphere. 

We now need to specify the fan structure at vertices. The fan at every vertex is the fan of $\PP^2 \times \PP^1$. In the fan structure at $E_{j,k}$ the primitive tangent vectors to the five edges going to $E_{j,k+1}$, to $E_{j,k-1}$, to $F_{j,k}$, to $E_{j+1,k}$ and to $E_{j-1,k}$ are mapped respectively to $e_1$, $e_2$, $-e_1-e_2$, $e_3$ and $-e_3$. Similarly (and symmetrically) we have the fan structure at the vertex $F_{k,j}$. By inspection one can see that the discriminant locus $\Delta$ is as depicted in Figure~\ref{prism4}. In fact monodromy around the edges of $\Delta$ contained in the union of the $\sigma_j$'s (or in the union of the $\tau_j$'s) is conjugate to (\ref{mon1_ex}). The remaining edges of $\Delta$, those which do not intersect the triangular prisms, have standard generic-singular monodromy. Notice that each edge going from $E_{j,k}$ to $F_{j,k}$ contains a positive node. So that $(\check B, \check{\mathcal P})$ has $9$ positive nodes, which are mirror to the $9$ negative ones in $(B, \mathcal P)$. We also have a strictly convex piecewise linear function $\check \phi$.

\subsection{Resolving/smoothing nodes} \label{global_res_sm}
We now apply the results of Section \ref{simult_res_sm} to simultaneously resolve and smooth certain sets of nodes in these two examples. We can cut each cube $\omega_{jk}$ with the unique plane passing through the four nodes. This gives us a tropical $2$-cycle $S$ containing the nodes. Thus every quadruple of nodes contained in a cube is in a configuration where we can apply Theorem~\ref{resolve_nodes_1}, thus it can be resolved. In fact any subset of the $9$ nodes which is a union of such quadruples can be resolved with the criteria of Section \ref{simult_res_sm}. For instance the six nodes inside two cubes sharing a common face (e.g. $\omega_{j,k}$ and $\omega_{j,k+1})$), are in a configuration like in Theorem~\ref{resolve_nodes_2}. Similarly the seven nodes inside a pair of cubes sharing an edge can be resolved using Corollary \ref{resolve_nodes_cor}. We can also resolve all $9$ nodes, by observing that we can find tropical $2$-cycles $S_1$, $S_2$ and $S_3$ which are the squares obtained by cutting the cubes $\omega_{11}$, $\omega_{22}$ and $\omega_{33}$ (with suitable choices of orientations and vector fields). The pairwise intersection of the $S_j$'s is just a node, therefore Corollary~\ref{resolve_nodes_cor} applies. The last case is given by $8$ nodes. For instance, take the $8$ nodes inside $\omega_{11}$, $\omega_{22}$ and $\omega_{23}$. Then $\omega_{11}$ shares one node with $\omega_{22}$ and one with $\omega_{23}$, forming configurations as in Corollary \ref{resolve_nodes_cor}. The nodes in $\omega_{22}$ and $\omega_{23}$ are as in Theorem \ref{resolve_nodes_2}. 

Let us now discuss the smoothing of the nodes in $\check B$. By mirror symmetry, smoothing nodes corresponds to resolving the mirror ones. So let us look at $(B, \mathcal P)$. Consider for instance the polytopes $\tau_{11}, \tau_{21}, \tau_{31}$ as in Figure \ref{prism3}. Then the three square faces of these polytopes which do not contain the vertices $Q_{0,2}$ and $Q_{0,3}$ have a negative node in their barycenter (see also Figure \ref{monodromy} (b)). 

\begin{figure}[!ht] 
\begin{center}
\includegraphics{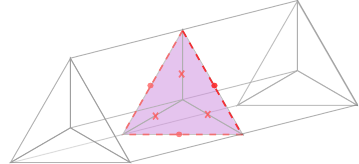}
\caption{The tropical $2$-cycle which relates $3$ nodes in $B$.}\label{sim_smooth}
\end{center}
\end{figure}

We now construct a tropical $2$-cycle containing these three nodes (see Figure \ref{sim_smooth}). Inside the triangular prism $T$, given by (\ref{prismT}), consider the triangle $T \cap \{ z=2\}$, obtained by cutting $T$ in the middle by a plane parallel to the triangular faces. Then define the tropical $2$-cycle $S$ as the union of the copies of this triangle contained in $\tau_{1k}, \tau_{2k}$ and $\tau_{3k}$. As the vector field $v$ of Definition \ref{tropical_cycle} we take the parallel transport along $S$ of the tangent vector to the edge from $Q_{0,2}$ to $Q_{0,3}$. Notice that $S \cap \Delta$ consists of the boundary of $S$ and three other points in the interior. It can be easily verified that the latter points satisfy condition $(i)$ of Definition \ref{tropical_cycle}. Also all other conditions of Definition \ref{tropical_cycle} are satisfied. It can be also verified that Theorem \ref{resolve_-ve_nodes} applies to this configuration. 
The same thing can of course be said about the three nodes in the union of $\sigma_{j1}, \sigma_{j2}$ and $\sigma_{j3}$.
Thus these configurations of nodes can be resolved and the corresponding mirror nodes in $\check B$ can be smoothed.

Inside $B$, let us consider the nine square faces containing the nine nodes. These form the boundary of the solid torus formed by the polytopes $\tau_{jk}$ (or $\sigma_{jk}$). 
Represent the boundary as a big square subdivided in nine small ones. Then Figure \ref{re_sm} represents the various possibilities we have of resolving and smoothing nodes in order to obtain a smooth tropical manifold. 
The families of nodes which can be resolved are those which lie on the horizontal  or vertical lines of the grid and these are represented by nodes which are circled (in blue). 
The nodes which are being smoothed are the ones whose faces are subdivided by (blue) diagonal lines. In fact these lines represent the subdivision given by Theorem \ref{resolve_nodes_2}. 
As we can see we have $4$ cases: we can resolve all nine nodes; smooth 4 and resolve 5; smooth 6 and resolve 3 or smooth all 9 nodes. Thus we have four different, smooth tropical manifolds. The Gross-Siebert reconstruction theorem gives four different Calabi-Yau manifolds. 
Let us denote them respectively by $X_0$, $X_4$, $X_6$ and $X_9$.  Now let us consider the mirror manifolds $\check X_0$, $\check X_4$, $\check X_6$ and $\check X_9$. 
These are obtained from $\check B$ respectively by smoothing all nine nodes; resolving $4$ and smoothing $5$; resolving $6$ and smoothing $3$ or resolving all $9$ nodes. We know that $\check X_0$ corresponds to an example of Schoen's Calabi-Yau \cite{Schoen}. This can be described as follows. Let $f_1: Y_1 \rightarrow \PP^1$ and $f_2: Y_2 \rightarrow \PP^1$ be two rational elliptic surfaces with a section such that for no point $x \in \PP^1$, $f_1^{-1}(x)$ and $f_2^{-1}(x)$ are both singular. Then Schoen's Calabi-Yau is the fibred product $Y_1 \times_{\PP^1} Y_2$. It was proved in \cite{schoen_ms} that a family of Calabi-Yaus of this sort can be represented as a complete intersection inside $\PP^1 \times \PP^2 \times \PP^2$ of hypersurfaces of tridegree $(1,3,0)$ and $(1,0,3)$. Later Gross showed in \cite{Gross_Batirev} that the associated tropical manifold is precisely $\check B$ with all nine nodes smoothed (see also \cite{Haase-ZharkovIII} for similar methods). So we expect that $\check X_0$ is homeomorphic to Schoen's Calabi-Yau (see  Theorem 0.1 in \cite{Gross_Batirev}). Thus we have $b_2(\check X_0) = 19$, $b_3(\check X_0) = 40$ and $\chi(\check X_0) = 0$. Now $\check X_4$, $\check X_6$ and $\check X_9$ are related to $\check X_0$ by a conifold transition at respectively $4$, $6$ and $9$ Lagrangian spheres in $\check X_0$. Therefore, applying (\ref{contran_top}), we obtain $\chi(\check X_4) = 8$, $\chi(\check X_6) = 12$ and $\chi(\check X_9) = 18$. 
\begin{figure}[!ht] 
\begin{center}
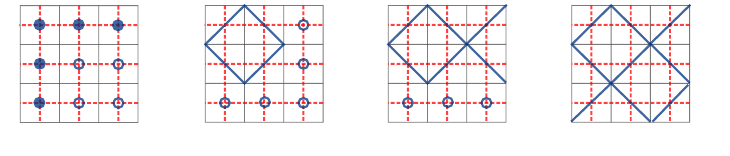
\caption{Smoothings and resolutions: the thicker (blue) lines are the subdivisions required by the smoothing. The (blue) circles indicate the nodes being resolved. The darker dots mark the nodes whose vanishing cycles in $\check X_0$ from a basis of the space spanned by the vanishing cycles of all $9$ nodes.} \label{re_sm}
\end{center}
\end{figure}
Applying the method described in \cite{Gross_Batirev}, we have computed that the tropical manifold obtained from $B$ by smoothing all nodes corresponds to a complete intersection in $\PP^3 \times \PP^3$ of polynomials of bidegree $(3,0)$, $(0,3)$ and $(1,1)$. Therefore we expect $X_9$ to be homeomorphic to such a manifold. We have $b_2(X_9) = 14$, $b_3(X_9) = 48$ (see also \cite{tian-lu}). Obviously its mirror $\check X_9$ has $b_2(\check X_9) = 23$ and $b_3(\check X_9) = 30$. Since $\check X_9$ is related to $\check X_0$ by a conifold transition at $9$ Lagrangian spheres in $\check X_0$,  from (\ref{contran_top}) we obtain $c=5$ and $d=4$. Therefore the vanishing cycles span a space of dimension $5$ in $H_3(\check X_0)$.  We believe, although we have not proved it, that five linearly independent Lagrangian spheres correspond to the nodes (in the mirror $\check B$) which are marked by dark dots in the grid of Figure \ref{re_sm}. Indeed, in the previous discussion, we observed that four nodes contained in a square (of the dashed grid in Figure \ref{re_sm}) are related by one relation. Observe that all $9$ vanishing cycles can be obtained from the given $5$ using these relations. In particular, in the case of the four nodes on a square, we expect $c=3$, $d=1$. In the case of six nodes on two adjacent squares we expect $c=4$, $d=2$, where the $4$ linearly independent spheres correspond to the dark dots in the first two rows of the grid in Figure \ref{re_sm}. 
Therefore, using (\ref{contran_top}), we conjecture that 
\[ b_2(\check X_4) = b_2(\check X_0) + 1 = 20, \ \ \ b_3(\check X_4) = b_3(\check X_0) - 6 = 34 \]
\[ b_2(\check X_6) = 21, \ \ \ \  b_3(\check X_6) = 32 \]
Moreover we also have the mirrors $X_0$, $X_4$, $X_6$ and $X_9$. 

It is likely that $\check X_4$, $\check X_6$ and $\check X_9$ can be obtained as conifold transitions by classical methods from the equations defining $\check X_0$. However we do not know if the corresponding tropical manifolds can be obtained from known toric degenerations, such as in toric Fano manifolds. Moreover we do not know if the mirrors have ever been computed by more standard methods. 

\subsection{More examples} \label{more_ex}
Here we generalize the above example.  For every pair of integers $(L, M)$, ranging from $3$ to $9$, we construct a tropical conifold as follows. Take $2LM$ copies of the triangular prism $T$ considered in (\ref{prismT}) and divide them in two families each containing $LM$ copies. We denote the two families by $\sigma_{jk}$ and $\tau_{jk}$ where $j,k$ are cyclic indices of order $L$ and $M$ respectively. Now, to form $B$ we do the same as above: we label the vertices of these prisms like in Figure \ref{prism1} and we glue the $2$-dimensional faces by matching the vertices with the same labels. Clearly, for $L=M=3$ we obtain the same as above. Observe that assembling $\tau_{11}, \tau_{21}, \ldots, \tau_{L1}$ with the above rule looks like the pictures represented in Figure \ref{triangles} multiplied by the interval $[0,4]$. Similarly we can say about $\sigma_{11}, \sigma_{12}, \ldots, \sigma_{1M}$. As above, the union of all the $\sigma_{jk}$'s on the one hand and of all the $\tau_{jk}$'s on the other gives two solid tori which are again glued together to form a $3$-sphere. 

\begin{figure}[!ht] 
\begin{center}
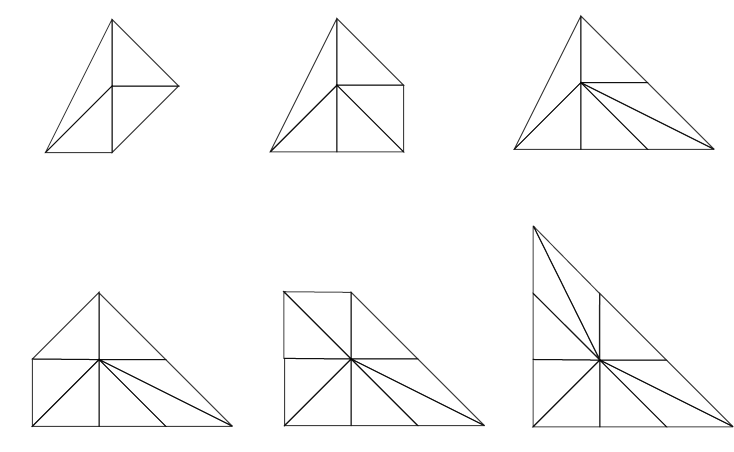
\caption{} \label{triangles}
\end{center}
\end{figure}

The fan structures at vertices are defined in a very similar way to the example above. In fact at points of type $P_{j,k}$, $k \neq 0$, the fan structure is exactly the same as in the example above, i.e. it is the fan of $\PP^1 \times \PP^1 \times \PP^1$. At a point of type $Q_{0,k}$ (or $P_{j,0}$) it is also like in the example above if $L=3$ (resp. if $M=3$). Otherwise we define it as follows. For integers $L=4,\ldots,9$, consider the fan $\Sigma_L$ in $\R^2$ which can be seen at the common point (i.e. the origin) of all simplices in Figure \ref{triangles}. The fan $\Sigma_{L+1}$ is the toric blow up of the fan $\Sigma_L$. Clearly $\Sigma_L$ has $L$ $2$-dimensional cones. Denote them in clockwise order by $C_1, \ldots, C_L$, where $C_1$ is the one generated by $\{ (-1,-1), (0,1) \}$. Then form the fan in $\R^3$ whose cones are $C_j^+ = C_j \times [0,+ \infty)$ and $C_j^- = C_j\times (-\infty,0]$. Now, the $3$-dimensional polytopes which contain the point $Q_{0,k}$ are $\tau_{j,k-1}$ and $\tau_{j, k-2}$, with $j=1, \ldots, L$. The fan structure at $Q_{0,k}$ identifies the tangent wedge of $\tau_{j,k-1}$ with $C_{j}^{+}$ and of $\tau_{j,k-2}$ with $C_{j}^{-}$. Similarly we can define the fan structure at points of type $P_{j,0}$ but with $M$ in place of $L$ and the roles of $j$ and $k$ inverted.
 Then we can also define a strictly convex piecewise linear function $\phi$, just by suitably choosing one on each of the two types of fans. This defines our tropical conifold $(B, \mathcal P, \phi)$, depending on the choice of integers $L,M = 3,\ldots, 9$. Notice that we cannot go beyond $9$ in this construction, because the polytopes in Figure \ref{triangles} would lose convexity and the tropical manifold would not be smooth in the sense of \S \ref{GSreconstruction}. Discrete Legendre transform gives the mirror $(\check B, \check{\mathcal P}, \check \phi)$. Notice that again $B$ has a negative node on every square face that does not contain a point of type $P_{j,0}$ or $Q_{0,k}$, i.e. square faces that are on the boundary of the solid tori. Therefore, there are $LM$ negative nodes. 

Notice that in  $(\check B, \check{\mathcal P}, \check \phi)$, the polytope mirror to a point $P_{j,k}$, $k \neq 0$, is a cube. Just as in the above example, this cube contains $4$ positive nodes which are related in the sense of Definition \ref{pos_nodes_rel} and can be simultaneously resolved using Theorem \ref{resolve_nodes_1}. The negative nodes in $B$ which are mirror to these $4$ nodes are those contained in the prisms $\sigma_{j-1,k-1}$, $\sigma_{j-1,k-2}$, $\sigma_{j-2,k-1}$, $\sigma_{j-2,k-2}$. These can be simultaneously smoothed.  Notice that for every fixed $j$ (or $k$) the $M$ (resp. $L$) nodes contained in the prisms $\sigma_{j,1}, \ldots, \sigma_{j,M}$ (resp. $\sigma_{1,k}, \ldots, \sigma_{L,k}$) are also related (see the example above), therefore they can be simultaneously resolved.  Depending on the choices of nodes to be simultaneously smoothed/resolved we get diagrams similar to those in Figure \ref{re_sm}, but on an $L \times M$ grid.  For instance, let us consider the case where $L=3$ and $M=4, \ldots, 9$ and in $B$ we smooth $2M$ nodes and resolve the remaining $M$. Denote by  $X_{2M}$ the corresponding Calabi-Yau. 
In Figure \ref{re_sm2} we have represented the diagrams for $L=3$ and $M=4, 5, 6$. We believe that by smoothing all nodes in $\check B$ we still get Schoen's Calabi-Yau. Therefore, with a similar argument as above, we conjecture that $b_2(X_{2M}) = 18-M$, $b_3(X_{2M})= 38+2M$.
 
\begin{figure}[!ht] 
\begin{center}
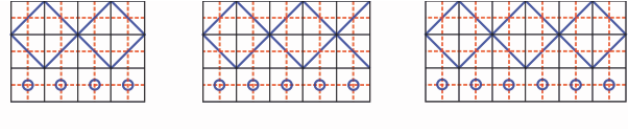
\caption{} \label{re_sm2}
\end{center}
\end{figure}

\subsection{Tropical cycles and good relations.} \label{example_good_rel}
Here we take a closer look at the structure of good relations among the vanishing cycles (or exceptional $\PP^1$'s) associated to the nodes in the above examples. Let $B$ be the tropical conifold constructed in \S \ref{more_ex} with $LM$ negative nodes, where $L$ and $M$ are fixed integers between $3$ and $9$. We can resolve all $LM$ nodes, obtaining the Calabi-Yau we denoted $X_0$ or smooth all nodes, obtaining $X_{LM}$. We also have the mirrors $\check X_0$ and $\check X_{LM}$, obtained from $\check B$, where $\check X_0$ is given by smoothing all (positive) nodes and $\check X_{LM}$ by resolving them. In the case $L=M=3$ we argued that the vanishing cycles in $\check X_0$ span a lattice of dimension $5$ in $H_3(\check X_0, \Z)$ and we conjectured that a basis of this lattice is given by the vanishing cycles associated to the nodes depicted in dark dots in Figure \ref{re_sm}. On the mirror side, it is the exceptional $\PP^1$'s of $X_0$ which span a lattice of rank $5$ in $H_2( X_0, \Z)$ and we expect this lattice to be generated by the $\PP^1$'s over the same dark nodes of Figure \ref{re_sm}. On the other hand, by formulas (\ref{contran_top}), the vanishing cycles in $X_9$ must span a lattice of dimension $4$ in $H_3(X_9, \Z)$. We conjecture that a basis for this lattice is given by the vanishing cycles over the complement of the dark nodes in Figure \ref{re_sm}. This is again reasonable, since all other nodes can be obtained from these four using the relation given by tropical $2$-cycles bounding horizontal and vertical lines in the grid (pictured in Figure \ref{sim_smooth}).

We now generalize to any pair $(L,M)$. Label the nodes in $\check B$ by $N_{j,k}$, where $j,k$ are cyclic indices of order $L$ and $M$ respectively and we assume they are displaced on the $L \times M$ grid so that $j$ denotes the row and $k$ the column.
 By slight abuse of notation, $N_{j,k}$ denotes also the homology class of the vanishing cycle in $\check X_0$ associated to the node. Let us denote by $S_{j,k}$ the tropical $2$-cycle in $\check B$ whose corners are the nodes $N_{j,k}$, $N_{j+1,k}$, $N_{j,k+1}$, $N_{j+1,k+1}$.
 We conjecture that the lattice spanned by the $N_{j,k}$'s in $H_3(\check X_0, \Z)$ has rank $L+M-1$ and that a basis is given by $\{ N_{1,1}, \ldots, N_{L,1}, N_{1,2}, \ldots, N_{1,M} \}$, i.e. by the first row and first column. 

To understand the good relations induced by the tropical $2$-cycles, we need to discuss orientations. 
Given the tropical cycle $S_{j,k}$, an orientation on the lifts $\tilde S_{j,k}$ or $\tilde S^*_{j,k}$ constructed in Theorem \ref{good_rel_pos} is given by the vector field $v$ and the choice of an orientation on $S_{j,k}$. 
Since all the $S_{j,k}$'s lie on a two dimensional submanifold of $\check B$ (a two torus), we can choose the orientation on the $S_{j,k}$'s to coincide with a fixed orientation of this submanifold. We also assume that all vector fields $v$ on the $S_{j,k}$'s point in the same direction when they meet at the nodes.  
This fixes the orientations on the lifts $\tilde S_{j,k}$ and $\tilde S^*_{j,k}$. Now consider a node $N_{j,k}$. It is a corner of $S_{j,k}$, $S_{j-1,k}$, $S_{j,k-1}$, $S_{j-1,k-1}$. One can prove that the orientation induced on the vanishing cycle at $N_{j,k}$ (resp. exceptional $\PP^1$) by $\tilde S_{j,k}$ (resp. $\tilde S^*_{j,k}$) is the same as the orientation induced by $\tilde S_{j-1,k-1}$ (resp.  $\tilde S^*_{j-1,k-1}$) and the opposite of the one induced by $\tilde S_{j-1,k}$ and $\tilde S_{j,k-1}$ (resp. $\tilde S^*_{j-1,k}$ and $\tilde S^*_{j,k-1}$). 
This is the motivation behind Definition \ref{coeff_eps}. In fact, if $p= N_{j,k}$, $S_1 = S_{j,k}$, $S_2 = S_{j-1,k-1}$ and $S_3 = S_{j-1,k}$, then $\epsilon_{S_1S_2}(p) = 1$, while $\epsilon_{S_1S_2}(p) = -1$ (see also Figure \ref{canc}).

On the vanishing cycle at $N_{j,k}$ choose the orientation induced by $S_{j,k}$. The above considerations imply that the good relation induced by $S_{j,k}$ is
\begin{equation} \label{ex_g_rel}
  N_{j,k} - N_{j+1,k}-N_{j, k+1}+N_{j+1,k+1} = 0
 \end{equation}
The same relation holds if $N_{j,k}$ denotes the class of the exceptional $\PP^1$ in $X_0$. 

\begin{rem} \label{lin_comb_rel} The relation induced by  $S_{j-1,k}$ is 
\[ N_{j-1,k} - N_{j,k}-N_{j-1, k+1}+N_{j,k+1} = 0. \]
If we let $S_1 = S_{j,k}$ and $S_2 = S_{j-1,k}$, then the nodes satisfying (\ref{rel_coef}) are precisely those which do not cancel when we sum the relation (\ref{ex_g_rel}) induced by $S_1$ and the relation induced by $S_2$. More generally, given tropical $2$-cycles $S_1, \ldots, S_r$, the nodes which do not cancel when we sum their corresponding relations are precisely the nodes which satisfy (\ref{rel_coef}).
\end{rem}

We have the following

\begin{prop} \label{grel_comb}
Any good relation among the vanishing cycles in $\check X_0$ (resp. exceptional $\PP^1$'s in $X_0$) is a linear combination of the good relations (\ref{ex_g_rel}). Therefore, in these examples, $\omega$-related implies related. 
\end{prop}

\begin{proof}
Let $V$ be the free abelian group generated by the $N_{j,k}$'s and $W$ the lattice they span inside $H_3(\check X_0, \Z)$. Then we have the natural map $\pi: V \rightarrow W$. Inside $V$ consider the elements
\begin{equation} \label{rjk}
\mathcal R_{j,k} = N_{j,k} - N_{j+1,k}-N_{j, k+1}+N_{j+1,k+1}. 
\end{equation}
Clearly $\mathcal R_{j,k} \in \ker \pi$ for all $j$ and $k$. It is enough to show that $\ker \pi$ is generated by the $\mathcal R_{j,k}$'s. It is easy to show that any element of $P \in V$ can be written as
\[ P = N + R \]
where $N$ is a linear combination of the elements $\{ N_{1,1}, \ldots, N_{L,1}, N_{1,2}, \ldots, N_{1,M} \}$ and $R$ is a linear combination of the $\mathcal R_{j,k}$'s. This can be first proved by induction when $P=N_{j,k}$ and then extended to any $P$ by linearity. If $P \in \ker \pi$, then $N =0$, since $\{ N_{1,1}, \ldots, N_{L,1}, N_{1,2}, \ldots, N_{1,M} \}$ forms a basis of $W$. The last statement follows from Remark \ref{lin_comb_rel}. \end{proof}

We also have

\begin{cor}
A set of vanishing cycles in $\check X_0$ satisfies a good relation if and only if the corresponding exceptional $\PP^1$'s in $X_0$ also satisfy a good relation. 
\end{cor}

\begin{rem} Notice that this corollary does not prove that $\omega$-related is equivalent to $\C$-related, as stated in item $(i)$ of Conjecture \ref{trop_cycle_conj}. In fact here we have chosen a fixed resolution of the conifold, one that comes from a resolution of $B$. In principle there could be other resolutions which change the topology. We do not know if this is true for this set of examples.  Notice that a good relation on the exceptional $\PP^1$'s which is a linear combination of relations induced from tropical $2$-cycles holds on all resolutions (the construction in Theorem \ref{good_rel_pos} is independent of the resolution). Therefore any additional good relation which might exist on some other resolution cannot be detected by tropical $2$-cycles. 
\end{rem}

Let us now return to the case $L=M=3$. In this case it is easy to classify all possible good relations. We look at the $3 \times 3$ grid of Figure \ref{re_sm}. It is periodic and it has some obvious symmetries. We have the following 

\begin{prop} \label{grel_class}
If $L=M=3$, any good relation among a set of $k$ vanishing cycles in $\check X_0$ is equivalent to one of the following (up to the symmetry of the grid):
\begin{equation}
 \begin{split}
& k=4: \ \ \ \  \mathcal R_{1,1} = 0, \\
& k=6: \ \ \ \ \mathcal R_{1,1}-\mathcal R_{1,2} = 0, \\
& k=6: \ \ \ \ \mathcal R_{1,1}- \mathcal R_{2,2} = 0, \\
& k=7: \ \ \ \ \mathcal R_{1,1}+ \mathcal R_{2,2} = 0, \\
& k=8: \ \ \ \ \mathcal R_{1,1}- \mathcal R_{1,2} + \mathcal R_{2,2} = 0,
 \end{split}
\end{equation}
where $\mathcal R_{j,k}$ is the relation (\ref{rjk}) induced by the tropical $2$-cycle $S_{j,k}$.
\end{prop}

\begin{proof} We sketch the proof, leaving the details to the reader. First, for every fixed $k=1, \ldots, 9$ one can classify all possible configurations of $k$-nodes up to periodicity and symmetries of the grid. 
For instance, there are only two configurations of $k=2$ nodes: they are either consecutive points on a diagonal or on a line (horizontal or vertical). There are four configurations of $k=3$ nodes: three corners on a square $S_{j,k}$,  three nodes on a line (horizontal or vertical), three nodes on the diagonal, the configuration $\{ N_{1,1}, N_{1,2}, N_{2,3} \}$. Similarly for all other $k$'s.  
It is then easy to verify which ones of these configurations gives good relations. For instance, any configuration with $k=2$ or $k=3$ nodes gives linearly independent vanishing cycles. For $k=4$ the only configuration which does not give linearly independent vanishing cycles is when the nodes are the four corners of a square $S_{j,k}$. In this case we obtain the first good relation in the list. When $k=5$ there is only one configuration whose vanishing cycles are not linearly independent. It is equivalent to $\{ N_{1,1}, N_{1,2}, N_{2,1}, N_{2,2}, N_{3,3} \}$. The first three span a lattice not containing $N_{3,3}$, so there cannot be a good relation. Proceeding this way one obtains only the list above. 
\end{proof}

We have already discussed how the first, second, fourth and fifth case in the above list can be resolved using the methods of Section \ref{simult_res_sm}. We have not been able to resolve the third case. The more general case when $L$ or $M$ is greater than $3$ is more complicated. We have not yet attempted a thorough classification.

\medskip

A similar analysis can be done for the vanishing cycles on $X_0$. Label them by $E_{j,k}$. We expect them to span a lattice of rank $(L-1)(M-1)$ in $H_3(X_{0}, \Z)$. We conjecture that a basis of this lattice is given by the $E_{j,k}$'s with $j \in \{1, \ldots, L-1 \}$ and $k \in \{ 1, \ldots, M-1\}$. We have tropical $2$-cycles as in Figure \ref{sim_smooth} which give the following relations 
\[ \mathcal R^{j} := \sum_{k=1}^{M} E_{j,k}= 0  \]
\[ \mathcal R_{k} :=  \sum_{j=1}^{L} E_{j,k}= 0 \]

Also in this case we can show that any good relation among the vanishing cycles in $X_0$ (or the exceptional $\PP^1$'s in $\check X_0$) is a linear combination of the above relations. The proof is the same as in Proposition \ref{grel_comb}. In the case $M=L=3$ we have the following classification of good relations 

\begin{prop}
If $L=M=3$, any good relation among a set of $k$ vanishing cycles in $X_0$ is equivalent to one of the following (up to the symmetry of the grid):
\begin{equation}
\begin{split}
& k=3 \ \ \ \ \mathcal R^j= 0, \\
& k=4 \ \ \ \ \mathcal R^k- \mathcal R_{j} = 0, \\
& k=5 \ \ \ \ \mathcal R^k+ \mathcal R_{j} = 0, \\
& k=5 \ \ \ \ \mathcal R^1+\mathcal R^2- \mathcal R_1=0, \\
& k=6 \ \ \ \ \mathcal R^1+ \mathcal R^2 = 0, \\
& k=6 \ \ \ \ \mathcal R^1-\mathcal R^3+ \mathcal R_1 - \mathcal R_3 = 0, \\
& k=6 \ \ \ \ \mathcal R^2- \mathcal R_2 - \mathcal R^3 = 0, \\
& k=7 \ \ \ \  \mathcal R^1+ \mathcal R^2 + \mathcal R_1= 0, \\
& k=7 \ \ \ \ 3 \mathcal R^1+ 2\mathcal R^2 + \mathcal R^3-  \mathcal R_2- 2 \mathcal R_3= 0, \\
& k=8 \ \ \ \ 3 \mathcal R^1+ 2\mathcal R^2 + 2\mathcal R^3-  \mathcal R_2- 2 \mathcal R_3= 0, \\
& k=9 \ \ \ \  \mathcal R^1+ \mathcal R^2 + \mathcal R^3 = 0,
\end{split}
\end{equation}
\end{prop}

The proof is just like in Proposition \ref{grel_class}. In this case, the only sets of nodes we can resolve using the methods of Section \ref{simult_res_sm} are the first, third, fifth and eighth case in the above list. We do not know if or how one can resolve the other cases.

\subsection*{Acknowledgments} This project was partially supported by NSF award DMS-0854989:FRG \emph{Mirror Symmetry and Tropical Geometry}, by MIUR (\emph{Geometria Differenziale e Analisi Globale}, PRIN07 and \emph{Moduli spaces and their applications}, FIRB 2012 ). This article started while the second author was at the Universit\`a del Piemonte Orientale, in Alessandria (Italy), where the first author was hosted various times, so we would like to thank this institution. The authors would like to thank Mark Gross and Bernd Siebert for useful discussions. We also thank the referee for suggesting to add the material in \S \ref{example_good_rel} and for helping us improve exposition.

\bibliographystyle{plain}
\bibliography{biblio}

\begin{flushleft}
\vspace{1cm}
Ricardo CASTA\~NO-BERNARD \\
Mathematics Department \\
Kansas State University \\
138 Cardwell Hall \\
Manhattan, KS 66506, U.S.A. \\
E-mail address: \email{rcastano@math.ksu.edu}

\vspace{1cm}
Diego MATESSI \\
Dipartimento di Matematica \\
Universit\`a degli Studi di Milano \\
Via Cesare Saldini 50 \\
I-20133 Milan, Italy \\
E-mail address: \email{diego.matessi@unimi.it}
\end{flushleft}

\end{document}